\newcommand{\cal}{\mathcal}
\renewcommand{\epsilon}{\varepsilon}
\newcommand{\newsection}[1]
{\subsection{#1}\setcounter{theorem}{0} \setcounter{equation}{0}
\par\noindent}
\newtheorem{theorem}{Theorem}
\newtheorem{lemma}[theorem]{Lemma}
\newtheorem{corr}[theorem]{Corollary}
\newtheorem{proposition}[theorem]{Proposition}
\newtheorem{deff}[theorem]{Definition}
\newtheorem{remark}[theorem]{Remark}
\newcommand{\bth}{\begin{theorem}}
\newcommand{\ble}{\begin{lemma}}
\newcommand{\bcor}{\begin{corr}}
\newcommand{\bdeff}{\begin{deff}}
\newcommand{\bprop}{\begin{proposition}}
\newcommand{\ele}{\end{lemma}}
\newcommand{\ecor}{\end{corr}}
\newcommand{\edeff}{\end{deff}}
\newcommand{\eprop}{\end{proposition}}
\newcommand{\Rn}{{\mathbb R}^n}
\newcommand{\la}{\lambda}
\newcommand{\e}{\varepsilon}
\newcommand{\supp}{\text{supp }}
\renewcommand{\Pi}{\varPi}
\renewcommand{\epsilon}{\varepsilon}
\newcommand{\R}{{\mathbb R}}
\newcommand{\1}{{\bf 1}}
\newcommand{\T}{{\mathbb T}}
\newcommand{\lqct}{L^{q_c}(\mathbb T^n)}
\newcommand{\lqc}{L^{q_c}}
\newcommand{\lqcm}{L^{q_c}(M)}
\newcommand{\qn}{Q_\nu}
\newcommand{\laq}{\la^{\frac1{q_c}}}
\newcommand{\off}{\Upsilon^{\text{off}}}
\newcommand{\diag}{\Upsilon^{\text{diag}}}
\newcommand{\far}{\Upsilon^{\text{far}}}
\newcommand{\smo}{\Upsilon^{\text{smooth}}}
\begin{document}

\title{Improved spectral projection estimates}
\thanks{The second author was supported in part by an AMS-Simons travel grant. The third author was supported in part by the NSF (DMS-1665373). }

\keywords{Eigenfunctions, spectral cluster estimates}
\subjclass[2010]{58J50, 35P15}

\author[]{Matthew D. Blair}
\address[M.D.B.]{Department of Mathematics, 
University of New Mexico, Albuquerque, NM 87131}
\email{blair@math.unm.edu} 
\author[]{Xiaoqi Huang}
\address[X.H.]{Department of Mathematics, University of Maryland, College Park. MD 20742}
\email{xhuang49@umd.edu}
\author[]{Christopher D. Sogge}
\address[C.D.S.]{Department of Mathematics,  Johns Hopkins University,
Baltimore, MD 21218}
\email{sogge@jhu.edu}

\begin{abstract}
We obtain new improved spectral projection estimates
on manifolds of non-positive curvature, including 
sharp ones for relatively large spectral windows for
general
tori.  Our results are stronger than those in an
earlier work of the first and third authors \cite{SBLog}, and the 
arguments have been greatly simplified.  We more directly make
use of pointwise estimates that are implicit in the
work of 
B\'erard~\cite{Berard} and avoid the use of weak-type
spaces that were used in the previous works \cite{SBLog} and \cite{Sogge2015improved}.  We also simplify and strengthen the 
bilinear arguments by exploiting the use of 
microlocal $L^2\to L^{q_c}$ Kakeya-Nikodym estimates
and avoiding the  of $L^2\to L^2$ ones as in earlier results.  This allows us to prove new results for manifolds of negative
curvature and some new sharp estimates for tori.  We also have new and improved techniques in two dimensions
for general manifolds of non-positive curvature.
\end{abstract}

\maketitle

\centerline{ \bf In memoriam: {\em Steve Zelditch (1953-2022)}}

\newsection{Introduction}

The purpose of this paper is to improve upon the spectral
projection estimates on compact manifolds of two
of us \cite{SBLog}, while at the same time simplifying
the arguments.  As in the previous work  we shall focus
on obtaining improved estimates for the critical
exponent
\begin{equation}\label{i.1}
q_c=\tfrac{2(n+1)}{n-1}
\end{equation}
on $n$-dimensional compact boundaryless Riemannian
manioflds $(M,g)$ all of whose sectional curvatures
are non-positive.  If we obtain improved $L^{q_c}$-estimates compared to the universal bounds of one of us
\cite{sogge88}, then, by interpolation and dyadic Sobolev estimates, we obtain
improvements for all other exponents $q>2$, including
the difficult range of subcritical exponents
considered in earlier papers by 
Bourgain~\cite{BourgainRestr}, Sogge~\cite{SoggeKaknik},
Sogge and Zelditch~\cite{SoggeZelditchL4} and
Blair and Sogge~\cite{BlairSoggeRefined}, \cite{blair2015refined}, \cite{BlairSoggeToponogov} and \cite{SBLog}.

We are able to obtain our improvements over the earlier
results in \cite{SBLog} by simplifying and strengthening
the two main steps in the proof of the spectral
projection estimates there.  As in \cite{SBLog}
and its predecessor \cite{Sogge2015improved}, we
shall adapt an argument of Bourgain~\cite{BourgainBesicovitch} for Fourier
restriction problems to obtain our results.  This
allows us to split our estimate into two ``heights''.
The greater ``height'' (which is easier to handle) corresponds to the size of
$L^2$-normalized zonal spherical harmonics on round
spheres $S^n$ which saturate the universal 
$L^q$-estimates of one of us \cite{sogge88} for exponents
$q\ge q_c$, while the lesser ``height'' (which is much
harder to handle) corresponds to the size of the
highest weight spherical harmonics on $S^n$ which
saturate the universal bounds for 
$2<q\le q_c$.  See also \cite{sogge86}.

We are able to obtain the needed estimates for
the larger heights in a much more efficient manner
than in the earlier works \cite{SBLog}
and \cite{Sogge2015improved}.  In these works we 
first obtained weak-type $L^{q_c}$-estimates using
the argument from \cite{BourgainBesicovitch} along
with pointwise estimates for smoothed out spectral projection
kernels that were obtained by the techniques
of B\'erard~\cite{Berard}.  We then ``upgraded'' the
weak-type $L^{q_c}$ estimates in the end to 
$L^{q_c}$-estimates using the Lorentz space improvements
of Bak and Seeger~\cite{BakSeegerST} for
``local'' spectral projection operators related to
the ``global'' ones we were considering.  We now
have a much more direct and simple argument based
on B\'erard-type estimates that completely avoids
the use of weak-type estimates and the corresponding
necessary interpolation argument involving the 
local estimates in \cite{BakSeegerST} that
leads to unnecessary losses.  Also, avoiding the use
of weak-type norms in the other main step which
treats the ``lower heights'' simplifies the arguments
there.

As in \cite{SBLog}, the second main step in the
proof of our improved spectral projection
estimates utilizes bilinear techniques of 
Lee~\cite{LeeBilinear}, Tao~\cite{TaoBilinear},
Tao, Vargas and Vega~\cite{TaoVargasVega}, 
Wolff~\cite{WolffCone} and others that were used
to study Fourier restriction problems in 
Euclidean space.  We are able  to make better
use of the bilinear techniques here by another more direct
argument, which, like in the other
main step in the proof, is based on a single
basic estimate and straightforward arguments
using ``local operators''.  The estimate that
we use for this is a ``microlocal Kakeya-Nikodym'' 
estimate going from $L^2(M)$ to $L^{q_c}(M)$ that
involves a microlocalized version of the
smoothed out spectral
projection operators that we ultimately want to control.
These microlocal Kakeya-Nikodym $L^2\to L^{q_c}$ 
estimates were also used by two of us in 
\cite{SBLog}; however, they were also paired with an
additional superfluous step that involved
$L^2\to L^2$ Kakeya-Nikodym estimates (such as in
\cite{BlairSoggeToponogov}) that arose in
all of the earlier works and was an  artifact
of the earliest works in this thread by
Bourgain~\cite{BourgainRestr}  and 
Sogge~\cite{SoggeKaknik}. 
Naturally, removing this unnecessary step involving
$L^2$--Kakeya-Nikodym bounds leads to 
improvements over earlier results. 
We are able to obtain further improvements in 
two-dimensions by tightening  the bilinear 
estimates in this case versus the ones in
this dimension in \cite{SBLog} and
earlier works.

Additionally, in some settings we are able to exploit
specific geometric features to obtain
improved $L^2\to L^{q_c}$ Kakeya-Nikodym estimates
compared to the ones in \cite{SBLog} for general
manifolds of non-positive curvatures.  These
lead to further improvements in certain settings.

When $M$ is a torus, we are able to exploit
favorable commutator properties for operators that
arise in the second step of the proof to obtain
new spectral projection estimates that we show to be
sharp.  These involve relatively large spectral
projection windows compared to those in the earlier
works of Bourgain and Demeter~\cite{BourgainDemeterDecouple}, 
Germain and Myerson~\cite{Germain} and
Hickman~\cite{Hickman} that allowed very
thin windows of width about
the spectral parameter $\la$ that go all the way to the
wavelengths $\la^{-1}$.  To be able to obtain
near sharp estimates for these very narrow spectral
windows these authors used decoupling which resulted
in modest $\la^\e$ losses, while, as we mentioned,
our new estimates are best possible for the
relatively large spectral scales we are able to handle with  bilinear
techniques.  

Also, for manifolds of strictly {\em negative} sectional
curvatures, we are able to exploit the rapid 
dispersive properties of wave kernels to obtain
improvements of the microlocal $L^2\to L^{q_c}$
Kakeya-Nikodym estimates compared to the ones we
can obtain for the general case of manifolds of
non-positive curvature.  By exploiting these, 
we are able, in the case of manifolds of negative curvature to obtain
estimates for spectral windows of width $(\log\la)^{-1}$ about $\la$
which are stronger in two-dimensions than the (sharp) estimates
that we obtain for ${\mathbb T}^2$ and, in 3-dimensions, that match up
with the (sharp) estimates for ${\mathbb T}^3$ for this spectral scale.
 Unfortunately,
unlike in the latter setting, we are unable to show that these
natural estimates are optimal and we discuss a problem
concerning geodesic concentration of log-quasimodes
on  manifolds manifolds of negative curvature that seems to be related to
recent work of Brooks~\cite{BrooksQM}
and Eswarathasan and Nonnenmacher~\cite{ENQM}.
\medskip

Let us now state our main results.  We shall consider smoothed out spectral projection
operators of the form
\begin{equation}\label{i.2}
\rho\bigl(T(\la-P)\bigr), \quad T=T(\la),
\end{equation}
with $P=\sqrt{-\Delta_g}$
where $\rho$ is a real-valued function satisfying
\begin{equation}\label{i.2.2}
\rho\in {\mathcal S}(\R), \quad 
\rho(0)=1, \quad \text{and }
\text{supp }\Hat \rho\subset \{|t|\in(\delta/2,\delta)\},
\end{equation}
where $0<\delta<1$
will be specified later.  For the most part, we shall
take 
\begin{equation}\label{i.3}
T= c_0\log\la,
\end{equation}
with $c_0>0$ ultimately chosen sufficiently small depending on $(M,g)$.  For tori, though, we shall take $T$ to be much larger.

Our main result which improves on estimates
in \cite{SBLog} then is the following

\begin{theorem}\label{thm1}
Assume that $(M,g)$ is an $n$-dimensional compact manifold with non-positive sectional curvatures.  Then, for $T$ as above,
\begin{equation}\label{i.4}
\|\rho(T(\la-P))f\|_{\lqcm} \lesssim \la^{\frac{1}{q_c}} \, \bigl(\log\la\bigr)^{-\sigma_n}\|f\|_{L^2(M)},
\end{equation}
where $\sigma_n=\tfrac{2}{(n+1)q_c}$.
\end{theorem}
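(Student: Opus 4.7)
The plan is to follow the two-heights approach described in the introduction, which adapts Bourgain's Fourier-restriction strategy, but using only the two simplified inputs emphasized in the paper: B\'erard-type pointwise kernel estimates for the ``large height'' contributions, and a microlocal $L^2 \to \lqc$ Kakeya--Nikodym estimate combined with bilinear arguments for the ``small height'' contributions. Writing $\chi_\la = \rho(T(\la-P))$ with $T = c_0 \log\la$, and normalizing $\|f\|_{L^2(M)} = 1$, I would use the distribution-function identity
\[
\|\chi_\la f\|_{\lqcm}^{q_c} \;=\; q_c \int_0^\infty \alpha^{q_c - 1}\, \bigl|\{x \in M : |\chi_\la f(x)| > \alpha\}\bigr|\, d\alpha
\]
and split at the threshold $\alpha_0 \sim \la^{1/q_c}(\log\la)^{-\sigma_n}$.

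\textbf{Large heights ($\alpha > \alpha_0$).} Here I would lift the kernel $K_\la(x,y)$ of $\chi_\la$ to the universal cover via the Hadamard parametrix, exploiting that $(M,g)$ having non-positive sectional curvature makes the cover diffeomorphic to $\mathbb{R}^n$ (Cartan--Hadamard) with at most exponential volume growth of geodesic balls. Summing the half-wave kernel over deck translates corresponding to times $|t| \in (\delta T/2, \delta T)$ with $c_0$ chosen small relative to the curvature, one arrives at the B\'erard-type pointwise bound $\|K_\la\|_{L^\infty} \lesssim \la^{(n-1)/2}/T$. Combining this with the standard $L^2$ orthogonality bound $\|K_\la(x,\cdot)\|_{L^2} \lesssim (\la^{(n-1)/2}/T)^{1/2} \cdot \la^{1/q_c}$, an $L^\infty \cap L^{q_c'} \to L^{q_c}$ argument bounds the part of $\chi_\la f$ where $|\chi_\la f| > \alpha_0$ directly, with no need for weak-type norms or Lorentz-space interpolation.

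\textbf{Small heights ($\alpha \le \alpha_0$).} Here I would microlocalize: decompose the unit cosphere bundle into caps $\{\theta_\nu\}$ of radius $\la^{-1/2}$ and write $\chi_\la = \sum_\nu \chi_\la^\nu$ via a pseudodifferential partition of unity symbolic on this scale. Applying a Whitney decomposition to the product, one gets, modulo diagonal terms handled by Sogge's universal bound,
\[
(\chi_\la f)^2 \;=\; \sum_{\text{dyadic } \mu \in [\la^{-1/2},1]} \;\; \sum_{\mathrm{dist}(\nu_1,\nu_2)\sim \mu}\, \chi_\la^{\nu_1} f \cdot \chi_\la^{\nu_2} f .
\]
For each separated pair I would apply a bilinear $L^{q_c/2}$ estimate of the form $\|\chi_\la^{\nu_1} f \cdot \chi_\la^{\nu_2} g\|_{L^{q_c/2}} \lesssim \mu^{\beta}\,\la^{2/q_c}\,\|f\|_2 \|g\|_2$, whose proof rests entirely on the microlocal $L^2 \to \lqc$ Kakeya--Nikodym estimate for the individual $\chi_\la^{\nu_i}$. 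This completely replaces the two-step argument in \cite{SBLog} that first went through an $L^2\to L^2$ Kakeya--Nikodym bound.

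\textbf{Main obstacle.} The heart of the matter is the microlocal $L^2 \to \lqc$ Kakeya--Nikodym estimate with the sharp $T$-dependence: it must give a gain of at least $T^{-1/2}$ relative to the Sogge bound on tubes of thickness $\la^{-1/2}$ around geodesic segments of length $T$. Granting this estimate, the remaining obstacle is combinatorial: summing the bilinear bounds over $\mu$-separated cap pairs and over $O(\log\log\la)$ scales $\mu$ must be arranged so that the cumulative loss does not absorb the $T^{-1/2}$ gain. The final exponent $\sigma_n = 2/((n+1)q_c)$ is dictated by interpolating the $T^{-1/2}$ gain against the baseline $\la^{1/q_c}$ Sogge bound at the critical exponent, weighted by the bilinear-to-linear exponent $2/q_c$ in the Whitney sum; any failure of sharpness in either the microlocal Kakeya--Nikodym bound or the orthogonality used in reassembly propagates directly into a loss in $\sigma_n$.
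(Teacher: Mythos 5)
Your overall architecture (a height splitting, B\'erard-type pointwise bounds for the large heights, microlocal Kakeya--Nikodym plus bilinear inputs for the small heights) matches the paper, but two of your choices create genuine gaps. First, the threshold. The paper splits $M$ into $A_\pm$ at height $\la^{\frac{n-1}4+\frac18}$, the size of the highest weight spherical harmonics, and this choice is forced: in the large-height step one tests $\|\tilde\rho_\la f\|_{L^{q_c}(A_+)}$ against a dual function, applies $TT^*$, and bounds the global part by $\|G_\la\|_{L^1\to L^\infty}\lesssim\la^{\frac{n-1}2+\frac18}$ times $\|\1_{A_+}\|_{q_c}^2$; Chebyshev converts $\|\1_{A_+}\|_{q_c}^2$ into $(\text{threshold})^{-2}\|\tilde\rho_\la f\|_{L^{q_c}(A_+)}^2$, and absorbing this into the left-hand side requires the threshold to exceed roughly $\la^{\frac{n-1}4+\frac1{16}}$. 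Your threshold $\alpha_0\sim\la^{1/q_c}(\log\la)^{-\sigma_n}$ is polynomially smaller (since $\tfrac1{q_c}=\tfrac{n-1}{2(n+1)}<\tfrac{n-1}4$), so the relevant factor $\la^{\frac{n-1}2+\frac18}\alpha_0^{-2}\approx\la^{\frac{(n-1)^2}{2(n+1)}+\frac18}$ blows up and the large-height estimate cannot close. Returning to the distribution-function identity also risks reintroducing exactly the weak-type bookkeeping the paper is at pains to avoid; it instead estimates $\|\tilde\rho_\la f\|_{L^{q_c}(A_\pm)}$ directly.

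Second, the bilinear step. A separated-cap bound $\|\chi_\la^{\nu_1}f\cdot\chi_\la^{\nu_2}g\|_{L^{q_c/2}}\lesssim\mu^\beta\la^{2/q_c}\|f\|_2\|g\|_2$ with $\beta>0$ does \emph{not} follow from the linear microlocal $L^2\to L^{q_c}$ Kakeya--Nikodym estimate: H\"older applied to the two factors gives no gain at all from the angular separation $\mu$. The transversality gain is supplied by Lee's bilinear oscillatory integral estimates (via parabolic rescaling), a separate and essential input, and even these give a power gain only at subcritical exponents $q\in(\tfrac{2(n+2)}n,q_c)$ --- at the critical exponent no such $\mu^\beta$ improvement is available. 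This is precisely why the paper needs the ceiling $|\tilde\rho_\la f|\le\la^{\frac{n-1}4+\frac18}$ on $A_-$: it writes $|\tilde\sigma_\la h|^{q_c}\lesssim|\tilde\sigma_\la h|^{q_c-q}\bigl(|\diag(h)|^{q/2}+|\far(h)|^{q/2}\bigr)$ and uses the $L^\infty(A_-)$ bound to upgrade the subcritical bilinear gain to the critical exponent. The Kakeya--Nikodym estimate enters elsewhere, namely to control the diagonal sum $\sum_\nu\|\tilde\sigma_\la Q_\nu h\|_{q_c}^{q_c}$ after an almost-orthogonality lemma, and under non-positive curvature it yields only a $T^{-1/q_c}$ gain, not the $T^{-1/2}$ you demand (that stronger gain is what the paper proves under strictly \emph{negative} curvature). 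The exponent $\sigma_n=\tfrac2{(n+1)q_c}$ then arises by interpolating this $\ell^\infty$ bound against the $\ell^2$ bound $\sum_\nu\|\tilde\sigma_\la Q_\nu h\|_{q_c}^2\lesssim\la^{2/q_c}$, i.e.\ $\sigma_n=\tfrac{q_c-2}{q_c}\cdot\tfrac1{q_c}$. Finally, caps of radius $\la^{-1/2}$ would place the cutoffs in the forbidden class $S^0_{1/2,1/2}$ and ruin the Egorov/commutator estimates such as \eqref{commute}; the paper works at the scale $\la^{-1/8}$ for exactly this reason.
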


This result improves the earlier results of two of the authors in \cite{SBLog} by obtaining a larger power of $\sigma_n$ in all dimensions.
For instance, in \cite{SBLog}, this exponent was $\tfrac{4}{3(n+1)^3}\ll \tfrac{2}{(n+1)q_c}=\tfrac{n-1}{(n+1)^2}$.  The proof of
Theorem~\ref{thm1} is also much simpler than earlier proofs, and, as we shall see leads to further improvements in certain geometries.

Let us note that if $\e(\la)=(\log\la)^{-1}$ then
\eqref{i.4} immediately implies bounds for the
spectral projection operators
\begin{equation}\label{i.5}
\chi_{[\la-\e(\la),\la+\e(\la)]}=
\1_{[\la-\e(\la), \, \la+\e(\la)]}(P),
\end{equation}
with $\1_I$ denoting the indicator function of the
interval $I$.  Indeed since $\rho(0)=1$, by a simple
orthogonality argument, Theorem~\ref{thm1} yields
the following.

\begin{corr}\label{cor1}
If $\chi_{[\la-\e(\la),\la+\e(\la)]}$ is as in
\eqref{i.5} and $(M,g)$ as above has non-positive
curvatures then for $\la\ge1$ and 
$\sigma_n=2/(n+1)q_c$
\begin{equation}\label{i.6}
\bigl\|\chi_{[\la-(\log\la)^{-1} , \, \la+ (\log\la)^{-1}]}f
\bigr\|_{L^{q_c}(M)}\le C\la^{\frac1{q_c}}
(\log\la)^{-\sigma_n}\|f\|_{L^2(M)}.
\end{equation}
\end{corr}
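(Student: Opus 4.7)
The plan is to deduce Corollary~\ref{cor1} from Theorem~\ref{thm1} by a spectral multiplier argument, which is the ``simple orthogonality argument'' alluded to in the excerpt. The key point is that the sharp spectral projector $\chi_{[\la-(\log\la)^{-1},\la+(\log\la)^{-1}]}$ can be realized as the composition of $\rho(T(\la-P))$ (for which we already have the strong bound of Theorem~\ref{thm1}) with an $L^2$-bounded spectral multiplier, after which the desired conclusion is immediate.

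First, since $\rho\in\mathcal{S}(\R)$ is continuous with $\rho(0)=1$, there exists $c>0$ such that $\rho(s)\ge 1/2$ for all $|s|\le c$. As the constant $c_0$ in the definition $T=c_0\log\la$ from \eqref{i.3} is at our disposal (it is to be ``chosen sufficiently small depending on $(M,g)$''), I would shrink it further if necessary so that $c_0\le c$. Then for every eigenvalue $\la_j$ of $P=\sqrt{-\Delta_g}$ lying in the window $[\la-(\log\la)^{-1},\la+(\log\la)^{-1}]$, I have
\[
|T(\la-\la_j)|\le c_0\log\la\cdot(\log\la)^{-1}=c_0\le c,
\]
so $\rho(T(\la-\la_j))\ge 1/2$ throughout this window.

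This lets me introduce the spectral multiplier
\[
m(\mu):=\frac{\mathbb{1}_{[\la-(\log\la)^{-1},\,\la+(\log\la)^{-1}]}(\mu)}{\rho(T(\la-\mu))},
\]
which satisfies $\|m\|_{L^\infty(\R)}\le 2$, so the associated operator $m(P)$ is bounded on $L^2(M)$ with norm at most $2$ by the functional calculus for $P$. All three operators are functions of $P$, so they commute and one has the operator identity
\[
\chi_{[\la-(\log\la)^{-1},\,\la+(\log\la)^{-1}]}=\rho(T(\la-P))\circ m(P).
\]
Applying Theorem~\ref{thm1} to $m(P)f$ in place of $f$ then yields
\[
\|\chi_{[\la-(\log\la)^{-1},\,\la+(\log\la)^{-1}]}f\|_{L^{q_c}(M)}\lesssim \la^{1/q_c}(\log\la)^{-\sigma_n}\|m(P)f\|_{L^2(M)}\lesssim \la^{1/q_c}(\log\la)^{-\sigma_n}\|f\|_{L^2(M)},
\]
which is exactly \eqref{i.6}. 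There is no genuine analytic obstacle at this stage; all the substantive work has been carried out in Theorem~\ref{thm1}, and the reduction only exploits that $\rho$ does not vanish on the short interval $[-c_0,c_0]$.
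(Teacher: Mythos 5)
Your proposal is correct and is precisely the ``simple orthogonality argument'' the paper invokes without writing out: one factors the sharp projector as $\rho(T(\la-P))\circ m(P)$ with $m$ a bounded spectral multiplier supported in the window, using that $\rho(0)=1$ and that $c_0$ may be taken small enough that $|T(\la-\la_j)|\le c_0$ keeps $\rho$ bounded below there. Nothing further is needed.
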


Recall that the universal bounds of one of
us \cite{sogge88} say that
\begin{equation}\label{i.7}
\|\chi_{[\la-1,\, \la+1]}\|_{L^2(M)\to L^q(M)}
\lesssim \la^{\mu(q)}, \quad 2<q\le \infty,
\end{equation}
with
\begin{equation}\label{i.8}
\mu(q)=\max\Bigl( n(\tfrac12-\tfrac1q)-\tfrac12, \,
\tfrac{n-1}2(\tfrac12-\tfrac1q) \Bigr).
\end{equation}
Since $\mu(q_c)=1/q_c$, \eqref{i.6} is a 
log-power improvement of the universal bounds on
manifolds of non-positive curvature.  We need to use
shrinking spectral windows in order to improve
the bounds \eqref{i.7} which are saturated on 
any manifold (see \cite{SFIOII}); however, for
$S^n$ there is no improvement of \eqref{i.6} even
if one uses $(\log\la)^{-1}$--spectral windows as in
\eqref{i.6}.  So the assumption of non-positive
curvature is needed to obtain the improved spectral
projection estimates in \eqref{i.6}.

We also note that, by interpolating with the trivial
$L^2\to L^2$ estimates for the spectral projection
operators, \eqref{i.6} yields
\begin{equation}\label{i.9}
\bigl\|\chi_{[\la-(\log\la)^{-1}, \, \la+ (\log\la)^{-1}]}f
\bigr\|_{L^q(M)}\le \Bigl(\, (\log\la)^{-\frac2{n+1}}
\la\Bigr)^{\mu(q)} \, \|f\|_{L^2(M)}, \quad 2<q\le q_c,
\end{equation}
with 
$\mu(q)=\tfrac{n-1}2(\tfrac12-\tfrac1q)$ as
in \eqref{i.8}.  By using dyadic Sobolev estimates
one can also obtain log-power improvements for
$q>q_c$; however, these would not be as strong
as the $(\log\la)^{-1/2}$ improvements over the universal
bounds for supercritical exponents of 
B\'erard~\cite{Berard} and Hassell and Tacy~\cite{HassellTacy}.

After we prove Theorem~\ref{thm1}, we shall see that
stronger estimates hold if one assumes all of the
sectional curvatures are negative, and that, in this
case the ones obtained for  three-dimensions
have numerology that is related to recent
toral spectral projection bounds of Hickman~\cite{Hickman} and Germain and Myerson~\cite{Germain},
while in two-dimensions are more favorable.  We shall also obtain
in all dimensions $n\ge2$ optimal spectral
projection estimates on 
$n$-dimensional tori for $\e(\la)$-windows
with $\e(\la)$ larger than a fixed negative power of
$\la$ which depends on the dimension.

Next, to motivate the two main steps in our proofs, let us recall the eigenfunctions on $S^n$ that saturate
the universal estimates \eqref{i.6}.  If we wish to
obtain improved bounds, such as those in \eqref{i.4}
or \eqref{i.6}, we need to develop techniques that will
be able to show that the types of extreme eigenfunctions
that exist on standard spheres cannot exist on manifolds
of non-positive curvature.

The eigenfunctions that saturate \eqref{i.7} on $S^n$
with eigenvalue $\la =\sqrt{k(k+n-1)}$ are the
zonal spherical harmonics of degree $k$, $Z_k$.  If
they are $L^2$-normalized and if $B_\pm(c\la^{-1})$
are geodesic balls of radius $c\la^{-1}$ about the
two poles on $S^n$ then
\begin{equation}\label{i.10}
|Z_k(x)|\approx \la^{\frac{n-1}2}, \quad
x\in B_\pm(c\la^{-1}),
\end{equation}
for some sufficiently small uniform constant $c>0$.
(See, e.g., \cite{sogge86}).  As a result, an
easy calculation shows that for some $c_0>0$
 we have the lower bounds
$$\|Z_k\|_{L^q(S^n)}\ge c_0\la^{n(\frac12-\frac1q)-\frac12}, \quad \text{if } \, \,
\la =\sqrt{k(k+n-1)}.
$$
This implies that \eqref{i.7} cannot be improved
when $M=S^n$  for the range of exponents for which
$\mu(q)=n(\tfrac12-\tfrac1q)-\tfrac12$, which
is $q\ge q_c$, where $q_c$ is as in \eqref{i.1}.

The other extreme eigenfunctions for the above
values of $\la$ are the highest weight spherical
harmonics of degree $k$, $Q_k$.   If 
$\gamma \subset S^n$ is the equator, and if ${\mathcal T}_{\la^{-1/2}}(\gamma)$ is a $\la^{-1/2}$-tubular neighborhood
of $\gamma$, then
$$\|Q_k\|_{L^2({\mathcal T}_{\la^{-1/2}}\, (\gamma))}
\approx 1 \quad
\text{if } \, \la =
\sqrt{k(k+n-1)} \quad \text{and } \, \,
\|Q_k\|_{L^2(S^n)}=1.$$
(See, e.g. \cite{sogge86}.)  Therefore, as we shall
show in \S\ref{g}, it is a simple exercise
using H\"older's inequality to verify that for
some uniform $c_0>0$ 
\begin{equation}\label{i.11}
\|Q_k\|_{L^q(S^n)}\ge c_0 \, \la^{\frac{n-1}2(\frac12
-\frac1q)}, \quad 2<q\le \infty.
\end{equation}
As a result, \eqref{i.7} cannot be improved for
$2<q\le q_c$ since $\mu(q)=\tfrac{n-1}2(\tfrac12
-\tfrac1q)$ for such exponents.  To further motivate
the splitting that we shall use in order to prove
Theorem~\ref{thm1}, we recall (see \cite{sogge86})
that the lower bounds in \eqref{i.11} are actually
upper bounds, and, in particular
\begin{equation}\label{i.12}
\|Q_k\|_{L^\infty(S^n)}\approx \la^{\frac{n-1}4},
\quad \text{if } \, \, \la=\sqrt{k(k+n-1)}.
\end{equation}

We note that both the zonal spherical harmonics, $Z_k$,
and the highest weight spherical harmonics, $Q_k$, saturate the 
universal bounds \eqref{i.7} when $q$ equals the critical exponent $q_c$ in \eqref{i.1}.  So estimates
involving this exponent are sensitive to both ``point
concentration'' and ``geodesic concentration''.  The
two main steps in the proof of Theorem~\ref{thm1} will
rule out these types of extreme concentration under
our curvature assumptions, and, based on the arguments
from \S\ref{g} and \eqref{i.12}, the ``height'' at which
the transition between these types of concentration
can occur should be $\la^{\frac{n-1}4}$.

\noindent{\bf Acknowledgements:}  We are deeply indebted 
to Steve Zelditch for the encouragement and generous
advice that he has given us over the years on this project and its predecessors.  In particular, his 
insistence that we should use microlocal estimates
rather than the original $L^2$ spatial Kakeya-Nikodym
estimates originally used by one of \cite{SoggeKaknik} was crucial
for the progress that we have made on these problems
over the years.  Moreover, the use
of microlocal estimates like \eqref{7} here has
strengthened our bounds and greatly simplified
the arguments.

We also are very grateful for the helpful suggestions of the referees which significantly improved
the exposition.

\newsection{The height splitting}

As in some of our earlier works, we shall employ
bilinear techniques that require us to compose
the ``global operators''
\begin{equation}\label{i.13}
\rho_\la =\rho(T(\la-P))
\end{equation}
in Theorem~\ref{thm1} with the ``local operators''
\begin{equation}\label{i.14}
\sigma_\la = \rho(\la-P).
\end{equation}

Also, as in earlier works, it will be convenient to localize a bit more.  To this end, let us write
\begin{equation}\label{i.b1}
I=\sum_{j=1}^N B_j(x,D),
\end{equation}
where each $B_j\in S^0_{1,0}(M)$ is a standard zero order pseudo-differential with
symbol supported in a small conic neighborhood of some $(x_j,\xi_j)\in S^*M$.
The size of the support will be described in \S4; however, we note now that
these operators will not depend on our spectral parameter $\la \gg 1$.  Also note that if
$\beta\in C^\infty_0((1/2,2))$ is a Littlewood-Paley bump function which equals
one near $t=1$ then the dyadic operators
\begin{equation}\label{i.b2}
B=B_{j,\la}=B_j\circ \beta(P/\la)
\end{equation}
are uniformly bounded on $L^p$, i.e.,
\begin{equation}\label{i.b3}
\|B\|_{L^p(M)\to L^p(M)}=O(1) \quad \text{for } \, \, 1\le p\le \infty.
\end{equation}
See e.g., \cite[Chapter II]{TaylorPDO}
for definitions of the above symbol classes, and the bounds in \eqref{i.b3} follow from
the results in \cite[Theorem 3.1.6]{SFIOII} if $1<p<\infty$ and the proof of this result
if $p=1$ or $p=\infty$.

We then shall further localize $\sigma_\la$ by setting
\begin{equation}\label{i.b4}
\tilde \sigma_\la = B\circ \sigma_\la,
\end{equation}
where $B$ is one of the $N$ operators in \eqref{i.b2}.
We also define 
the ``semi-global'' operators
\begin{equation}\label{i.15}
\tilde \rho_\la = \tilde \sigma_\la \circ \rho_\la.
\end{equation}

We note that the $\sigma_\la$ can be thought of as
``smoothed out'' versions of the operators in \eqref{i.7}.  They enjoy the same 
operator norms and the two sets of estimates are equivalent.
Similarly, it is an easy exercise using orthogonality
to see that we have the uniform bounds
\begin{multline}\label{tilde}
\|(I-\sigma_\la)\circ \rho_\la\|_{L^2(M)\to L^{q_c}(M)}
\le CT^{-1}\la^{\frac1{q_c}}, \, \, \text{if } \, 
T\ge 1, 
\\
 \text{and } \, \, \, \|\sigma_\la-\beta(P/\la)\circ \sigma_\la\|_{L^2(M)\to L^{q_c}(M)}=O(\la^{-N}), \, \, \, \forall \, N.
\end{multline}
Therefore by \eqref{i.b1} and \eqref{tilde}, in order to prove \eqref{i.4}, it suffices to
show that
\begin{equation}\label{0.1}
\|\tilde \rho_\la f\|_{L^{q_c}(M)}
\lesssim \la^{\frac1{q_c}}(\log\la)^{-\sigma_n}
\|f\|_{L^2(M)},
\end{equation}
if $T$ is as in \eqref{i.3}.

Due to the bilinear arguments that we shall employ, we need to make the height decomposition using
the semi-global operators
$\tilde \rho_\la$.  We shall always, as we may, assume
that the function $f$ in \eqref{0.1} is $L^2$-normalized, 
i.e.,
\begin{equation}\label{i.18}
\|f\|_{L^2(M)}=1.
\end{equation}
We shall then split our task \eqref{0.1} into estimating
the $L^{q_c}$-norm of $\tilde \rho_\la f$ over the two
regions,
\begin{equation}\label{i.19}
A_+=\{x\in M: \, |\tilde \rho_\la f(x)|\ge 
\la^{\frac{n-1}4+\frac18} \, \}
\end{equation}
and
\begin{equation}\label{i.20}
A_-=\{x\in M: \, |\tilde \rho_\la f(x)|< 
\la^{\frac{n-1}4+\frac18} \, \},
\end{equation}
which basically correspond to the height \eqref{i.12}
of the highest weight spherical harmonics.  There
is nothing special about the exponent $1/8$ in \eqref{i.19} and \eqref{i.20}.
Just as in \cite{SBLog}, it could be replaced by any
sufficiently small $\delta>0$ in what follows.

\newsection{Large height estimates}

Let us describe here how to estimate the $L^{q_c}(A_+)$
norm of $\tilde \rho_\la f$.
If $a\in C^\infty_0((-1,1))$ equals one on $(-1/2,1/2)$,
then we can do this using the ``global estimate''
\begin{multline}\label{2}
G_\la(x,y)=\frac1{2\pi} \int_{-\infty}^\infty \bigl(1-a(t)\bigr) \, T^{-1} \Hat\Psi(t/T) e^{it\la} \, \bigl(e^{-itP})(x,y)\, dt
\\
=O\bigl(\la^{\frac{n-1}2}\, \exp(C_0T)\Bigr), \, \, \, \Psi=\rho^2 \, \, \, 1\le T\lesssim \log\la.
\end{multline}

This estimate is valid when $(M,g)$ has nonpositive sectional curvatures 
(see e.g., \cite{Berard}, \cite{HassellTacy}, \cite{Sogge2015improved},
\cite{SoggeHangzhou}).  One proves \eqref{2}
by standard arguments after lifting the calculation
up to the universal cover.  Since $\Hat \Psi$ is compactly supported, the number of terms in the sum that 
arises grows exponentially in $T$ if $(M,g)$ has negative
curvature, and this accounts for the $\exp(C_0T)$ factor
in \eqref{2}. (See e.g., \cite[\S 3.6]{SoggeHangzhou} and the related arguments in \S\ref{neg} below). Later in \S\ref{torus} we shall be able 
to obtain more favorable estimates for tori than those
in Theorem~\ref{thm1} in part by using the fact that
in this setting, the number of terms grows polynomially.  Due
to this, for tori we shall use a slightly different splitting
into the two heights compared to \eqref{i.19}
and \eqref{i.20}.

Let us now see that if \eqref{2} is valid then we have the following.

\begin{proposition}\label{prop2}  Let $f$ and
$A_+$ be as in \eqref{i.18} and \eqref{i.19} and $B$ be as in \eqref{i.b2}.
Then for $\la \gg 1$, if $T=c_0\log\la$ with $c_0>0$ sufficiently small,
\begin{equation}\label{3}
\|\tilde \rho_\la f\|_{\lqc(A_+)}\le C\laq \, (\log\la)^{-\frac{1}{2}} \, \|f\|_{L^2(M)}.
\end{equation}
\end{proposition}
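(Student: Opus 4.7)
The plan is a $TT^\ast$/duality argument.  Setting $F=\tilde\rho_\la f$ and taking the dual function
$h=\bar F\,|F|^{\qc-2}\mathbf{1}_{A_+}$, one has $\|h\|_{L^{\qc'}(M)}=\|F\|_{\lqc(A_+)}^{\qc-1}$, together with
\begin{equation*}
\|F\|_{\lqc(A_+)}^{\qc}=\langle F,h\rangle=\langle f,\tilde\rho_\la^\ast h\rangle
\le \|f\|_{L^2(M)}\,\|\tilde\rho_\la^\ast h\|_{L^2(M)}.
\end{equation*}
After dividing by $\|F\|_{\lqc(A_+)}^{\qc-1}$, it suffices to establish
\begin{equation*}
\|\tilde\rho_\la^\ast h\|_{L^2(M)}^{\,2}
=\langle \tilde\sigma_\la\rho_\la^{\,2}\tilde\sigma_\la^\ast h,\,h\rangle
\le C\,\la^{2/\qc}\,T^{-1}\,\|h\|_{L^{\qc'}(M)}^{\,2},
\end{equation*}
which after taking square roots and feeding back into the display above would give the $\laq T^{-1/2}$ bound in \eqref{3}.

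To bound the right-hand side, I would split $\rho_\la^{\,2}=\Psi(T(\la-P))$ via the partition $1=a(t)+(1-a(t))$ inserted in its Fourier representation, obtaining a short-time piece $L_\la$ (the $a(t)$ cutoff) and a long-time piece $G^{\text{op}}_\la$ whose kernel is precisely the $G_\la$ of \eqref{2}.  The short-time piece inherits a $T^{-1}$ factor from the normalization of $\Psi(T(\la-P))$, so $\|L_\la\|_{L^2\to L^2}\le CT^{-1}$.  Combined with Sogge's universal bound $\|\tilde\sigma_\la\|_{L^2\to \lqc}\le C\laq$ (which is inherited from the corresponding bound for $\sigma_\la$ since $B$ is uniformly bounded on $\lqc$ by \eqref{i.b3}) and its dual $\|\tilde\sigma_\la^\ast\|_{L^{\qc'}\to L^2}\le C\laq$, one gets
\begin{equation*}
|\langle\tilde\sigma_\la L_\la\tilde\sigma_\la^\ast h,h\rangle|
\le \|\tilde\sigma_\la\|_{L^2\to \lqc}\,\|L_\la\|_{L^2\to L^2}\,\|\tilde\sigma_\la^\ast\|_{L^{\qc'}\to L^2}\,\|h\|_{L^{\qc'}}^{\,2}
\le C\,\la^{2/\qc}T^{-1}\,\|h\|_{L^{\qc'}}^{\,2},
\end{equation*}
which is exactly the target order of magnitude for the local part.

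For the long-time piece I would apply \eqref{2} as a pointwise kernel bound $|G_\la(u,v)|\le C\la^{(n-1)/2}\exp(C_0 T)$, so that
\begin{equation*}
|\langle\tilde\sigma_\la G^{\text{op}}_\la\tilde\sigma_\la^\ast h,h\rangle|
\le C\,\la^{(n-1)/2}\,\exp(C_0 T)\,\|\tilde\sigma_\la^\ast h\|_{L^1(M)}^{\,2}.
\end{equation*}
The factor $\|\tilde\sigma_\la^\ast h\|_{L^1}$ is then controlled by combining the kernel mapping properties of $\tilde\sigma_\la=B\circ\sigma_\la$ with the support of $h$ in $A_+$.  The set $A_+$ is small: applying Tchebyshev to the universal estimate $\|F\|_{\lqcm}\le C\laq\|f\|_{L^2}$ and the defining height $\la^{(n-1)/4+1/8}$ of $A_+$ yields $|A_+|\lesssim \la^{1-((n-1)/4+1/8)\qc}$, a negative power of $\la$.

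The main obstacle is making the global estimate close.  Since $T=c_0\log\la$ one has $\exp(C_0 T)=\la^{C_0 c_0}$, which is an arbitrarily small positive power of $\la$ if $c_0$ is taken small enough; the strategy is to use this slack, together with the smallness $|A_+|=O(\la^{-\alpha_n})$ and the kernel bounds on $\tilde\sigma_\la$, to show that the global contribution is $o(\la^{2/\qc}T^{-1})\|h\|_{L^{\qc'}}^{\,2}$ and hence absorbed into the local contribution.  Choosing $c_0$ small enough that $\la^{C_0 c_0}$ is dominated by any needed negative power of $\la$ coming from $|A_+|^{2/\qc}$, the two contributions sum to the required $C\la^{2/\qc}T^{-1}\|h\|_{L^{\qc'}}^{\,2}$, and \eqref{3} follows.
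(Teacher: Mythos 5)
Your argument is correct and is essentially the paper's: the same duality reduction, the same splitting of $\Psi(T(\la-P))$ into a local piece (with the $T^{-1}$ gain from the universal bounds) and a global piece controlled by the B\'erard-type kernel bound \eqref{2}, and the same exploitation of the height defining $A_+$, with $c_0$ chosen so that $\exp(C_0T)$ is a tiny power of $\la$. The one harmless variation is that you bound $|A_+|$ outright by Chebyshev applied to the universal estimate, whereas the paper bounds $\|\1_{A_+}\|_{q_c}$ by $\la^{-\frac{n-1}{4}-\frac18}\|\tilde\rho_\la f\|_{L^{q_c}(A_+)}$ and absorbs the resulting small multiple of the left-hand side; both versions close since $\la^{\frac{n-1}2+\frac18}\,|A_+|^{2/q_c}\lesssim \la^{2/q_c-1/8}\ll \la^{2/q_c}T^{-1}$.
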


\begin{proof}
We first note that, by \eqref{i.b3} and \eqref{tilde}, we have 
$$\| \tilde\rho_\la f\|_{L^{q_c}(A_+)}\le \|B\rho_\la f\|_{L^{q_c}(A_+)}+C\la^{\frac1{q_c}}/\log \la,$$
since we are assuming that $f$ is $L^2$-normalized.  Thus, we would have \eqref{3} if we could show that
\begin{equation}\label{3'}
\|B\rho_\la f\|_{L^{q_c}(A_+)}\le C\la^{\frac1{q_c}}(\log\la)^{-1/2}+\tfrac12 \|\tilde \rho_\la f\|_{L^{q_c}(A_+)}.
\end{equation}

To prove this we shall adapt an argument of Bourgain~\cite{BourgainBesicovitch} and simplify related ones in
\cite{Sogge2015improved} and \cite{SBLog}.  As mentioned before, we shall avoid using arguments that utilize the Lorentz space estimates of Bak and Seeger~\cite{BakSeegerST}, unlike in \cite{Sogge2015improved} and \cite{SBLog}.

To this end, choose $g$ such that 
$$\|g\|_{L^{q'_c}(A_+)}=1, \quad \text{and } \, \, \, 
\| B\rho_\la f\|_{L^{q_c}(A_+)} = \int  B\rho_\la f \, \cdot
\bigl(\1_{A_+} \cdot g\bigr) \, dx.$$
Then since $ \rho^*_\la= \rho_\la$ and $\Psi(T(\la-P))= \rho_\la \circ  \rho_\la$ for $\Psi$ as in \eqref{2},  by the Schwarz inequality, since we are assuming that $\|f\|_{2}=1$ we have
\begin{align}\label{4}
\|  B\rho_\la f\|^2_{L^{q_c}(A_+)}&=\Bigl(\, \int f \cdot \bigl(\rho_\la B^*\bigr)(\1_{A_+}\cdot g)(x) \, dx \, \Bigr)^2
\\
&\le \int \bigl| \, \rho_\la B^*\bigl(\1_{A_+}\cdot g\bigr)(x)\, \bigr|^2 \, dx \notag
\\
&=\int \bigl(B\circ\Psi(T(\la-P))\circ B^*\bigr)(\1_{A_+}\cdot g)(x) \, \cdot \, \overline{\1_{A_+}(x) \, g(x)} \, dx \notag
\\
&=\int \bigl(B\circ L_\la\circ B^*\bigr)(\1_{A_+}\cdot g)(x) \cdot \overline{\1_{A_+}(x) \, g(x)} \, dx \notag
\\
&\qquad \qquad + \int \bigl(B\circ G_\la \circ B^*\bigr)(\1_{A_+}\cdot g)(x) \cdot \overline{\1_{A_+}(x) \, g(x)} \, dx \notag
\\
&= I +II, \notag
\end{align}
with here, and in what follows $\|h\|_p=\|h\|_{L^p(M)}$.
Here, $G_\la$ is the operator whose kernel is as in \eqref{2}, while $L_\la$ is the ``local'' operator
$$L_\la =(2\pi T)^{-1} \int a(t) \Hat \Psi(t/T) e^{it\la} e^{-itP}\, dt.$$
Thus, $L_\la h=T^{-1}\sum_j m(\la;\la_j) E_jh$, where $E_j$ denotes the projection onto the $j$th eigenspace of $P$ and
the spectral multiplier $m(\la;\la_j)= (\check{a}*\Psi_T)(\la-\la_j)$, $\Psi_T(\tau)=T\Psi(T\tau)$,
 satisfies
$$m(\la;\la_j)=O((1+|\la-\la_j|)^{-N}), \quad T\ge1,$$
for any $N$.  Consequently, by the universal bounds in \cite{sogge88}, we have
$$\|L_\la\|_{L^{q_c'}(M) \to L^{q_c}(M)} \lesssim T^{-1}\la^{\frac{2}{q_c}}.$$
Since $T=c_0\log\la$, if we use H\"older's inequality and \eqref{i.b3} we find that
\begin{align}\label{5}
|I| &\le \bigl\|BL_\la B^*\bigr(\1_{A_+}\cdot g\bigr)\bigr\|_{q_c}\cdot\| \1_{A_+} \cdot g\|_{q_c'}
\\
&\lesssim \bigl\|L_\la B^*\bigr(\1_{A_+}\cdot g\bigr)\bigr\|_{q_c}\cdot\| \1_{A_+} \cdot g\|_{q_c'} \notag
\\
&\lesssim \la^{\frac{2}{q_c}}(\log\la)^{-1}  \bigl\| B^*\bigr(\1_{A_+}\cdot g\bigr)\bigr\|_{q_c'}\cdot\| \1_{A_+} \cdot g\|_{q_c'} \notag
\\
&\lesssim \la^{\frac{2}{q_c}}(\log\la)^{-1}\|g\|^2_{L^{q_c'}(A_+)} \notag
\\
&=\la^{\frac{2}{q_c}}(\log\la)^{-1}. \notag
\end{align}

To estimate $II$, we choose $c_0>0$ small enough
so that if $C_0$ is the constant in \eqref{2} then
$$\exp(C_0T)\le \la^{\frac{1}{8}} \quad \text{if } \, \, T=c_0\log\la \quad \text{and } \, \, \la \gg 1.$$
Consequently, \eqref{2} yields
$$\|G_\la\|_{L^1(M)\to L^\infty(M)}\le \la^{\frac{n-1}2+\frac18}.$$
As a result, since the dyadic operators $B$ are uniformly bounded on $L^1$ and $L^\infty$, we can
repeat the argument that we used to estimate $I$ to see that
\begin{equation*}
|II|\le C\la^{\frac{n-1}2}\la^{\frac18} \, \bigl\| \1_{A_+} \cdot g\bigr\|_1^2 \le C\la^{\frac{n-1}2}\la^{\frac18} \,
\|g\|^2_{L^{q_c'}(A_+)} \, \|\1_{A_+}\|_{q_c}^2
=C\la^{\frac{n-1}2}\la^{\frac18} \, \|\1_{A_+}\|_{q_c}^2.
\end{equation*}
If we recall the definition \eqref{i.19} of $A_+$, we can estimate the last factor:
$$\|\1_{A_+}\|_{q_c}^2 \le \bigl(\la^{\frac{n-1}4 +\frac18}\bigr)^{-2}\|\tilde \rho_\la f\|^2_{L^{q_c}(A_+)}.$$
Therefore,
$$|II|\lesssim \la^{-1/8}\| \tilde \rho_\la f\|^2_{L^{q_c}(A_+)} \le \bigl(\tfrac12\| \tilde \rho_\la f\|_{L^{q_c}(A_+)}\bigr)^2,
$$
assuming, as we may, that $\la$ is large enough.

If we combine this bound with the earlier one, \eqref{5}, for $I$ we conclude that
\eqref{3'} is valid
which of course yields \eqref{3} and completes the proof of Proposition~\ref{prop2}.
\end{proof}

\newsection{Controlling small heights using microlocal Kakeya-Nikodym estimates}

Note that, under our curvature assumption, Proposition~\ref{prop2} rules out the existence of eigenfunctions
like the zonal spherical harmonics $Z_k$ on $S^n$ that
maximally concentrate near points.  On the other hand,
given \eqref{i.12} it does not rule out the existence
of eigenfunctions like the highest weight spherical harmonics since the $Q_k$ vanish on $A_+$.
To complete the proof of Theorem~\ref{thm1} we also need the following result for the complementary set, $A_-$,
which does rule out the existence of these types of
eigenfunctions with maximal concentration near geodesics.

\begin{proposition}\label{prop3}  Let $M$ have nonpositive curvature.  Then
\begin{equation}\label{6}
\|\tilde \rho_\la f\|_{L^{q_c}(A_-)} \le C\la^{\frac1{q_c}} \, (\log\la)^{-\sigma_n} \, \|f\|_{L^2(M)},
\end{equation}
where $\sigma_n$ is as in Theorem~\ref{thm1}.
\end{proposition}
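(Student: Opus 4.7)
The plan is to square the $L^{q_c}$-norm, microlocalize into fine pieces at an angular scale $\theta=\theta(\la)$ to be optimized, decompose dyadically via Whitney, and extract the log improvement from a microlocal $L^2\to L^{q_c}$ Kakeya-Nikodym estimate valid under the curvature hypothesis. Starting from the partition \eqref{i.b1}, I would refine each $B_j$ into operators $\tilde B_\ell$ whose symbols live in conic sectors of angular width $\theta$ about unit covectors $\xi_\ell\in S^*M$, with $\theta\ge \la^{-1/2}$, and write $\tilde \rho_\la f=\sum_\ell \tilde B_\ell \rho_\la f$ modulo rapidly decaying errors. The height constraint in \eqref{i.20} will play the same role here that it did for $A_+$ in the proof of Proposition~\ref{prop2}: it allows one to absorb a factor of $\|\tilde\rho_\la f\|_{L^{q_c}(A_-)}$ coming from the bilinear side back into the left-hand side, reducing \eqref{6} to a bilinear estimate for $(\tilde\rho_\la f)^2$ in $L^{q_c/2}$.

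The core of the argument is the bilinear decomposition
\[
\bigl\|(\tilde\rho_\la f)^2\bigr\|_{L^{q_c/2}(A_-)}\le\sum_{\ell,m}\bigl\|(\tilde B_\ell \rho_\la f)(\tilde B_m \rho_\la f)\bigr\|_{L^{q_c/2}(A_-)},
\]
with pairs $(\ell,m)$ split by a Whitney decomposition into near-diagonal pairs $|\xi_\ell-\xi_m|\lesssim\theta$ and well-separated pairs at dyadic angular scales $2^k\theta$. For near-diagonal pairs, Cauchy-Schwarz and the $\ell^2$-orthogonality of the microlocal decomposition together with the single-piece microlocal $L^2\to L^{q_c}$ Kakeya-Nikodym bound give the required control; under the non-positive curvature hypothesis, this bound improves the universal $\la^{1/q_c}$ estimate by a power of $\log\la$ on each sector of width $\theta$. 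For well-separated pairs, I would invoke the bilinear restriction estimate of Lee, Tao, Tao-Vargas-Vega, and Wolff at the critical bilinear exponent $q_c/2$, applied to microlocalized wave packets on each piece, and combine it with the same microlocal Kakeya-Nikodym bound on each factor. Summing the dyadic Whitney pieces and optimizing $\theta$ should recover a total bilinear bound of $\la^{2/q_c}(\log\la)^{-2\sigma_n}\|f\|_2^2$, which upon taking square roots yields \eqref{6}.

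The main obstacle is two-fold. First, the microlocal $L^2\to L^{q_c}$ Kakeya-Nikodym estimate on conic sectors of width $\theta$ must exhibit a log improvement proportional to the square of the exponent $\sigma_n$; obtaining this requires lifting the smoothed spectral projector $\tilde\sigma_\la\circ\rho_\la$ to the universal cover and invoking B\'erard-type wave-kernel pointwise bounds over the time scale $T=c_0\log\la$, now combined with the microlocal structure of $\tilde\sigma_\la$. Second, the Whitney summation must balance precisely so that the exponent $\sigma_n=2/((n+1)q_c)$ emerges; achieving the sharp exponent requires the bilinear restriction estimate to be used at exactly the critical exponent $q_c/2$, without the superfluous $L^2\to L^2$ Kakeya-Nikodym step that appeared in \cite{SBLog} and earlier works. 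Delicacies near the endpoint angular scale $\theta\sim\la^{-1/2}$, where the bilinear estimate degenerates, will have to be handled by choosing $\theta$ to be a sufficiently small negative power of $\la$ so that the Whitney sum closes without extra logarithmic losses.
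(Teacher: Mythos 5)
Your overall architecture---microlocalize into angular sectors, perform a Whitney decomposition of the pairs, use the $A_-$ ceiling to bridge exponents, and feed in a microlocal $L^2\to L^{q_c}$ Kakeya--Nikodym bound---matches the paper's. But the mechanism by which you propose to extract the logarithmic gain does not work, and it is the heart of the matter. You plan to apply the bilinear restriction estimate ``at exactly the critical bilinear exponent $q_c/2$'' to the well-separated pairs and to recover $(\log\la)^{-2\sigma_n}$ there by combining with the Kakeya--Nikodym bound on each factor. At $q=q_c$ the bilinear oscillatory integral estimates of Lee et al.\ give \emph{no} gain over the linear bound: in the paper's inequality \eqref{far} the improving factor is $(\la^{7/8})^{\frac{n-1}2(q-q_c)}$, which is identically $1$ at $q=q_c$, and the estimate is only used for $q\in(\tfrac{2(n+2)}n,q_c)$ \emph{strictly} subcritical. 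Moreover, if you try to salvage the critical-exponent route by Cauchy--Schwarz-ing each well-separated product into two single-sector $L^{q_c}$ norms and invoking the Kakeya--Nikodym bound on each, the subsequent sum over the roughly $\theta^{-2(n-1)}$ pairs destroys the gain; genuine bilinear orthogonality is exactly what is missing at the critical exponent. The paper's resolution is structurally different: the off-diagonal (far) part is estimated at subcritical $q$, where it gains a \emph{power} of $\la$, and the $L^\infty(A_-)$ ceiling $|\tilde\rho_\la f|<\la^{\frac{n-1}4+\frac18}$ is used to convert that subcritical bound into an $L^{q_c/2}(A_-)$ bound; no logarithm is needed or produced there.

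The logarithmic gain instead comes entirely from the \emph{diagonal} part, by a route absent from your outline. An almost-orthogonality lemma (Lemma 4.2 of \cite{SBLog}) bounds $\|\diag(h)\|_{L^{q_c/2}}$ by an $\ell^{2q^*}$ sum of the single-sector norms $\|\tilde\sigma_\la Q_\nu h\|_{q_c}$, and after absorbing a term into the left-hand side via Young's inequality one is left with controlling $(\sum_\nu\|\tilde\sigma_\la Q_\nu h\|_{q_c}^{q_c})^{1/q_c}$. This is interpolated as $\sup_\nu(\cdots)^{\frac{q_c-2}{q_c}}\cdot(\sum_\nu\|\cdots\|_{q_c}^2)^{\frac1{q_c}}$: the $\ell^2$ sum is $O(\la^{2/q_c})$ by orthogonality of the $Q_\nu$, and the sup is $O(\la^{1/q_c}(\log\la)^{-1/q_c})$ by the microlocal Kakeya--Nikodym estimate \eqref{7}. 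The exponent $\sigma_n=\frac2{(n+1)q_c}$ is then $\frac1{q_c}\cdot\frac{q_c-2}{q_c}$, i.e.\ the single-sector gain $(\log\la)^{-1/q_c}$ raised to the H\"older power $\frac{q_c-2}{q_c}=\frac2{n+1}$---not a ``square of $\sigma_n$'' in the Kakeya--Nikodym input, and not an optimization over the angular scale $\theta$, which in the paper is simply fixed at $\la^{-1/8}$ and does not enter the final exponent. Your proposal as written would need to be reorganized along these lines: subcritical bilinear gain plus the $A_-$ ceiling for the far pairs, and almost-orthogonality plus the $\sup$/$\ell^2$ interpolation for the diagonal pairs.
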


Let us collect the tools from \cite{SBLog} that we shall need to prove \eqref{6}.   We first
recall that the symbol $B(x,\xi)$ of $B$ in \eqref{i.b2} 
 is supported in a conic
neighborhood of some $(x_0,\xi_0)\in S^*M$.  We may
assume that its symbol has small enough support so that
we may work in a coordinate chart $\Omega$ so that
$x_0=0$, $\xi_0=(0,0,\dots,0,1)$
and $g_{jk}(0)=\delta^j_k$ in the local coordinates.
So we shall assume that the $x$-support of $B(x,\xi)$
is contained in $\Omega$.  We also may assume that
$B(x,\xi)$ vanishes when $\xi$ is outside of a small
neighborhood of $(0,\dots,0,1)$.  These reductions and those that
follow will contribute to the number of summands in \eqref{i.b1}; however, it will be clear that the
$N$ there will be independent of $\la\gg 1$.

The bilinear arguments we shall need to use will involve
microlocal cutoffs corresponding to angular sectors of 
aperture $\approx \la^{-1/8}$.  To construct them
write $x=(x',x_n)$ so that $x'=(x_1,\dots,x_{n-1})$
denotes coordinates on the hypersurface where
$x_n=0$ in our coordinates.  We shall want these
cutoffs to commute with the unit-speed geodesic flow
$\chi_t: \, S^*M\to S^*M$ on the support of $B(x,\xi)$
if $|t|\le \delta$, where $\delta$ is as in 
\eqref{i.2.2}.  Recall that $\chi_t$ is the flow
generated by the Hamilton vector field associated with
the principal symbol
$$p(x,\xi)=\bigl(\, \sum_{j,k}g^{jk}(x)\xi_j\xi_k\, \bigr)^{1/2}$$
of $P=\sqrt{-\Delta_g}$.  Here $g^{jk}(x)=\bigl(g_{jk}(x)\bigr)^{-1}$ is the cometric and so
$S^*M=\{(x,\xi): \, p(x,\xi)=1\}$.

If $|t|\le 2\delta$ with $\delta>0$ small enough then the 
map
\begin{equation}\label{map}
(t,x',\eta)\to \chi_t(x',0,\eta)\in S^*M, \quad
(x',0,\eta)\in S^*M
\end{equation}
is a diffeomorphism from a neighborhood of $x'=0$, $t=0$
and $\eta=(0,\dots,0,1)$ to a neighborhood of
$(x_0,\xi_0)=(0, (0,\dots,0,1))\in S^*M$.  Indeed, since we are 
assuming that $g_{jk}(0)=\delta^j_k$, the Jacobian of the
map is the identity at $(x_0,\xi_0)$.  After possibly
shrinking the support, we may assume that these 
properties are valid on a neighborhood of $\text{supp } B(x,\xi)$. 

Write the inverse of \eqref{map} as
\begin{equation}\label{map'}
S^*M\ni (x,\omega)\to 
\bigl(\tau(x,\omega), \Phi(x,\omega), \Theta(x,\omega)\bigr) \in  (-\delta,\delta) \times \{ y'\in 
{\mathbb R}^{n-1} \} \times S^*_{(\Phi(x,\omega),0)}M.
\end{equation}
Thus, the unit speed geodesic passing through
$(x,\omega)\in S^*\Omega$ arrives at the plane in our local coordinates where
$y_n=0$ at $\Phi(x,\omega)$, has covector
$\Theta(x,\omega)\in S^*_{(\Phi(x,\omega),0)}M$ there, and $\tau(x,\omega)
=d_g(x,(\Phi(x,\omega),0))$ is the geodesic
distance between $x$ and the point $(\Phi(x,\omega),0)$ on this plane.

We can now define the microlocal cutoffs that we shall
use.  First, we let $\nu$ 
range over a 
$\la^{-\frac18}$--separated
set in $S^{n-1}$ lying in our conic neighborhood
of $\xi_0=(0,\dots,0,1)$.  We then choose a
partition of unity $\sum\beta_\nu(\xi)=1$ on this set
which includes a neighborhood of the $\xi$-support of $B(x,\xi)$
so that each $\beta_\nu$ is supported in a $2\la^{-\frac{1}{8}}$ cap about $\nu\in S^{n-1}$, and so that if 
$\beta_\nu(\xi)$ is the homogeneous of degree zero
extension of $\beta_\nu$ to $\Rn\backslash 0$ we have
$$|D^\alpha \beta_\nu(\xi)|\le C_\alpha
\la^{|\alpha|/8} \quad \text{if } \, \,
|\xi|=1.$$
Finally, if $\psi \in C^\infty_0(\Omega)$ equals
one in a neighborhood of the $x$-support of $B(x,\xi)$,
and if $\tilde \beta\in C^\infty_0((0,\infty))$ equals one in a neighborhood of the support of the Littlewood-Paley bump function
in \eqref{i.b2} we define
\begin{equation}\label{qnusymbol}
q_\nu(x,\xi)=\psi(x) \, \beta_\nu\big(\Theta(x, \xi/p(x,\xi)\bigr) \, \tilde\beta\bigl(p(x,\xi)/\la\bigr).
\end{equation}
It then follows that the pseudo-differential operators
$Q_\nu$ with these symbols belong to a bounded subset
of $S^0_{7/8,1/8}(M)$.  We have constructed these operators so that
\begin{equation}\label{egorov99}
q_\nu(x,\xi)=q_\nu\bigl(\chi_t(x,\xi)\bigr) \quad
\text{on \, supp }B(x,\xi) \, \, \text{if } \, \,
|t|\le 2\delta.
\end{equation}
Since
$$\sigma_\la =\frac1{2\pi} \int e^{it\la} 
e^{-itP} \, \Hat \rho(t)\, dt$$
and
$\Hat \rho(t)=0$, $|t|\ge \delta$, as noted in
\cite[(3.24)]{SBLog} one can use Egorov's theorem and the universal estimates in \cite{sogge88}
to see that
\begin{equation}\label{commute} 
\bigl\| \tilde \sigma_\la Q_\nu -Q_\nu 
\tilde  \sigma_\la 
\bigr\|_{L^2(M)\to L^{q_c}(M)} \lesssim
\la^{-\frac14}.
\end{equation}
In the proof of Theorem~\ref{thm1} we only need weaker $O(\la^{\frac1{q_c}-\frac14})$ bounds, which, by a simple exercise,
follow from \eqref{egorov99}, Sobolev estimates and the bounds in \cite{sogge88}.
If $\delta>0$ in \eqref{i.2.2} is small enough
we also have 
\begin{equation}\label{add}
\tilde \sigma_\la -\sum_\nu \tilde \sigma_\la Q_\nu
=R_\la \quad
\text{where } \, \|R_\la\|_{L^p\to L^p}=O(\la^{-N})
\quad \forall \, N \, \, \text{if } \, \,
1\le p \le \infty.
\end{equation}
Finally, the support properties of the symbols imply that
we have the uniform bounds
\begin{equation}\label{alor}
\sum_\nu \|Q_\nu h\|^2_{L^2(M)}\le C\|h\|^2_{L^2(M)}.
\end{equation}

Next, the  global estimates that we shall use to prove the small heights bounds in Proposition~\ref{prop3}
 are the following
``global'' {\em ``$L^{q_c}$-microlocal Kakeya-Nikodym estimates":}
\begin{equation}\label{7}
\sup_\nu \|Q_\nu \rho_\la\|_{L^2(M)\to L^{q_c}(M)}\lesssim \la^{\frac1{q_c}} \cdot \bigl(\log\la \bigr)^{-\frac1{q_c}}.
\end{equation}
This, like \eqref{2}, is valid when $(M,g)$ has non-positive sectional curvatures, although in the next section we shall see that stronger bounds hold if the sectional curvatures all assumed to be negative.
The inequality \eqref{7} is a slight improvement
over the corresponding estimate (2.18) in \cite{SBLog}.

One obtains \eqref{7} via a simple interpolation argument and the following pointwise microlocalized estimates
\begin{multline}\label{14}
K_{\la,\mu}(x,y)=
T^{-1}\int \Hat \Psi(t/T) \beta(t/\mu) \, e^{it\la} \Bigl(Q_\nu \circ \cos(tP)\circ Q_\nu^*\Bigr)(x,y)\, dt 
\\
=O\bigl(T^{-1} \, \mu^{1-\frac{n-1}2} \la^{\frac{n-1}2}\bigr), \, \, \Psi = (\rho)^2, \, \, 
\quad 1\le \mu\le T=4c_0\log\la,
\end{multline}
if $\beta\in C^\infty_0((1/2,2))$ is a Littlewood-Paley bump function
satisfying $1=\sum_{-\infty}^\infty \beta(t/2^j)$ for $t>0$,
and $c_0>0$ is sufficiently small.  To prove this
one uses the Hadamard parametrix after lifting
the calculation up to the universal cover.
The argument is almost identical to the proof of
(3.8) in \cite{BlairSoggeToponogov} or
the proof of Theorem 2.1 in \cite{SBLog}.  The 
pointwise bounds in \eqref{14} are more
favorable than those in \eqref{2} since the result of the 
microlocalization in \eqref{14}  is that
number of significant terms in the calculation only grows linearly in $T$, unlike in the
proof of \eqref{2} where there is exponential growth.  
For more details, see the remark at the 
end of \S\ref{neg}.

By \eqref{14}, the integral operator with kernel
$K_{\la,\mu}$ maps $L^1(M)\to L^\infty(M)$ with norm
$O\bigl(T^{-1} \, \mu^{1-\frac{n-1}2} \la^{\frac{n-1}2}\bigr)$.  By orthogonality it also maps $L^2\to L^2$ with norm
$O(T^{-1}\mu)$.  So by interpolation, it maps $L^{q_c'}\to L^{q_c}$ with norm $O(T^{-1}\mu^{\frac2{n+1}} \la^{\frac{n-1}{n+1}} )$.  
By taking $\mu=2^j\lesssim T$ and adding up these
dyadic estimates we find that if $a$ is as in \eqref{2} we have that the operator with kernel
$$T^{-1}\int \Hat \Psi(t/T) (1-a(t)) \,  e^{it\la}\Bigl(Q_\nu \circ \cos(tP)\circ Q_\nu^*\Bigr)(x,y)\, dt $$
maps $L^{q_c'}\to L^{q_c}$ with norm $O((\la/T)^{\frac2{q_c}})$.  Finally, since the local estimates from \cite{sogge88} show that when we
replace $(1-a(t))$ above by $a(t)$ the resulting integral operator maps $L^{q_c'}\to L^{q_c}$ with a better norm norm $O(T^{-1}\la^{\frac2{q_c}})$, we must have
\begin{equation}\label{99}\sup_\nu \|Q_\nu \Psi(T(\la-P)) Q_\nu^*\|_{L^{q_c'}(M)\to L^{q_c}(M)}\lesssim \la^{\frac2{q_c}} \cdot \bigl(\log\la \bigr)^{-\frac{2}{q_c}},
\end{equation}
which implies \eqref{7} by a simple ``$TT^*$'' argument.

\medskip
\noindent{\bf Bilinear decomposition}

We also require the (local) bilinear harmonic arguments from \cite{SBLog} that take advantage of the 
fact that the norm in \eqref{6} is taken over $A_-$.  These techniques go back to the bilinear Fourier restriction arguments from
 Tao~\cite{TaoBilinear}, Tao, Vargas and Vega~\cite{TaoVargasVega}, Lee~\cite{LeeBilinear}, Wolff~\cite{WolffCone}  and others. 

We shall write using \eqref{i.18} and  \eqref{add}
\begin{equation}\label{add'}
\bigl(\tilde \rho_\la f\bigr)^2
=\sum_{\nu, \tilde \nu} 
(\tilde \sigma_\la Q_\nu h) \cdot
(\tilde \sigma_\la Q_{\tilde \nu}h)+O(\la^{-N}), \quad
h=\rho_\la f.
\end{equation}
We shall organize the pairs of directions
$(\nu,\tilde\nu)\in S^{n-1}\times S^{n-1}$ just as
in \cite{TaoVargasVega}.  So, let us write
$\nu=(\nu',\nu_n)$ where $\nu'\in {\mathbb R}^{n-1}$
is near the origin since $\nu$ is close
to $(0,\dots,0,1)$.

To this end, consider the collection of 
dyadic cubes $\{\tau^j_{\mu'}\}$ in ${\mathbb R}^{n-1}$
of side length $2^j$, where $\tau^j_{\mu'}$ denotes
the translation of $[0,2^j)^{n-1}$ by 
$\mu'\in 2^j {\mathbb Z}^{n-1}$.  Two such cubes
are said to be ``close'' if they are not adjacent
but have adjacent parents of sidelength $2^{j+1}$.
If this is the case, we write $\tau^j_{\mu'}
\sim \tau^j_{\tilde \mu'}$.  Close cubes are separated
by a distance which is comparable to $2^j$.  As noted
in \cite{TaoVargasVega}, any distinct $\nu', \tilde
\nu'\in {\mathbb R}^{n-1}$ lie in a unique pair of
close cubes.  Thus, there is a unique triple 
$j,\mu'\tilde \mu'$ such that
$(\nu', \tilde \nu')\in \tau^j_{\mu'}\times
\tau^j_{\tilde \mu'}$ and $\tau^j_{\mu'}\sim
\tau^j_{\tilde \mu'}$.  Since
$\nu'$ is close to the origin for our $\nu\in S^{n-1}$,
we need only to consider integers $j\le 0$.

If we then let $J \ll 0$ be the integer satisfying
$2^{J-1}<8\la^{-1/8}\le 2^J$, it follows that the
sum in \eqref{add'} can be organized as
\begin{equation}\label{org}
\left( \sum_{J+1\le j\le 0} \,  \, 
\sum_{(\nu',\tilde \nu')\in
\{\tau^j_{\mu'}\times \tau^j_{\tilde \mu'}: \, 
\tau^j_{\mu'}\sim \tau^j_{\tilde \mu'}\}}
+\sum_{(\nu',\tilde \nu')\in \Xi_J}
\right)
\bigl(\tilde \sigma_\la Q_\nu h\bigr) \cdot
\bigl(\tilde \sigma_\la Q_{\tilde \nu}h\bigr),
\end{equation}
where $\Xi_J$ denotes the remaining pairs not included
in the first sum.  These include diagonal pairs where
$\nu'=\tilde \nu'$, and all $(\nu',\tilde \nu')\in
\Xi_J$ satisfy $|\nu'-\tilde \nu'|\lesssim \la^{-1/8}$.
Thus, for each fixed $\nu'$, we have
$\#\{\tilde \nu': \, (\nu', \tilde \nu')\in
\Xi_J\}=O(1)$.

As in \cite{SBLog}, write
\begin{multline}\label{diag}
\diag(h)=\sum_{(\nu',\tilde \nu')\in
\Xi_J}
\bigl(\tilde \sigma_\la Q_\nu h\bigr) \cdot
\bigl(\tilde \sigma_\la Q_{\tilde \nu}h\bigr), 
\\
 \text{if } \, \, 
\la^{-1/8}\in (2^{J-4}, 2^{J-3}], \, \, \text{and } \quad h=\rho_\la f.
\end{multline}
We shall estimate the $L^{q_c/2}$-norm of this ``diagonal'' term  using results from \cite{SBLog}.

Next, if we consider the first sum in the left of  \eqref{org} over $(\nu',\tilde\nu')\notin \Xi_J$ along with the error term in \eqref{add'},
\begin{equation}\label{ndsum}
\far(h)=
\sum_{J+1\le j\le 0}
\sum_{(\nu',\tilde \nu')\in
\{\tau^j_{\mu'}\times \tau^j_{\tilde \mu'}: \, 
\tau^j_{\mu'}\sim \tau^j_{\tilde \mu'}\}}
\bigl(\tilde \sigma_\la Q_\nu h\bigr) \cdot
\bigl(\tilde \sigma_\la Q_{\tilde \nu}h\bigr)+O(\la^{-N}),
\end{equation}
we have
the favorable bilinear estimates
\begin{equation}\label{far}
\int |\far(h)|^{q/2} \, dx
 \le C
\la \, 
\bigl(\la^{\frac78}\bigr)^{\frac{n-1}2(q-q_c)}
\|h\|_{L^2(M)}^q, \, \, 
\text{if } \, \, q\in (\tfrac{2(n+2)}n, q_c).
\end{equation}
For, in the notation of \cite{SBLog}, 
$\far(h)$ here equals
$\off(h)+\smo(h)$ together with
the error term in \eqref{add'}, and so 
\eqref{far} follows from 
summing over $j\in [J+1,0]$ in inequality
(4.7) in \cite{SBLog} and the fact that, like the
error term in \eqref{add'},
$\| \smo(h)\|_{q_c/2}\lesssim
\la^{-N}\|h\|^2_2$ for all $N$, due to 
(4.5) in \cite{SBLog}.  Based on this, one obtains
\eqref{far}.  We note that the estimates
in \cite{SBLog} for the non-trivial part,
$\off$ of $\far$ are based
on the bilinear oscillatory integral estimates
of Lee~\cite{LeeBilinear}.  The terms $\smo$ in
\cite{SBLog} were trivial microlocal error terms to allow us to use 
those estimates via parabolic scaling.  We have opted to lump together these
two terms into \eqref{ndsum} to simplify the notation and argument a tiny bit in what follows.

The significance of \eqref{far} is that the
second power of $\la$ in the right is negative
as $q<q_c$ there.  On the other hand, this makes it somewhat
awkward to take advantage of this power-improvement
to prove the $L^{q_c}$ bound in our remaining
inequality \eqref{6}.  However, as in \cite{SBLog}, we shall
be able to use \eqref{far} since  the norm in
the left hand side of \eqref{6} is taken
over $A_-$.
We also note that, we have rewritten the left side of \eqref{add'} as follows
\begin{equation}\label{add''}
(\tilde  \rho_\la f)^2 =(\tilde \sigma_\la h)^2= \diag(h) + \far(h), \quad h= \rho_\la f.
\end{equation}

 
Next, let us use the fact that we can use \eqref{far} and \eqref{add''} to prove a slightly
stronger version of estimates in \cite{SBLog} that will allow us to prove \eqref{6}.
Specifically,  as we shall show in the next subsection, we can use \eqref{far} to obtain the following
\begin{equation}\label{8}
\|\tilde \sigma_\la h\|_{L^{q_c}(A_-)}
\le C\la^{\frac1{q_c}} \la^{-\delta_n} \|h\|_{L^2(M)}
+C\Bigl(\sum_\nu \|\tilde \sigma_\la \qn h\|_{L^{q_c}(M)}^{q_c}\Bigr)^{1/q_c}, \, \, 
h=\rho_\la f,
\end{equation}
for certain $\delta_n>0$.



Let us now see how we can use \eqref{8} along with the microlocal Kakeya-Nikodym estimate \eqref{7} to prove Proposition~\ref{prop3}.

\begin{proof}[Proof of Proposition~\ref{prop3}]

In this proof, let us express the microlocal log-power gain in \eqref{7} as
\begin{equation}\label{22}
\e(\la)=(\log \la)^{-\frac1{q_c}}.
\end{equation}

With $h=\rho_\la f$, we have from \eqref{8} that for $n\ge2$
\begin{equation}\label{11}
\|\tilde \rho_\la f||_{L^{q_c}(A_-)} \le C\la^{\frac1{q_c}} (\log\la)^{-1}\|f\|_{L^2(M)}
+C\Bigl(\sum_\nu \| \tilde \sigma_\la \qn \rho_\la f\|_{L^{q_c}(M)}^{q_c}\Bigr)^{1/q_c}.
\end{equation}
Note that the universal (local) spectral projection bounds from \cite{sogge88} and \eqref{i.b3}  yield
\begin{equation}\label{blah}
\|\tilde \sigma_\la\|_{L^2\to L^{q_c}}=O(\la^{\frac1{q_c}}).
\end{equation}
Using this and the fact that the $\qn$ are almost orthogonal, we find that
\begin{equation}\label{12}
\sum_\nu  \| \tilde \sigma_\la \qn \rho_\la f\|_{L^{q_c}(M)}^{2}
\lesssim \la^{\frac2{q_c}} \sum_\nu \|\qn \rho_\la f\|_2^2
\lesssim \la^{\frac2{q_c}}\|\rho_\la f\|_2^2
\lesssim \la^{\frac2{q_c}}\| f\|_2^2.
\end{equation}

Next, note that by \eqref{commute} and the fact that $\|\rho_\la\|_{L^2\to L^2}=O(1)$, we have
\begin{equation}\label{star1}
\|\tilde \sigma_\la Q_\nu \rho_\la f\|_{L^{q_c}(M)}\lesssim
\|Q_\nu\tilde \sigma_\la \rho_\la f\|_{L^{q_c}(M)}+\la^{-\frac14}\|f\|_{L^2(M)}.
\end{equation}
Additionally, $Q_\nu \tilde \sigma_\la =Q_\nu B\sigma_\la =
BQ_\nu\sigma_\la +[Q_\nu,B]\sigma_\la$,  and since the commutator $[Q_\nu,B]$ is in
$S^{-3/4}_{7/8,1/8}$ and band limited (see \cite[Theorem 4.4, Chapter II]{TaylorPDO})
we have $\|[Q_\nu,B]\|_{L^{q_c}\to L^{q_c}}=O(\la^{-\frac34})$.  So, by \eqref{blah} and \eqref{i.b3}
\begin{align}\label{star2}
\|Q_\nu \tilde \sigma_\la \rho_\la f\|_{L^{q_c}(M)}
&\lesssim \|BQ_\nu \sigma_\la \rho_\la f\|_{L^{q_c}(M)}
+\la^{\frac1{q_c}-\frac34}\|f\|_{L^2(M)}
\\
&\lesssim \|Q_\nu \sigma_\la \rho_\la f\|_{L^{q_c}(M)}
+\la^{\frac1{q_c}-\frac34}\|f\|_{L^2(M)}.
\notag
\end{align}
If we combine \eqref{star1} and \eqref{star2}, since the microlocal cutoffs $Q_\nu$ are uniformly
bounded on all $L^p$ spaces, we conclude that 
\begin{align*}
\|\tilde \sigma_\la Q_\nu \rho_\la f\|_{L^{q_c}(M)} &\lesssim
\|Q_\nu \sigma_\la \rho_\la f\|_{L^{q_c}(M)}+\la^{\frac1{q_c}-\frac14}\|f\|_{L^2(M)}
\\
&\lesssim \|Q_\nu \rho_\la f\|_{L^{q_c}(M)}+\|Q_\nu\circ(I-\sigma_\la)\rho_\la f\|_{L^{q_c}(M)}+\la^{\frac1{q_c}-\frac14}\|f\|_{L^2(M)}
\\
&\lesssim \|Q_\nu \rho_\la f\|_{L^{q_c}(M)}+\|(I-\sigma_\la)\rho_\la f\|_{L^{q_c}(M)}+\la^{\frac1{q_c}-\frac14}\|f\|_{L^2(M)}.
\end{align*}
If we use the first part of \eqref{tilde} and our Kakeya-Nikodym bounds \eqref{7} to estimate the first two terms in the right
we conclude that
\begin{equation}\label{13}
\|\tilde \sigma_\la Q_\nu \rho_\la f\|_{L^{q_c}(M)}\lesssim \la^{\frac1{q_c}} \, \e(\la)\, \|f\|_{L^2(M)},
\end{equation}
with $\e(\la)$ as in \eqref{22}.



If we combine \eqref{11}, \eqref{12} and \eqref{13}, we obtain
\begin{align*}
\|\tilde \rho_\la f\|_{L^{q_c}(A_-)} &\lesssim 
\sup_\nu \|\tilde \sigma_\la Q_\nu \rho_\la f\|_{L^{q_c}(M)}^{\frac{q_c-2}{q_c}} \cdot 
\bigl(\, \sum_\nu \|\tilde \sigma_\la Q_\nu \rho_\la f\|_{L^{q_c}(M)}^2\, \bigr)^{\frac1{q_c}}
+\la^{\frac1{q_c}}(\log\la)^{-1} \|f\|_{L^2(M)}
\\
&\lesssim \bigl(\la^{\frac1{q_c}} \e(\la))^{\frac{q_c-2}{q_c}} \, (\la^{\frac2{q_c}})^{\frac1{q_c}}\|f\|_{L^2(M)}+\la^{\frac1{q_c}}(\log\la)^{-1}\|f\|_{L^2(M)}
\\&\lesssim \la^{\frac1{q_c}} \, \bigl(\e(\la)\bigr)^{\frac{q_c-2}{q_c}}\|f\|_2
\\
&=\la^{\frac1{q_c}} \bigl(\e(\la)\bigr)^{\frac2{n+1}} \, \|f\|_2.
\end{align*}
This combined with \eqref{3} yields
\begin{equation}\label{15}
\|\tilde \rho_\la f\|_{L^{q_c}(M)} \lesssim \la^{\frac1{q_c}} (\e(\la))^{\frac2{n+1}} \, \|f\|_{L^2(M)},
\end{equation}
since $(\e(\la))^{\frac2{n+1}}>(\log\la)^{-1/2}$.  This gives the bounds \eqref{i.4} in Theorem~\ref{thm1}  if one recalls \eqref{0.1}.
\end{proof}

\noindent{\bf Proving the bilinear estimates}

Let us prove \eqref{8} by using  estimates from \cite{SBLog} along with \eqref{far}.

As in these inequalities $\tilde \rho_\la f=\tilde \sigma_\la h$, with $h=\rho_\la f$.  So, we need to control
$$\|\tilde \rho_\la f\|_{L^{q_c}(A_-)} = 
\Bigl(\, \int_{\{x: \, |\tilde \sigma_\la h(x)|>\la^{\frac{n-1}4+\frac18}\}}\bigl|(\tilde \sigma_\la h)^2\bigr|^{q_c/2}\, dx \,  \Bigr)^{1/q_c}.
$$

If we recall \eqref{add''} then of course we have for $q\in (\tfrac{2(n+2)}n, q_c)$ as in \eqref{far}
$$|\tilde \sigma_\la h \,  \tilde \sigma_\la h|^{q_c/2}
\le 2^{q/2} \, |\tilde \sigma_\la h \, \tilde \sigma_\la h|^{\frac{q_c-q}2}\, 
\bigl( \, |\diag(h)|^{q/2}+|\far(h)|^{q/2}\, \bigr).$$

Thus,
\begin{align}\label{3.2}
\|\tilde \rho_\la &f\|_{L^{q_c}(A_-)}^{q_c}
=\int_{A_-}|\tilde \sigma_\la h \, \tilde \sigma_\la h|^{q_c/2}\, dx
\\ 
&\lesssim \int_{A_-}|\tilde \sigma_\la h \, \tilde \sigma_\la h|^{\frac{q_c-q}2} \, |\diag(h)|^{q/2} 
+ \int_{A_-}|\tilde \sigma_\la h \, \tilde \sigma_\la h|^{\frac{q_c-q}2}| \, \far(h)|^{q/2} \notag
\\
&=I+II. \notag
\end{align}

To estimate the second term we use \eqref{far}, the ceiling for $A_-$ and the fact that $\|f\|_2=1$ to see that 
\begin{multline*}
II \lesssim \|\tilde \sigma_\la h\|_{L^\infty(A_-)}^{q_c-q} \,  \la^{-(q_c-q)(\frac78 \frac{n-1}2)} \cdot \la
\le \bigl( \, \la^{\frac{n-1}4+\frac18}\, \bigr)^{(q_c-q)} \cdot \la^{-(q_c-q)(\frac78 \frac{n-1}2)} \cdot \la
\\
=\la^{-(q_c-q)[\frac{3(n-1)}{16}-\frac18]} \cdot \la = \lambda^{1-\delta_n}, \quad \delta_n>0,
\end{multline*}
since $(q_c-q) \cdot( \tfrac{3(n-1)}{16}-\tfrac18) >0$.

To estimate $I$, we use the fact that
by Lemma 4.2 in \cite{SBLog}, 
$$\|\diag(h) \|_{L^{{q_c}/2}(M)}\lesssim 
\Bigl(\, \sum_\nu \| \tilde \sigma_\la Q_\nu h\|_{L^{q_c}(M)}^{2q^*}\, \Bigr)^{\frac1{q^*}} \, + \, \la^{-N},$$
where $q^*=q_c/2$ if $n\ge 3$ and $q^*=3/2$ if $n=2$.
Thus, by H\"older's inequality followed by Young's inequality
\begin{align*}
I&\le \| \tilde \sigma_\la h \, \tilde \sigma_\la h\|_{L^{q_c/2}(A_-)}^{\frac{q_c-q}2} \cdot \|\diag\|_{L^{q_c/2}(M)}^{q/2}
\\
&\le \tfrac{q_c-q}{q_c}  \| \tilde \sigma_\la h \, \tilde \sigma_\la h\|_{L^{q_c/2}(A_-)}^{q_c/2} + \tfrac{q}{q_c} \|\diag\|_{L^{q_c/2}(M)}^{q_c/2}
\\
&\le \tfrac{q_c-q}{q_c}  \| \tilde \sigma_\la h \,  \tilde \sigma_\la h\|_{L^{q_c/2}(A_-)}^{q_c/2} +
\Bigl(\, \sum_\nu  \, \|\tilde \sigma_\la  Q_\nu h\|_{L^{q_c}(M)}^{2q^*}\, \Bigr)^{\frac1{q^*} \frac{q_c}2} \, + \, \la^{-N}.
\end{align*}
Since $\tfrac{q_c-q}{q_c}<1$, the first term in the right can be absorbed in the left side of \eqref{3.2}.

Thus, our bounds imply that for $n\ge 3$
\begin{equation}\label{bilinear 3}
\|\tilde \rho_\la f\|_{L^{q_c}(A_-)}
\lesssim 
\Bigl(\, \sum_\nu \| \tilde \sigma_\la Q_\nu h\|_{L^{q_c}(M)}^{2q^*}\, \Bigr)^{\frac1{2q^*}} + \la^{\frac1{q_c}-\delta_n/q_c} \|f\|_{L^2(M)},
\end{equation}
and for $n=2$
\begin{equation}\label{bilinear 2}
\|\tilde \rho_\la f\|_{L^{q_c}(A_-)}
\lesssim 
\Bigl(\, \sum_\nu \| \tilde \sigma_\la Q_\nu h\|_{L^{q_c}(M)}^{3}\, \Bigr)^{\frac1{3}} + \la^{\frac1{q_c}-\delta_n/q_c} \|f\|_{L^2(M)}.
\end{equation}

Note that the first term on the right side of \eqref{bilinear 2} is larger than the one appearing in \eqref{8}
due to the inclusion $\ell^6\subset \ell^3$.  In
the next subsection we shall see how we can modify the arguments we used for term $I$ to get an improved bilinear type inequality rather than \eqref{bilinear 2} when $n=2$.

\noindent{\bf Modified  bilinear arguments for $n=2$}

To be able to have a variant of \eqref{bilinear 2} which involves
an $\ell^6$ sum instead of $\ell^3$, we need to change the definition of $Q_\nu$ a bit. When $n=2$, instead of $\frac 18$, we fix a small $\e_0$, which will be specified later, and
let $\nu$ denote a $\la^{-\e_0}$--separated
set in $S^{1}$ lying in our conic neighborhood
of $\xi_0=(0,\dots,0,1)$.  We then choose a
partition of unity $\sum\beta_\nu(\xi)=1$ on this set
which includes a neighborhood of the $\xi$-support of $B(x,\xi)$
so that each $\beta_\nu$ is supported in a $2\la^{-\e_0}$ cap about $\nu\in S^{1}$, and so that if 
$\beta_\nu(\xi)$ is the homogeneous of degree zero
extension of $\beta_\nu$ to $\Rn\backslash 0$ we have
$$|D^\alpha \beta_\nu(\xi)|\le C_\alpha
\la^{|\alpha|\e_0} \quad \text{if } \, \,
|\xi|=1.$$
If we let $q_\nu(x,\xi)$ then be defined
as in \eqref{qnusymbol},
it follows that the pseudo-differential operators
$Q_\nu$ with symbols belong to a bounded subset
of $S^0_{1-\e_0,\e_0}(M)$. And similar to \eqref{commute}, we can use the arguments in \cite{SBLog}, which involve applications of Egorov's theoerem and the universal bound in \cite{sogge88} to see that 
\begin{equation}\label{commutee0} 
\bigl\|  \tilde \sigma_\la Q_\nu -Q_\nu 
\tilde \sigma_\la 
\bigr\|_{L^2(M)\to L^{q_c}(M)} \lesssim
\la^{\frac1{q_c}-\frac12+2\e_0},
\end{equation}
which is better than the estimate in \eqref{commute} if $\e_0$ is small since $q_c=6$ when $n=2$.

It is straightforward to check that \eqref{add} and \eqref{alor} hold for the $Q_\nu$ operators defined as above, and by repeating the previous arguments, we also have the analog of \eqref{7}, as long as we let $T=c_0 \log\la$ with $c_0\ll \e_0$. These estimates ensure that the arguments \eqref{22}-\eqref{15}, which is the proof of Proposition~\ref{prop3}, still work in our current setting.

We are choosing the $\la^{-\e_0}$ scale instead of $\la^{-\frac18}$,  since in our later arguments, we shall make explicit use of the fact that the number of choices of $\nu$ is small.   See the remark below \eqref{b25'} for more details. 

Write 
$$(\tilde \rho_\la h)^{2}= \, 
\diag(h)+ \far(h) ,
$$
where $\far(h)$ is defined as in \eqref{ndsum}, and
\begin{multline}\label{diage0}
\diag(h)=\sum_{(\nu',\tilde \nu')\in
\Xi_J}
\bigl(\tilde \sigma_\la Q_\nu h\bigr) \cdot
\bigl(\tilde \sigma_\la Q_{\tilde \nu}h\bigr), 
\\
 \text{if } \, \, 
\la^{-\e_0}\in (2^{J-4}, 2^{J-3}], \, \, \text{and } \quad h=\rho_\la f.
\end{multline}
Here as before, $\nu=(\nu', \nu_n)$, and the  definition of $\Xi_J$ is the same as the one in \eqref{org},  which include diagonal pairs where
$\nu'=\tilde \nu'$, and all $(\nu',\tilde \nu')\in
\Xi_J$ satisfy $|\nu'-\tilde \nu'|\le 32\la^{-\e_0}$.
Thus, for each fixed $\nu'$, we have
$\#\{\tilde \nu': \, (\nu', \tilde \nu')\in
\Xi_J\}=O(1)$.

By \eqref{3.2}, to prove improved version of \eqref{bilinear 2}, it suffices to control the term $I$ involving $\diag(h)$. The other term $II$ involving $\far(h)$ can be handled like before and it actually satisfies better bound since we have improved bilinear estimates with the wider angular separation for the off-diagonal terms here.
To proceed, let us define $$T_\nu h=\sum_{\tilde\nu': \,(\nu',\tilde \nu')\in
\Xi_J}(\tilde\sigma_\la Q_\nu h)(\tilde\sigma_\la Q_{\tilde\nu} h),$$
and write 
\begin{equation}\label{b25}
\begin{aligned}
    ( \diag(h))^{2} &=\big(\sum_\nu T_\nu h\big)^2
    \\&= \sum_{|\nu_1'-\nu_2'|\ge 128\la^{-\epsilon_0}}  T_{\nu_1} hT_{\nu_2} h
+\sum_{|\nu_1'-\nu_2'|\le 128\la^{-\epsilon_0}}  T_{\nu_1} hT_{\nu_2} h  \\
    &=A+B.
    \end{aligned}
\end{equation}
Then for $q\in (4,6)$, we have
\begin{equation}\label{b25'}
\begin{aligned}
    | \, \diag(h)|^{q/2}& \lesssim \big| \sum_{|\nu_1'-\nu_2'|\ge 128\la^{-\epsilon_0}}  T_{\nu_1} hT_{\nu_2} h\big|^{\frac q4}
    \\
    &\qquad\qquad+ \big| \sum_{|\nu_1'-\nu_2'|\le 128\la^{-\epsilon_0}}  T_{\nu_1} hT_{\nu_2} h \big|^{\frac q4} \\
    &=|A|^{\frac q4}+|B|^{\frac q4}.
    \end{aligned}
\end{equation}

By splitting $( \diag(h))^{2}$ as a sum of $A$ and $B$ as above, we essentially want to make use of the bilinear estimates for a second time, where $A$ corresponds to the off-diagonal terms and $B$ corresponds to the diagonal ones. However, for the off-diagonal terms, here we are not using the classical Whitney-type decomposition as in \eqref{add'}-\eqref{org} since the $T_\nu$ operators themselves are defined as product of the $Q_\nu$ operators, which makes it difficult to define the $\sum_\nu T_\nu$ operators for certain collection of nearby $\nu$ indexes. Instead,  we assume that the number of choices of $\nu$ is small, which can be achieved by choosing $\e_0$ to be small enough, so that after applying bilinear estimates for each of the single term in $A$, we can sum them up in a rather crude but still sufficient way. 
We shall also use the fact that when $n=2$, $q_c=6$ and  thus $\frac{6}{4}\in (1,2)$, which allows us to follow the strategies in \cite{SBLog} to control the diagonal term $B$ using an almost orthogonal type inequality(see \eqref{almost orthogonal} below).

More explicitly, to estimate the term involving $A$ in $I$, let us first assume that $\nu_1, \nu_2$ are fixed, with $|\nu_1'-\nu_2'|\ge 128\la^{-\e_0}$. By
the  Cauchy-Schwarz inequality and the remarks below \eqref{diage0}, it is easy to see that
$$|T_{\nu_1} hT_{\nu_2} h|\le C\big(\sum_{\tilde\nu_1': \,|\tilde\nu_1'-\nu_1'|\le 32\la^{-\e_0}}|\tilde\sigma_\la Q_{\tilde\nu_1} h|^2\big)
\big(\sum_{\tilde\nu_2': \,|\tilde\nu_2'-\nu_2'|\le 32\la^{-\e_0}}|\tilde\sigma_\la Q_{\tilde\nu_2} h|^2\big)
$$
Since the number of terms in both summations are 
bounded independently of $\la$,
it suffices to control a single term involving
$|\tilde\sigma_\la Q_{\tilde\nu_1} h\tilde\sigma_\la Q_{\tilde\nu_2} h|^2,
$
where $|\tilde\nu_1'-\tilde\nu_2'|\approx 2^j\in (32\la^{-\epsilon_0},1)$.
To proceed, let $\mu_1,\mu_2\in {S}^{1}$  satisfy $|\mu_1-\mu_2|\approx 2^j$, and each be associated with an operator $Q_{\mu_1}$, $Q_{\mu_2}$, where 
$Q_{\mu_1}$ is the pseudo-differential operator with symbol $q_{\mu_1}(x,\xi)$, and we assume $q_{\mu_1}(x,\xi)$ is supported in a $2^j$ conical neighborhood of $\mu$, with $q_{\mu_1}(x,\xi)\equiv 1$ in a neighborhood of the support of $Q_{\tilde\nu_1}$. It is not hard to see that
$$\|(I-Q_{\mu_1})\circ Q_{\tilde \nu_1}\|_{L^2\rightarrow L^2}\lesssim_N \la^{-N},\,\,\forall\,\,\,N>0.
$$
Thus, if we we define $Q_{\mu_2}$ similarly, we have
$$|\tilde\sigma_\la Q_{\tilde\nu_1} h\tilde\sigma_\la Q_{\tilde\nu_2} h|^2=|\tilde\sigma_\la Q_{\mu_1} Q_{\tilde\nu_1} h\tilde\sigma_\la Q_{\mu_2}Q_{\tilde\nu_2} h|^2+
O(\la^{-N}).
$$
Now if we apply bilinear estimates at the scale $2^j$ in $L^{\frac q2}$(see e.g., (4.12) in \cite{SBLog}), along with the fact that $\tilde\sigma_\la h $ is bounded by $\la^{\frac{n-1}4+\frac18}$ (with $n=2$) on the set $A_-$,
we  have, similar to before, 
\begin{equation}\label{b24}
\begin{aligned}
    \int_{A_-}|\tilde\sigma_\la h \, \tilde\sigma_\la h|^{\frac{6-q}2}| (\tilde\sigma_\la Q_{\mu_1} Q_{\tilde\nu_1} h)^2(\tilde\sigma_\la Q_{\mu_2}Q_{\tilde\nu_2} h)^2|^{q/4} \le \la^{1-\delta_2},
    \end{aligned}
\end{equation}
for some fixed $\delta_2>0$. Here the $\la^{-\delta_2}$ gain is independent of $\e_0$ and $j$, whenever we have $2^j\ge \la^{-\frac18}$.

Since the number of choices of $\nu_1', \nu_2'$ in the sum in $A$ is bounded by $\la^{2\e_0}$, \eqref{b24} implies
\begin{equation}
\begin{aligned}
    \int_{A_-}|\tilde\sigma_\la h \, \tilde\sigma_\la h|^{\frac{6-q}2}|A|^{q/4} \lesssim \la^{1+3\epsilon_0-\delta_2}+\la^{-N}.
    \end{aligned}
\end{equation}
By choosing $\epsilon_0$ small enough so that $3\epsilon_0 \le \delta_2/2$, this gives us the desired bound.

To control the term involving $B$, let us define 
\begin{equation}
    \begin{aligned}
    S_{\nu_1} h &=\sum_{\nu_2': |\nu_1'-\nu_2'|\le 128\la^{-\epsilon}}  T_{\nu_1} hT_{\nu_2} h \\
    &=\sum_{\tilde\nu_1',\nu_2',\tilde\nu_2'}(\tilde\sigma_\la Q_{\nu_1} h)(\tilde\sigma_\la Q_{\tilde\nu_1} h)(\tilde\sigma_\la Q_{\nu_2} h)(\tilde\sigma_\la Q_{\tilde\nu_2} h),
    \end{aligned}
\end{equation}
where $|\tilde\nu_1'-\nu_1'|\le 32\la^{-\e_0}$, $|\tilde\nu_2'-\nu_2'|\le 32\la^{-\e_0}$, $|\nu_1'-\nu_2'|\le 128\la^{-\e_0}$, and thus $|\tilde\nu_2'-\nu_1'|\le 160\la^{-\e_0}$. Overall, the number of terms in the second line is 
bounded independently of $\la$,
and we have $B=\sum_{\nu_1} S_{\nu_1} h$. If we use a similar argument as in the proof of Lemma 4.2 in \cite{SBLog}, it is not hard to show the following almost orthogonal type inequality  
\begin{equation}\label{almost orthogonal}
    \begin{aligned}
    \|\sum_{\nu_1} S_{\nu_1} h\|_{L^{{3}/2}(M)}
    \lesssim 
\Bigl(\, \sum_{\nu_1} \| S_{\nu_1} h\|^{\frac32}_{L^{{3}/2}(M)}\, \Bigr)^{\frac2{3}} \, + \, \la^{-N},
    \end{aligned}
\end{equation}
which implies that 
\begin{equation}\label{almost orthogonal 1}
    \begin{aligned}
    \|\sum_{\nu_1} &S_{\nu_1} h\|_{L^{{3}/2}(M)}
  \\
    &\lesssim 
\Bigl(\, \sum_{\nu_1} \|\sum_{\tilde\nu_1',\nu_2',\tilde\nu_2'}(\tilde\sigma_\la Q_{\nu_1} h)(\tilde\sigma_\la Q_{\tilde\nu_1} h)(\tilde\sigma_\la Q_{\nu_2} h)(\tilde\sigma_\la Q_{\tilde\nu_2} h)\|_{L^{{3}/2}(M)}^{\frac32}\, \Bigr)^{\frac2{3}} \, + \, \la^{-N}
     \\
    &\lesssim 
\Bigl(\, \sum_{\nu_1} \| \tilde \sigma_\la Q_{\nu_1} h\|_{L^{6}(M)}^{6}\, \Bigr)^{\frac2{3}} \, + \, \la^{-N},
    \end{aligned}
\end{equation}
where in the second inequality we essentially used the fact that for fixed $\nu_1$, the number of terms in the sum  is finite.

Thus, by H\"older's inequality followed by Young's inequality
\begin{align*}
 \int_{A_-}|\tilde\sigma_\la h \,\tilde \sigma_\la h|^{\frac{6-q}2}|B|^{q/4} &\le \|\tilde\sigma_\la h \,  \tilde\sigma_\la h\|_{L^{3}(A_-)}^{\frac{6-q}2} \cdot \|B\|_{L^{3/2}(M)}^{q/4}
\\
&\le \tfrac{6-q}{6}  \|\tilde\sigma_\la h \,  \tilde\sigma_\la h\|_{L^{3}(A_-)}^{3} + \tfrac{q}{6} \|B\|_{L^{3/2}(M)}^{3/2}
\\
&\le \tfrac{6-q}{6}  \|\tilde\sigma_\la h \,  \tilde\sigma_\la h \|_{L^{3}(A_-)}^{3} +
 \sum_{\nu_1}  \, \|\tilde\sigma_\la  Q_{\nu_1} h\|_{L^{6}(M)}^{6}\, \, + \, \la^{-N}.
\end{align*}
Since $\tfrac{6-q}{6}<1$, the first term in the right can be absorbed in the left side of \eqref{3.2}.

Thus, the proof of \eqref{8} for the case $n= 2$ is complete.

\newsection{Further improvements for negatively curved manifolds}\label{neg}
As we shall see in the next subsection, if we assume that {\em all} of the sectional curvatures of $(M,g)$ are {\em negative}, then
we can improve the second
global estimate, \eqref{7}, that we used to
\begin{equation}\label{77}
\sup_\nu \|Q_\nu \rho_\la\|_{L^2(M)\to L^{q_c}(M)}\lesssim \la^{\frac1{q_c}} \cdot \e(\la),
\quad \text{with } \, \, \e(\la)=(\log \la)^{-1/2}.
\end{equation}

If one repeats the above arguments, one finds that this implies that we can therefore improve Theorem~\ref{thm1} under this curvature assumption as follows.

\begin{theorem}\label{thm2}
Assume that $(M,g)$ is an $n$-dimensional compact manifold with all sectional curvatures being negative.  Then, if $T=c_0\log\la$ with $c_0>0$ sufficiently small, we have
\begin{equation}\label{100}
\|\rho(T(\la-P))f\|_{\lqcm} \lesssim \la^{1/q_c} \, \bigl(\log\la\bigr)^{-\alpha_n}\|f\|_2,
\end{equation}
where $\alpha_n=\tfrac{1}{(n+1)}$.
\end{theorem}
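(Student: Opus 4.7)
The plan is to reduce Theorem~\ref{thm2} to the improved microlocal Kakeya-Nikodym estimate~\eqref{77} and then to rerun the argument of Proposition~\ref{prop3} with the stronger value $\e(\la)=(\log\la)^{-1/2}$. Granting~\eqref{77}, the deduction is essentially cost-free: substituting into the final interpolation chain in the proof of Proposition~\ref{prop3} yields
\begin{equation*}
\|\tilde \rho_\la f\|_{L^{q_c}(A_-)} \lesssim \la^{\frac{1}{q_c}} \bigl(\e(\la)\bigr)^{\frac{2}{n+1}} \|f\|_{L^2(M)} = \la^{\frac{1}{q_c}} (\log\la)^{-\frac{1}{n+1}} \|f\|_{L^2(M)}.
\end{equation*}
The large-height bound of Proposition~\ref{prop2} gives $(\log\la)^{-1/2}$, which is absorbed into $(\log\la)^{-\alpha_n}$ since $\alpha_n = 1/(n+1) \le 1/3 < 1/2$ for $n\ge 2$. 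This yields~\eqref{100} at the level of the semi-global operators $\tilde \rho_\la$, from which the claim for $\rho_\la$ follows via~\eqref{tilde} and~\eqref{i.b1} exactly as in the reduction to~\eqref{0.1}.

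The crux is therefore establishing~\eqref{77}. Mimicking the derivation of~\eqref{7} from~\eqref{14}, I would aim to improve the pointwise bound on $K_{\la,\mu}$ to
\begin{equation*}
K_{\la,\mu}(x,y) = O\bigl(T^{-1}\mu^{-\frac{n-1}{2}}\la^{\frac{n-1}{2}}\bigr), \quad 1\le \mu\le T = 4c_0\log\la,
\end{equation*}
a gain of a factor of $\mu^{-1}$ over the non-positively curved case. Lifting the integral defining $K_{\la,\mu}$ to the universal cover $\widetilde{M}$ and applying the Hadamard parametrix, the amplitude in the stationary phase representation of $\cos(tP)(\tilde x,\tilde y)$ is controlled by the reciprocal square root of the Jacobi field determinant, which under strict negative curvature decays exponentially in the geodesic distance by Rauch comparison (cf.~\cite{Berard}). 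After microlocalization by the $\la^{-1/8}$-aperture cutoff $Q_\nu$, choosing $c_0$ small enough depending on an upper bound for the curvature ensures that, for each $\mu\le T$, only $O(1)$ lifts of $y$ from the fundamental domain lie inside the angular tube of length $\mu$ about the geodesic through $\tilde x$ in direction $\nu$. The combination of the exponential amplitude decay and the $O(1)$ bound on relevant lifts produces the sharpened $\mu^{-(n-1)/2}$ pointwise bound.

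Given this sharpened kernel bound, interpolate the corresponding $L^1\to L^\infty$ estimate against the trivial $L^2\to L^2$ bound $O(T^{-1}\mu)$ from orthogonality. The interpolated $L^{q_c'}\to L^{q_c}$ norm at each dyadic scale $\mu=2^j$ works out to $O(T^{-1}\la^{2/q_c})$, crucially independent of $\mu$, so summing the $O(\log T)$ dyadic pieces and combining with the local contribution (controlled by~\cite{sogge88}) yields
\begin{equation*}
\sup_\nu \|Q_\nu \Psi(T(\la-P)) Q_\nu^*\|_{L^{q_c'}(M)\to L^{q_c}(M)} \lesssim \la^{\frac{2}{q_c}}(\log\la)^{-1},
\end{equation*}
which by a $TT^*$ argument promotes to~\eqref{77}.

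The main obstacle lies in the improved pointwise bound on $K_{\la,\mu}$, and specifically in the $O(1)$ count of microlocally relevant lifts. This requires careful bookkeeping of how a transverse aperture $\la^{-1/8}$ spreads along a geodesic in $\widetilde{M}$ under exponential divergence, together with a volume count against the $\Gamma$-orbit of $\tilde y$; the smallness of $c_0$ relative to the curvature is essential both to keep the transverse tube below unit scale and to suppress contributions from distant lifts.
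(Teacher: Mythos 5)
Your overall architecture matches the paper's: reduce Theorem~\ref{thm2} to the improved Kakeya--Nikodym bound \eqref{77}, rerun the proof of Proposition~\ref{prop3} with $\e(\la)=(\log\la)^{-1/2}$ (the large-height bound from Proposition~\ref{prop2} is indeed dominant), and prove \eqref{77} by lifting the dyadic kernels to the universal cover and exploiting the exponential decay of the leading Hadamard coefficient under strictly negative curvature. The first part is fine. The gap is in the second part, and it is quantitative: the sharpened pointwise bound you propose, $K_{\la,\mu}=O(T^{-1}\mu^{-\frac{n-1}{2}}\la^{\frac{n-1}{2}})$, is too weak to give \eqref{77}. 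Interpolating it against the $L^2\to L^2$ bound $O(T^{-1}\mu)$ gives, as you note, an $L^{q_c'}\to L^{q_c}$ bound $O(T^{-1}\la^{2/q_c})$ for each dyadic $\mu=2^j\le T$ that is independent of $\mu$; but then summing the $O(\log T)$ dyadic pieces yields $O(T^{-1}\log T\cdot \la^{2/q_c})=O(\la^{2/q_c}(\log\la)^{-1}\log\log\la)$, not $O(\la^{2/q_c}(\log\la)^{-1})$. After the $TT^*$ step and the interpolation chain of Proposition~\ref{prop3} this costs a factor $(\log\log\la)^{1/(n+1)}$, so \eqref{100} with $\alpha_n=1/(n+1)$ does not follow from what you wrote. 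The fix is to keep the full strength of the comparison theorem you invoke: by G\"unther/Rauch, ${\mathcal A}(t,\omega)\ge K^{-1}\sinh(Kt)$, so the leading coefficient satisfies $w_0(\tilde x,\tilde y)\le C_{K,N}\mu^{-N}$ for \emph{every} $N$ when $d_{\tilde g}(\tilde x,\tilde y)\approx\mu$ (this is \eqref{n.17}--\eqref{n.18}); carrying this through gives $\|Q_\nu \tilde G_{\la,\mu}\|_{L^1\to L^\infty}\le C_{K,N}T^{-1}\la^{\frac{n-1}{2}}\mu^{-N}$ as in \eqref{n.12}, the interpolated dyadic bounds then acquire a summable extra factor $\mu^{-1}$ as in \eqref{n.10}, and the $\log T$ loss disappears.

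A secondary issue: your claim that only $O(1)$ lifts of $\tilde y$ are microlocally relevant is not justified by shrinking $c_0$. The Toponogov-type argument shows the relevant lifts are those $\alpha$ with $\alpha(D)$ meeting a tube of \emph{fixed} radius $R$ about the lifted geodesic $\tilde\gamma$, and along a segment of length $\approx\mu$ there are $O(\mu)$ such translates of the fundamental domain (the remaining $O(\la^{1/2})$ terms contribute $O(\la^{-1/2})$ in total). The smallness of $c_0$ controls the total number of lifts at distance $\lesssim T$ and the microlocal spreading, not the count inside the tube. Once the exponential decay of $w_0$ is retained, the difference between $O(1)$ and $O(\mu)$ is harmless, but as written this step is not correct.
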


Note that when $n=2$, $q_c=6$, and so \eqref{100} says that
\begin{equation}\label{116}
\|\rho_\la f\|_{L^6(M)} \lesssim \la^{\frac16} \, (\log\la)^{-\frac13} \, \|f\|_{L^2(M)}, \quad n=2,
\end{equation}
while
when $n=3$, $q_c=4$, and so \eqref{100} says that
\begin{equation}\label{117}
\|\rho_\la f\|_{L^{4}(M)} \lesssim \la^{\frac1{4}} (\log\la )^{-\frac1{4}} \, \|f\|_{L^2(M)}, \quad n=3.
\end{equation}

Estimates \eqref{116} and \eqref{117} 
should be compared to bounds in Hickman~\cite{Hickman} and Germain and Myerson~\cite{Germain} who extended the toral
eigenfunction estimates of Bourgain and Demeter~\cite{BourgainDemeterDecouple}.
As we shall see in \S\ref{g}, the three-dimensional estimates \eqref{117} would be optimal if $M$ were a product manifold with
$S^1$ as a factor; however, in this case, $M$ cannot have all sectional curvatures negative, since some would have to equal zero.
Perhaps not unexpectedly, in two-dimensions, the bounds in \eqref{116} are stronger than the sharp estimates that we  have
obtained for two-dimensional tori for spectral windows of width $(\log\la)^{-1}$.  It would be interesting to determine whether or not
this phenomena persists in higher dimensions.


\noindent{\bf Proof of $L^2\to L^{q_c}$ Kakeya-Nikodym estimates assuming negative curvature}

The main step in the proof of \eqref{77} will be to establish certain pointwise estimates using an argument that is almost identical
one in \cite{BlairSoggeToponogov}.  Basically, the only difference is that, here, we shall exploit the fact that, if all the sectional curvatures of
$(M,g)$ are negative, then the leading term in the Hadamard parametrix for the solution of the wave equation in the universal
cover of $(M,g)$ decays exponentially in terms of the time parameter.  This fact was not used by two of us in \cite{BlairSoggeToponogov}
or its predecessor, Sogge and Zelditch \cite{SoggeZelditchL4},  since we were interested in results for general manifolds of non-positive
curvature and the techniques in these papers or \cite{SBLog} only provided somewhat modest improvements for spectral projection estimates.

Turning to the proof of \eqref{77}, just as in \cite{BlairSoggeToponogov}, we shall want to make use of the fact that if $Q_\nu(x,y)$ denotes the
kernels of our microlocal cutoffs there, then we have the uniform bounds
\begin{equation}\label{n.1} \sup_x \int|Q_\nu(x,y)|\, dy \le C.
\end{equation}
This implies that $Q_\nu: \, L^\infty(M)\to L^\infty(M)$ uniformly, and, since they are also uniformly bounded on $L^2(M)$, by interpolation and duality,
we conclude that
\begin{multline}\label{n.2}
\|Q_\nu\|_{L^p(M)\to L^p(M)}=O(1), \quad 2\le p\le \infty \\ \text{and } \, \, 
\|Q^*_\nu\|_{L^p(M)\to L^p(M)}=O(1), \quad 1\le p\le 2.
\end{multline}

Also, of course if, as in \eqref{14}, $\Psi=\rho^2$, then \eqref{77} is equivalent to the estimate
\begin{equation}\label{n.3}
\bigl\|\, Q_\nu \Psi(T(\la-P))Q^*_\nu\, \bigr\|_{L^{q_c'}(M)\to L^{q_c}(M)} \lesssim \la^{2/q_c} \, (\log\la)^{-1},
\end{equation}
if, as before,
\begin{equation}\label{n.4}
T=c_0\log\la,
\end{equation}
with $c_0>0$ sufficiently small.

By \eqref{n.2}, this would be a consequence of 
\begin{equation}\label{n.5}
\bigl\| \, Q_\nu \Psi(T(\la-P))\, \bigr\|_{L^{q_c'}(M)\to L^{q_c}(M)}\lesssim \la^{2/q_c} \, (\log\la)^{-1}.
\end{equation}
The proof of this will closely follow the related $L^2\to L^2$ Kakeya-Nikodym estimate (3.6) in \cite{BlairSoggeToponogov}.

First, as was done in the proof of Proposition~\ref{prop2}, let us split our smoothed out spectral projection operators 
$\Psi(T(\la-P))$ into the sum of a ``local'' and ``global'' part.  This time, since we shall want to dyadically decompose
the ``global'' piece, let us assume, as before, that our Littlewood-Paley function $\beta\in C^\infty_0((1/2,2))$ satisfies
$\sum_{j=-\infty}^\infty \beta(t/2^j)=1$, $t>0$.  We then let
$$\beta_0(t)=1-\sum_{j=0}^\infty \beta(j/2^j)\in C^\infty_0({\mathbb R}_+),$$
and note that $\beta_0(t)\equiv 1$ for $t>0$ near the origin.  The ``local'' operator then is
$$L_\la =\frac1{2\pi T} \int e^{i\la t} e^{-itP} \beta_0(|t|) \, \Hat \Psi(t/T) \, dt,$$
and we have
$$\|Q_\nu L_\la\|_{L^{q'_c}(M)\to L^{q_c}(M)}=O(\la^{2/q_c} \, (\log\la)^{-1})$$
using \eqref{n.2}, \eqref{n.3} and the universal local spectral projection estimates in \cite{sogge88} just as we did
in the proof of Proposition~\ref{prop2}.

As a result of these local bounds, we would have \eqref{n.5} and be done if we could show that
\begin{equation}\label{n.6}
\|Q_\nu G_\la\|_{L^{q'_c}(M)\to L^{q_c}(M)} =O(\la^{2/q_c}\, (\log\la)^{-1}),
\end{equation}
if
\begin{equation}\label{n.7}
G_\la =\frac1{2\pi T}\int e^{i\la t} e^{-itP} \, \bigl(1-\beta_0(|t|)\bigr) \, \Hat \Psi(t/T) \, dt.
\end{equation}
Note that if we replace $e^{-itP}$ by $e^{itP}$ here, then the resulting operator has a kernel which
is $O(\la^{-N})$ and so trivially enjoys the bounds in \eqref{n.6}.

Consequently, by Euler's formula, it suffices to show that if
\begin{equation}\label{n.8}
\tilde G_\la =\frac1{\pi T}\int e^{i\la t} \cos(t\sqrt{-\Delta_g}) \, \bigl(1-\beta_0(|t|)\bigr) \, \Hat \Psi(t/T) \, dt,
\end{equation}
then
\begin{equation}\label{n.9}
\|Q_\nu \tilde G_\la\|_{L^{q'_c}(M)\to L^{q_c}(M)}=O(\la^{2/q_c} \, (\log\la)^{-1}).
\end{equation}
If we let for $\mu=2^j\ge1$
\begin{equation}\label{n.88}
\tilde G_{\la,\mu} =\frac1{\pi T}\int e^{i\la t} \cos(t\sqrt{-\Delta_g}) \, \beta(|t|/\mu) \, \Hat \Psi(t/T) \, dt,
\end{equation}
then since
$$\tilde G_\la =\sum_{\{\mu=2^j: \, j\ge0\}} \tilde G_{\la,\mu},$$
in order to prove \eqref{n.9}, it suffices to show that
\begin{equation}\label{n.10}
\|Q_\nu \tilde G_{\la,\mu}\|_{L^{q'_c}(M)\to L^{q_c}(M)}\le C\la^{2/q_c} \, (\log\la)^{-1} \, \mu^{-1}.
\end{equation}
We note that since $\Hat \Psi$ is compactly supported, by Huygens' principle, $\tilde G_{\la,\mu}=0$ if $\mu$ is larger than
a fixed multiple of $T$ as in \eqref{n.4}. 

As before, we shall prove \eqref{n.10} using
interpolation.
To this end, we first note that since the Fourier transform of
$$t\to T^{-1}\beta(|t|/\mu) \Hat \Psi(t/T)$$
is $O(\mu/T)$, by \eqref{n.2} and orthogonality, we have
\begin{equation}\label{n.11}
\|Q_\nu \tilde G_\la\|_{L^{2}(M)\to L^{2}(M)}=O(\mu/T).
\end{equation}

The other estimate that we shall use, which requires our curvature assumption, is that
\begin{equation}\label{n.12}
\|Q_\nu \tilde G_\la\|_{L^{1}(M)\to L^{\infty}(M)}\le C_{K,N}T^{-1}\la^{\frac{n-1}2} \mu^{-N}, \quad N=1,2,\dots,
\end{equation}
if all of the sectional curvatures of $(M,g)$ are assumed to be $\le -K^2<0$.  This is analogous to
(3.8$'$) in \cite{BlairSoggeToponogov} which did not include the rapid decay $\mu^{-N}$ since the
curvatures there were merely assumed to be non-positive.  Also, there, we did not break things up dyadically as we have done so here.

By a simple interpolation argument (as in the proof of \eqref{7} above), \eqref{n.11} and \eqref{n.12} imply \eqref{n.10}.
So, we have reduced matters to establishing \eqref{n.12}.

Just as in \cite{BlairSoggeToponogov} and other works, we have switched from the half-wave operator in \eqref{n.7} to
the use of $\cos t\sqrt{-\Delta_g}$ so that we can use the Hadamard parametrix and the Cartan-Hadamard theorem to lift the
calculations that will be needed for \eqref{n.12} up to the universal cover $(\Rn, \tilde g)$ of $(M,g)$.  This approach was used 
earlier in \cite{BlairSoggeToponogov} and \cite{SoggeZelditchL4}.

To this end, let $\{\alpha\}=\Gamma$ denote the group of deck transformation preserving the associated 
covering map $\kappa: \, \Rn\to M$ coming from the exponential map at the point in $M$ with coordinates $0$ in $\Omega$ above.
The metric $\tilde g$ on $\Rn$ that we mentioned above is the pullback of $g$ via $\kappa$.  Choose a Dirichlet domain $D\simeq
M$ for $M$ centered at the lift of the point with coordinates $0$.

Following \cite{Berard},  \cite{SoggeZelditchL4} and others, we recall that if $\tilde x$ denotes the lift of $x\in M$ to $D$, then we have the following formula
$$\bigl(\cos t\sqrt{-\Delta_g}\bigr)(x,y)
=\sum_{\alpha\in \Gamma}\bigl(\cos t\sqrt{-\Delta_{\tilde g}}\bigr)(\tilde x, \alpha(\tilde y)),$$
which is a cousin of the classical Poisson summation formula.  As a result, for later use, 
if $K_{\la,\mu}(x,y)$ denotes the kernel of $\tilde G_{\la,\mu}$ then we have the formula
\begin{multline}\label{n.13}
K_{\la,\mu}(x,y)=\sum_{\alpha\in \Gamma}K_{\la,\mu}(\tilde x, \alpha(\tilde y)),
\\
\text{if } \, \, 
K_{\la,\mu}(\tilde x,\tilde y)=\frac1{\pi T} \int e^{i\la t} \, \bigl(\cos t\sqrt{-\Delta_{\tilde g}}\bigr)(\tilde x,\tilde y)
\, \beta(|t|/\mu) \, \Hat \Psi(t/T) \, dt.
\end{multline}
The key estimate for us, which is an improvement of (3.8) in \cite{BlairSoggeToponogov} and requires our
curvature assumption is that
\begin{equation}\label{n.14}
|K_{\la,\mu}(\tilde x,\tilde y)|\le C_{K,N} T^{-1} \, \la^{\frac{n-1}2} \, 
\bigl(d_{\tilde g}(\tilde x,\tilde y)\bigr)^{-\frac{n-1}2} \mu^{-N}, \quad N=1,2,\dots.
\end{equation}

To prove this, we note that this kernel vanishes if $d_{\tilde g}(\tilde x,\tilde y)$ is larger than $2\mu$  by the Huygens principle
since $\beta(|t|/\mu)=0$ for $|t|\ge 2\mu$.  Also, since this function also vanishes for $|t|\le \mu/2$, it is straightforward to use
the Hadamard parametrix to see that $K_{\la,\mu}(\tilde x,\tilde y)=O(\la^{-N})$ for every $N$ if
$d_{\tilde g}(\tilde x,\tilde y)\le \mu/4$, which is better than the bounds in \eqref{n.14} as
$\mu\lesssim \log \la \ll \la$.

Due to these simple facts, to prove \eqref{n.14}, we may assume that $d_{\tilde g}(\tilde x,\tilde y)\approx \mu$.  We shall
make use of  the Hadamard parametrix for $\cos t\sqrt{-\Delta_{\tilde g}}$.  As is well known and described, for instance,  in
\cite{Berard}, \cite{SoggeHangzhou} and \cite{SoggeZelditchL4}, the leading term in this parametrix is of the form
\begin{equation}\label{n.15}
w_0(\tilde x,\tilde y) \cdot (2\pi)^{-n}\int_{\Rn}e^{id_{\tilde g}(\tilde x,\tilde y)\xi_1}
\, \cos(t|\xi|) \, d\xi,
\end{equation}
where in geodesic normal coordinates about $\tilde x$  this leading coefficient is given by
\begin{equation}\label{n.16}
w_0(\tilde x,\tilde y)=\bigl(\, \text{det }\tilde g_{_{ij}}(\tilde y)\, \bigr)^{-1/4}.
\end{equation}
Thus, if in geodesic polar coordinates the volume element is given by
$$dV_{\tilde g}(\tilde y)=\bigl(\, {\cal A}(t,\omega)\, \bigr)^{n-1} \, dt d\omega, \, \, \,
t=d_{\tilde g}(\tilde x,\tilde y),$$
then
$$w_0(\tilde x,\tilde y)=\bigl( \, t/{\mathcal A}(t,\omega)\, \bigr)^{\frac{n-1}2}.$$
By the classical G\"unther comparison theorem from Riemannian geometry (see \cite[\S III.4]{ChavelRiemannianGeometry})
\begin{equation}\label{n.17}
{\mathcal A}(t,\omega)\ge \frac1K \, \sinh(Kt),
\end{equation}
if, as we are assuming all the sectional curvatures are $\le -K^2<0$.
Thus,
\begin{equation}\label{n.18}
w_0(\tilde x,\tilde y)\le C_{K,N} \mu^{-N} \quad \text{if } \, \, \,
d_{\tilde g}(\tilde x,\tilde y)\approx \mu.
\end{equation}

Also, a routine stationary phase calculation yields
\begin{multline}\label{n.19}
T^{-1}\iint e^{i\la t} e^{id_{\tilde g}(\tilde x,\tilde y)\xi_1}
\, \cos(t|\xi|) \,  \beta(|t|/\mu) \, \Hat \Psi(t/T) \, dt d\xi
\\
=O(T^{-1}\la^{\frac{n-1}2} \, (d_{\tilde g}(\tilde x,\tilde y))^{-\frac{n-1}2}),
\quad
\text{if } \, \, d_{\tilde g}(\tilde x,\tilde y) \approx \mu \ge 1.
\end{multline}

Due to \eqref{n.18} and \eqref{n.19}, if we replace $(\cos t\sqrt{-\Delta_{\tilde g}})(\tilde x,\tilde y)$ in
\eqref{n.13} with the leading term \eqref{n.15} in its Hadamard parametrix, the resulting expression will
satisfy the bounds in \eqref{n.14}.  Since one can also use stationary phase to see that the lower order terms in the 
Hadamard parametrix (see, e.g., \cite[Theorem 2.4.1]{SoggeHangzhou}) will each make contributions
which are $O(\la^{\frac{n-1}2-1})$ which is better than the bounds posited in \eqref{n.14},  we obtain this bound.

Unfortunately, although \eqref{n.14} is sufficient for obtaining bounds like \eqref{2}, it, by itself, can not be sufficient for
proving \eqref{n.12} since there are $\exp(c\mu)$ terms in the sum in \eqref{n.13} with $d_{\tilde g}(\tilde x,\tilde y)\approx \mu$.
 To prove \eqref{n.12}, as in \cite{BlairSoggeToponogov}, we need to use the fact that \eqref{n.12} includes the microlocal
cutoff which means that the kernel of $Q_\nu \tilde G_{\la,\mu}$,
\begin{equation}\label{n.20}
\sum_{\alpha\in \Gamma}\frac1{\pi T} \int
e^{i\la t} \, \bigl(Q_\nu \cos t\sqrt{-\Delta_{\tilde g}}\bigr)(\tilde x, \alpha(\tilde y)) \, \beta(|t|/\mu) \, 
\Hat \Psi(t/T)\, dt,
\end{equation}
only involves $O(\mu)$ non-trivial terms.  In the above $Q_\nu$ denotes the pullback of the operator $Q_\nu$ to  the fundamental domain $D$.

To see this we shall argue exactly as in \cite{BlairSoggeToponogov}.  We first recall that through the point $x\in \Omega$ there
is a unique geodesic $\gamma_{x,\nu}$ which passes through the plane $z_n=0$ with covector $\xi_\nu$ at this point in the plane.
After modifying the coordinates in $\Omega$, we may assume that $z'=0$ at the intersection point in the plane.
Then, as in \cite{BlairSoggeToponogov}, we let $\tilde \gamma(t)$, $t\in \R$ denote the lift of the projection of the geodesic
$\gamma$ in $\Omega$ to the universal cover and
$${\mathcal T}_R(\tilde \gamma)=\{\tilde z: \, d_{\tilde g}(\tilde \gamma, \tilde z)\le R\}$$
denote an $R$-tube about $\tilde \gamma$.  Then, just as in \cite{BlairSoggeToponogov}, if $R$ is fixed sufficiently large and
$\alpha(D) \cap {\mathcal T}_R(\tilde \gamma)
=\emptyset$, the corresponding summand in \eqref{n.20} is $O(\la^{-1})$ by Toponogov's theorem
and microlocal arguments.  Indeed, this is exactly how (3.9) in \cite{BlairSoggeToponogov} was proved and one can
simply repeat the arguments there.  Since there are $O(\la^{1/2})$ non-zero terms in \eqref{n.20} 
 if $c_0$ in \eqref{n.4} is fixed small enough, we obtain in this case
$$\sum_{\{\alpha: \, \alpha(D)\cap {\mathcal T}_R(\tilde \gamma)=\emptyset \}}
\frac1{\pi T} \int
e^{i\la t} \, \bigl(Q_\nu \cos t\sqrt{-\Delta_{\tilde g}}\bigr)(\tilde x, \alpha(\tilde y)) \, \beta(|t|/\mu) \, 
\Hat \Psi(t/T)\, dt =O(\la^{-1/2}).
$$
Therefore, the operator with this kernel maps $L^1(M)$ to $L^\infty(M)$ with norm $O(\la^{-1/2})$, which is much better than
the bounds in \eqref{n.12} as $\mu\le \log\la$.

Since \eqref{n.12} is equivalent to the statement that the kernel of $Q_\nu \tilde G_{\la,\mu}$ has sup-norm bounded by the
right side of \eqref{n.12}, it suffices to show that the remaining part of the sum in \eqref{n.20} where $\alpha(D)\cap
 {\mathcal T}_R(\tilde \gamma)\ne \emptyset$ can be bounded by the right side of \eqref{n.12}.  As above, the terms where
$d_{\tilde g}(\tilde x,\alpha(\tilde y))$ is not comparable to $\mu$ yield $O(\la^{-N})$ contributions and so can
be ignored.  Thus, since, by \eqref{n.2}, $Q_\nu: \, L^\infty \to L^\infty$, our task now would be complete if we could show that
\begin{multline}\label{n.21}
 \sum_{\{\alpha: \, \alpha(D)\cap {\mathcal T}_R(\tilde \gamma)\ne \emptyset, \, \text{and } \, d_{\tilde g}(\tilde x,\alpha(\tilde y))\approx \mu\}}
\frac1T \Bigl| \,  \int
e^{i\la t} \, \bigl( \cos t\sqrt{-\Delta_{\tilde g}}\bigr)(\tilde x, \alpha(\tilde y)) \, \beta(|t|/\mu) \, 
\Hat \Psi(t/T)\, dt \, \Bigr|
\\
\le C_{K,N}T^{-1}\, \la^{\frac{n-1}2} \, \mu^{-N}, \quad N=1,2,\dots .
\end{multline}
It is easy to see directly or by arguments in \cite{BlairSoggeToponogov}, this sum only involves $O(\mu)$ terms.  By
\eqref{n.14},  each is of these is $O(T^{-1}\la^{\frac{n-1}2}\mu^{-N})$ for any $N$, which yields \eqref{n.21} and completes
the proof.

\noindent {\bf Remark:}  This argument also yields the
microlocal Kakeya-Nikodym estimate \eqref{7} in which
we were merely assuming that $(M,g)$ has non-positive
curvature.  In this case, the leading coefficient of 
the Hadamard parametrix is no longer rapidly decreasing.  So,
we cannot use \eqref{n.18}.  However, we do have that
$w_0$ is bounded also by the G\"unther comparison
theorem.  If we repeat the above arguments, we would
see that, up to an $O(T^{-1}\la^{\frac{n-1}2}
\mu^{-N})$ error, the kernels $K_{\la,\mu}$ in 
\eqref{14} would be dominated by the left side of
\eqref{n.21}.  Since $w_0$ is bounded, by \eqref{n.19},
each summand is $O(T^{-1}\la^{\frac{n-1}2} \mu^{-\frac{n-1}2})$ since $d_{\tilde g}(\tilde x,\alpha(\tilde y))\approx \mu$ in the sum.  Since there are $O(\mu)$
terms, we conclude that the left side of \eqref{n.21}
under the assumption of non-positive curvature is
$O(T^{-1}\la^{\frac{n-1}2}\mu^{1-\frac{n-1}2})$, and
so we obtain \eqref{14}.

\newsection{Some sharp estimates on tori}\label{torus}

In this section, we shall see how we can modify the previous arguments to get the following sharp improved spectral
projection bounds on  tori.

\begin{theorem}\label{thm3}
Let $n\ge 2$, $\T^n=\R/(\ell_1\mathbb{Z})\times\dots \times \R/(\ell_n\mathbb{Z})$ denote the rectangular torus with $\ell_i\ge1$.
Then,
\begin{equation}\label{4.1}
\|\rho(T(\la-P))f\|_{\lqct} \lesssim \la^{1/q_c} \, T^{-1/q_c}\|f\|_2, \,\,\,\,\,\forall\,\, 1\le T\le \la^{\frac{1}{n+3}-\epsilon}.
\end{equation}
\end{theorem}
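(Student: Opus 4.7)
The plan is to follow the framework used to prove Theorem~\ref{thm1}, but exploit two special features of the torus. First, the deck transformation group $\Gamma=\ell_1\mathbb{Z}\times\dots\times\ell_n\mathbb{Z}$ is a lattice, so the number of $\alpha\in\Gamma$ with $|\alpha|\approx \mu$ grows polynomially as $O(\mu^{n-1})$ rather than exponentially as on generic non-positively curved manifolds. Second, on the flat torus the microlocal cutoffs $Q_\nu$ may be chosen as Fourier multipliers that commute exactly with $e^{-itP}$, so all the commutator terms in \eqref{commute}, \eqref{star1} and \eqref{star2} vanish identically. This is the ``favorable commutator property'' alluded to in the introduction, and it is what ultimately allows $T$ to be taken much larger than $\log\lambda$.

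Concretely, I would begin by proving a polynomial-in-$T$ analog of \eqref{2}. Lifting to the universal cover and summing the Hadamard-parametrix contribution $O(\lambda^{(n-1)/2}\mu^{-(n-1)/2})$ over the $O(\mu^{n-1})$ lattice translates with $|\alpha|\approx \mu$, and then dyadically in $\mu\lesssim T$, gives
\[
\|G_\lambda\|_{L^1(\mathbb{T}^n)\to L^\infty(\mathbb{T}^n)}\lesssim \lambda^{(n-1)/2}\,T^{(n-3)/2},
\]
which replaces the exponential $\exp(C_0T)$ growth used on general manifolds. I would then raise the large-height threshold in \eqref{i.19}--\eqref{i.20} from $\lambda^{(n-1)/4+1/8}$ to $H\approx \lambda^{(n-1)/4}T^{(n-3)/4}$ plus a small power-of-$\lambda$ buffer, so that the Bourgain-style $TT^*$ argument of Proposition~\ref{prop2} still closes and yields a large-height bound of order $\lambda^{1/q_c}T^{-1/2}$, which is stronger than the target $\lambda^{1/q_c}T^{-1/q_c}$.

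For the small-height step, I would establish an improved microlocal Kakeya--Nikodym estimate
\[
\sup_\nu \|Q_\nu\rho_\lambda\|_{L^2(\mathbb{T}^n)\to L^{q_c}(\mathbb{T}^n)} \lesssim \lambda^{1/q_c}\,T^{-(n-1)/4}
\]
by adapting the dyadic argument of \S4. The crucial gain here is that each microlocal kernel $K_{\lambda,\mu}$ is essentially supported in a $\lambda^{-7/8}$-tube around the geodesic through $\tilde x$ in direction $\nu$, and on $\mathbb{R}^n$ such a tube of length $\mu$ meets $\Gamma$ in only $O(1)$ points for $\mu$ in the allowed range, rather than the $O(\mu^{n-1})$ lattice points encountered without microlocalization. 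Interpolating the resulting $L^1\to L^\infty$ bound with the trivial $L^2\to L^2$ bound and summing dyadically delivers the claimed $T^{-(n-1)/4}$ factor. Substituting $\epsilon(\lambda)=T^{-(n-1)/4}$ into the proof of Proposition~\ref{prop3}, so that $\epsilon(\lambda)^{2/(n+1)}=T^{-1/q_c}$, and then running the bilinear argument exactly as in \S4 yields the desired sharp bound $\lambda^{1/q_c}T^{-1/q_c}$.

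The main obstacle I expect is verifying that the bilinear machinery from \eqref{far} and Lemma~4.2 of \cite{SBLog} still closes with the inflated height $H\approx \lambda^{(n-1)/4}T^{(n-3)/4}$. The Lee off-diagonal estimate contributes a fixed $\lambda^{-\delta_n}$ saving which the extra $T^{(n-3)/4}$ in $H$ partly consumes, and further powers of $T$ must be controlled from the interaction between the $\lambda^{-1/8}$-scale microlocalization and the much longer time of propagation $\mu\lesssim T$. Balancing these competing powers of $T$ and $\lambda$ is precisely what forces the restriction $T\le \lambda^{1/(n+3)-\epsilon}$ in the statement. Once this balance is achieved, the torus-specific vanishing of all commutators allows the remainder of the proof to proceed without any of the $\lambda^{-1/4}$ or $\lambda^{1/q_c-1/4}$ losses that appear in the general case.
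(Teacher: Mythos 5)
Your overall architecture (polynomial rather than exponential growth of the lattice sum, plus exact commutation on the flat torus) is the right starting point, but the key lemma you rely on for the small heights is false, and two of your lattice-point counts are off. First, the number of $\alpha\in\Gamma$ with $|\alpha|\approx\mu$ is $O(\mu^{n})$, not $O(\mu^{n-1})$, so the correct unlocalized bound is $\|G_\la\|_{L^1\to L^\infty}\lesssim (T\la)^{\frac{n-1}2}$ and the height threshold must be taken $\approx (T\la)^{\frac{n-1}4}$, not $\la^{\frac{n-1}4}T^{\frac{n-3}4}$; your large-height argument would not close at your lower threshold. Second, and more seriously, your proposed microlocal Kakeya--Nikodym estimate $\sup_\nu\|Q_\nu\rho_\la\|_{L^2\to L^{q_c}}\lesssim \la^{1/q_c}T^{-\frac{n-1}4}$ cannot hold: since $\tfrac{n-1}4>\tfrac1{q_c}=\tfrac{n-1}{2(n+1)}$ for $n\ge2$, it would beat the sharp lower bound. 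Indeed the extremizing quasimode of Proposition~\ref{productmfld} (a Gaussian-beam-type function with Fourier support in an angular sector of width $(\la T)^{-1/2}$, which is far smaller than either $\la^{-1/8}$ or $T^{-1}$ in the admissible range of $T$) is reproduced by a single $Q_\nu$ and by $\rho_\la$, so $\sup_\nu\|Q_\nu\rho_\la\|_{L^2\to L^{q_c}}\gtrsim\la^{1/q_c}T^{-1/q_c}$, and no better gain is possible. Your heuristic that the microlocalized kernel only sees $O(1)$ lattice points is the source of the error: the angular localization at scale $\mu^{-1}$ confines the relevant translates to a tube of length $\mu$ and width $O(1)$, and such a tube (e.g.\ along a rational direction) contains $\approx\mu$ lattice points. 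That $O(\mu)$ count is exactly what yields the bound \eqref{tnn.9} and hence the correct, and only attainable, gain $T^{-1/q_c}$ in \eqref{globalqnu}.

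With the correct gain $\e(\la)=T^{-1/q_c}$, running the general-case summation step of Proposition~\ref{prop3} (the interpolation $\sup_\nu^{2/(n+1)}$ against the $\ell^2$ sum) only gives $\la^{1/q_c}T^{-\frac{2}{(n+1)q_c}}$, which falls short of the sharp exponent. The paper closes this deficit differently: it takes the angular sectors at scale $T^{-1}$ (not $\la^{-1/8}$), and introduces auxiliary sharp Fourier-sector projections $\tilde Q_\nu$ as in \eqref{qnu1} which commute \emph{exactly} with $\rho_\la$ and $\sigma_\la$. Writing $Q_\nu\rho_\la f= Q_\nu\rho_\la\tilde Q_\nu f+O(\la^{-N})$, each term in the $\ell^{q_c}$ sum is bounded by $\la^{1/q_c}T^{-1/q_c}\|\tilde Q_\nu f\|_2$ with the \emph{full} Kakeya--Nikodym gain, and then $\sum_\nu\|\tilde Q_\nu f\|_2^2\lesssim\|f\|_2^2$ finishes the argument with no interpolation loss. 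Note also that the genuine $Q_\nu$ contain a spatial cutoff $\psi(x)$ and so do not literally commute with $e^{-itP}$; the commutators in \eqref{commutetori} are small but nonzero, and the exact commutation is only available for the pure Fourier multipliers $\tilde Q_\nu$. Your proposal needs to be restructured along these lines: the sharp $T$-dependence comes from the commutation trick in the $\nu$-summation, not from an improved per-sector estimate.
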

Here $\e$ is an arbitrary small constant, and for convenience we are assuming $\ell_i\ge1$, which implies that the injectivity radius of the torus is bounded below by $\frac12$. The implicit constants in \eqref{4.1} may depend on $\ell_i$ and $\e$, but do not depend on $T$ and $\la$. As we shall see later in Proposition~\ref{productmfld},
the  $ T^{-1/q_c}$ gain in the above estimate is sharp. 

To prove \eqref{4.1}, if we define 
$\tilde \rho_\la$ as in \eqref{i.15}, by arguing as before, it suffices to show that
\begin{equation}\label{4.2}
\|\tilde \rho_\la f\|_{L^{q_c}(\T^n)}\le \la^{1/q_c}T^{-1/q_c}\|f\|_2.
\end{equation}

\begin{proposition}\label{prop4.1}  Let 
$$A_+=\{x\in \T^n: \, |\tilde \rho_\la f(x)|\ge C(T\la)^{\frac{n-1}4}\, \} \subset \T^n,$$
where $C$ is a large enough constant which may depend on $\ell_i$.  Then,
\begin{equation}\label{4.3}
\|\tilde \rho_\la f\|_{\lqc(A_+)}\le C\laq \,T^{-1/2} \, \|f\|_{L^2(\T^n)}.
\end{equation}
\end{proposition}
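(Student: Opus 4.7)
The plan is to adapt the proof of Proposition~\ref{prop2}, with the main new ingredient being that on a torus the global part of the smoothed spectral projection kernel grows only polynomially in $T$ rather than exponentially, and this is precisely what allows $T$ to be taken much larger than $\log\la$. As in that earlier argument, I would first use the bound $\|(I-\sigma_\la)\circ \rho_\la\|_{L^2\to L^{q_c}}\lesssim T^{-1}\la^{1/q_c}$ from \eqref{tilde} (already better than what we need) to reduce matters to estimating $\|B\rho_\la f\|_{L^{q_c}(A_+)}$. Then I would dualize by choosing $g$ with $\|g\|_{L^{q_c'}(A_+)}=1$ realizing this norm, apply Cauchy--Schwarz with $f$ exactly as in \eqref{4}, and split the middle factor $B\circ\Psi(T(\la-P))\circ B^*$ by writing $\Psi(T(\la-P))=L_\la+G_\la$, with $L_\la$ the local piece carrying the $a(t)$-cutoff. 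This yields the familiar decomposition $\|B\rho_\la f\|_{L^{q_c}(A_+)}^2\le |I|+|II|$.

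The local contribution is handled exactly as in \eqref{5}: by the universal bounds of \cite{sogge88} one has $\|L_\la\|_{L^{q_c'}\to L^{q_c}}\lesssim T^{-1}\la^{2/q_c}$, so together with the uniform $L^p$ bounds \eqref{i.b3} for $B$ we obtain $|I|\lesssim T^{-1}\la^{2/q_c}$. Taking the square root produces precisely the desired factor $T^{-1/2}\la^{1/q_c}$ in \eqref{4.3}, and the real work lies in controlling $|II|$.

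For the global piece, the main step is a pointwise kernel bound for $G_\la$ that replaces \eqref{2} in the torus setting. Lifting $(e^{-itP})(x,y)$ to the universal cover via Poisson summation, one writes
\[
G_\la(x,y)=T^{-1}\sum_{\alpha\in\Gamma}\int(1-a(t))\,\Hat\Psi(t/T)\,e^{it\la}\,K_t(\tilde x,\tilde y+\alpha)\,dt,
\]
where $\Gamma=\ell_1\mathbb{Z}\times\cdots\times\ell_n\mathbb{Z}$ is the lattice of deck transformations and $K_t$ is the Euclidean half-wave kernel on $\R^n$. The support $\Hat\Psi(t/T)\subset[\delta T/2,\delta T]$ combined with Huygens' principle restricts the nontrivial summands to $O(T^n)$ translates with $|\tilde x-\tilde y-\alpha|\approx T$; on each such summand stationary phase in $t$ yields a bound $O(\la^{(n-1)/2}T^{-(n-1)/2})$, and the flat metric on $\T^n$ makes the leading Hadamard coefficient identically $1$ so no curvature-induced growth enters. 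Summing over these $O(T^n)$ summands and accounting for the $T^{-1}$ prefactor produces
\[
\|G_\la\|_{L^1(\T^n)\to L^\infty(\T^n)}\lesssim \la^{(n-1)/2}\,T^{(n-1)/2},
\]
a polynomial bound in $T$ in sharp contrast with the $\exp(C_0T)$ factor appearing in \eqref{2} on general manifolds of non-positive curvature.

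With this in hand, repeating the H\"older argument of Proposition~\ref{prop2} yields $|II|\lesssim\la^{(n-1)/2}T^{(n-1)/2}\,\|\1_{A_+}\|_{L^{q_c}(\T^n)}^2$. The threshold $C(T\la)^{(n-1)/4}$ in the definition of $A_+$ is chosen precisely so that $\|\1_{A_+}\|_{L^{q_c}}^2\le [C(T\la)^{(n-1)/4}]^{-2}\|\tilde\rho_\la f\|_{L^{q_c}(A_+)}^2$ yields the cancellation $\la^{(n-1)/2}T^{(n-1)/2}\cdot(T\la)^{-(n-1)/2}=1$; consequently $|II|\le (\mathrm{const})\cdot C^{-2}\,\|\tilde\rho_\la f\|_{L^{q_c}(A_+)}^2$, which for $C$ sufficiently large can be absorbed into the left-hand side exactly as in the proof of Proposition~\ref{prop2}. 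The main obstacle is establishing the polynomial kernel bound with the stationary-phase error terms controlled uniformly enough to sum over $O(T^n)$ lattice translates without degradation; this is where the flat geometry of $\T^n$ and the explicit lattice structure of the deck transformations play their essential role, and ultimately control how far $T$ can be pushed.
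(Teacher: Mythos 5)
Your proposal is correct and follows essentially the same route as the paper: the same reduction via \eqref{tilde} and \eqref{i.b3}, the same duality/Cauchy--Schwarz split into a local term $I$ (bounded by $T^{-1}\la^{2/q_c}$ via \cite{sogge88}) and a global term $II$, the same polynomial kernel bound $\|G_\la\|_{L^1\to L^\infty}\lesssim(T\la)^{\frac{n-1}2}$ obtained by Poisson summation over the lattice of deck transformations together with stationary phase, and the same absorption using the threshold $C(T\la)^{\frac{n-1}4}$ with $C$ large. The only cosmetic difference is that the paper decomposes dyadically in $|t|$ over scales $1\le\mu=2^j\le T$ (since $\Hat\Psi(t/T)$, being the transform of $\rho^2$, is supported in $|t|\lesssim T$ rather than concentrated at $|t|\approx T$ as you assert), getting $O(\mu^n)$ translates each of size $O(T^{-1}\la^{\frac{n-1}2}\mu^{-\frac{n-1}2})$ at scale $\mu$; the resulting geometric sum is dominated by $\mu\approx T$, so your final bound is unchanged.
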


\begin{proof}
We first note that, by \eqref{i.b3} and \eqref{tilde}, we have 
$$\| \tilde\rho_\la f\|_{L^{q_c}(A_+)}\le \|B\rho_\la f\|_{L^{q_c}(A_+)}+C\la^{\frac1{q_c}}/T,$$
since we are assuming that $f$ is $L^2$-normalized.  Thus, we would have \eqref{4.3} if we could show that
\begin{equation}\label{4.3'}
\|B\rho_\la f\|_{L^{q_c}(A_+)}\le C\la^{\frac1{q_c}}T^{-1/2}+\tfrac12 \|\tilde \rho_\la f\|_{L^{q_c}(A_+)}.
\end{equation}

If we repeat the argument in \eqref{4},  by duality,
\begin{align}\label{4.4}
\| B \rho_\la f\|^2_{L^{q_c}(A_+)}&\le\int \big(B\circ L_\la\circ B^*\big)\bigl(\1_{A_+}\cdot g\bigr)(x) \cdot \overline{\1_{A_+}(x) \, g(x)} \, dx \notag
\\
&\qquad \qquad + \int \big(B\circ G_\la\circ B^*\big)\bigl(\1_{A_+}\cdot g\bigr)(x) \cdot \overline{\1_{A_+}(x) \, g(x)} \, dx \notag
\\
&= I +II. \notag
\end{align}
Here, $G_\la$ is the operator whose kernel is in \eqref{2}, while $L_\la$ is the ``local'' operator
$$L_\la =(2\pi T)^{-1} \int a(t) \Hat \Psi(t/T) e^{it\la} e^{-itP}\, dt.$$
As before the local operator $L_\la$ satisfies
$$\|L_\la\|_{L^{q_c'}(\T^n) \to L^{q_c}(\T^n)} \lesssim T^{-1}\la^{2/q_c}.$$
By H\"older's inequality, we have
\begin{equation}\label{4.5}
    \begin{aligned}
    |I| &\le \bigl\|BL_\la B^*\bigr(\1_{A_+}\cdot g\bigr)\bigr\|_{q_c}\cdot\| \1_{A_+} \cdot g\|_{q_c'}
\\
&\lesssim \bigl\|L_\la B^*\bigr(\1_{A_+}\cdot g\bigr)\bigr\|_{q_c}\cdot\| \1_{A_+} \cdot g\|_{q_c'} 
\\
&\lesssim \la^{2/q_c}T^{-1}  \bigl\| B^*\bigr(\1_{A_+}\cdot g\bigr)\bigr\|_{q_c'}\cdot\| \1_{A_+} \cdot g\|_{q_c'} 
\\
&\lesssim \la^{2/q_c}T^{-1}\|g\|^2_{L^{q_c'}(A_+)} 
\\
&=\la^{2/q_c}T^{-1}. 
    \end{aligned}
\end{equation}

To handle the second term, we shall use the fact that  if $\beta\in C^\infty_0((1/2,2))$ and 
\begin{equation}
\label{4.6}
G^j_\la(x,y)=\frac1{2\pi} \int_{-\infty}^\infty \bigl(1-a(t)\bigr)\beta(t/2^j) \, T^{-1} \Hat\Psi(t/T) e^{it\la} \, \bigl(e^{-itP})(x,y)\, dt,
\end{equation}
then  the integral operator with this kernel maps $L^1\to L^\infty$ with norm
$O\bigl(T^{-1} \, 2^{\frac{n+1}2j} \la^{\frac{n-1}2}\bigr)$.
To see this, note that if we replace $e^{-itP}$ above by $e^{itP}$,  the  kernel 
is $O(\la^{-N})$ for all $N>0$. Thus by Euler's formula, it suffices to show that for any fixed $x,y$
\begin{equation}\nonumber
\big| \int_{-\infty}^\infty \bigl(1-a(t)\bigr)\beta(t/2^j) \, T^{-1} \Hat\Psi(t/T) e^{it\la} \, \bigl(\cos(tP))(x,y)\, dt\big|=O\bigl(T^{-1} \, 2^{\frac{n+1}2j} \la^{\frac{n-1}2}\bigr).
\end{equation}
We postpone the proof of this to \eqref{tnn.13}-\eqref{tnn.16} below.

As a result, since the dyadic operators $B$ are uniformly bounded on $L^1$ and $L^\infty$, we can
repeat the argument that we used to estimate $I$ to see that
\begin{equation}\label{4.7}
 \|G_\la\|_{L^{1}(\T^n) \to L^{\infty}(\T^n)}\le \sum_{1\le2^j\le T}
 \|G^j_\la\|_{L^{1}(\T^n) \to L^{\infty}(\T^n)}\lesssim   (T\la)^{\frac{n-1}2}.
\end{equation}
Then \eqref{4.7} yields
\begin{equation*}
|II|\le C_0(T\la)^{\frac{n-1}2} \, \bigl\| \1_{A_+} \cdot g\bigr\|_1^2 \le C_0(T\la)^{\frac{n-1}2} \,
\|g\|^2_{q_c'} \, \|\1_{A_+}\|_{q_c}^2
=C_0(T\la)^{\frac{n-1}2} \, \|\1_{A_+}\|_{q_c}^2.
\end{equation*}
If we recall the definition of $A_+$ and use Chebyshev's inequality we can estimate the last factor:
$$\|\1_{A_+}\|_{q_c}^2 \le \bigl(C(T\la)^{\frac{n-1}4 }\bigr)^{-2}\|\tilde \rho_\la f\|^2_{L^{q_c}(A_+)}.$$
By choosing the constant $C$ large enough such that $C^{-2}C_0\le \frac14$, we have,
$$|II|\le \tfrac14\| \tilde \rho_\la f\|^2_{L^{q_c}(A_+)}.
$$

If we combine this bound with the earlier one, \eqref{4.5}, for $I$ we conclude that
\eqref{4.3'} is valid.
\end{proof}

\begin{proposition}\label{prop4.3} Let 
\begin{equation}\label{4.8}
A_-=\{x\in \T^n: \, |\tilde \rho_\la f(x)|\le C(T\la)^{\frac{n-1}4}\, \} \subset \T^n.\end{equation}
Then, for any fixed $\e>0$ we have
\begin{equation}\label{4.9}
\|\tilde \rho_\la f\|_{\lqc(A_-)}\le C\laq \,T^{-1/q_c} \, \|f\|_{L^2(\T^n)}, \,\,\,\,\,\forall\,\, 1\le T\le \la^{\frac{1}{n+3}-\epsilon}.
\end{equation}
\end{proposition}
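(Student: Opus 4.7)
\textbf{Proof plan for Proposition~\ref{prop4.3}.}  The strategy is to reprise the two-ingredient approach of Proposition~\ref{prop3}---a microlocal $L^2\to L^{q_c}$ Kakeya--Nikodym estimate combined with the bilinear Whitney decomposition and H\"older/Young trick of \eqref{3.2}---but with every quantitative bound sharpened by exploiting the flat geometry of $\T^n$.  The decisive new feature is that when $\cos t\sqrt{-\Delta_g}$ is lifted to the universal cover $(\R^n,\delta)$, the sum over the deck-transformation lattice $\Gamma=\ell_1\Z\oplus\cdots\oplus\ell_n\Z$ grows only polynomially in $t$; after one further inserts the microlocal cutoff $Q_\nu$, the Toponogov-style tube argument of \cite{BlairSoggeToponogov} restricts the essential summands to those $\alpha\in\Gamma$ whose image lies in a fixed $R$-tube about the geodesic $\tilde\gamma_{x,\nu}$, of which there are only $O(\mu)$ at distance $\approx\mu$.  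This is the torus analog of the gain exploited in \S\ref{neg}, but is far stronger because there is no exponential spreading of geodesics.

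\textbf{Step 1 (toral microlocal Kakeya--Nikodym).}  I will first establish the torus analog of \eqref{7},
\begin{equation*}
\sup_\nu \|Q_\nu\,\rho(T(\la-P))\|_{L^2(\T^n)\to L^{q_c}(\T^n)} \;\lesssim\; \la^{1/q_c}\,T^{-(n-1)/4},
\end{equation*}
by repeating the dyadic argument behind \eqref{14}--\eqref{99}.  Splitting the Fourier inversion of $\Psi(T(\la-P))$ as $\sum_\mu\beta(|t|/\mu)$, each dyadic piece $K_{\la,\mu}(\tilde x,\tilde y)$ is bounded by $O(T^{-1}\la^{(n-1)/2}\mu^{-(n-1)/2})$ via stationary phase on the flat Hadamard parametrix, and the Toponogov argument keeps only $O(\mu)$ non-trivial lattice terms in \eqref{n.20}.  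Together with the orthogonality $L^2\to L^2$ bound $O(\mu/T)$ and interpolation, summing over dyadic $1\le\mu\le T$ yields the above K--N bound via $TT^*$.

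\textbf{Step 2 (bilinear reduction + interpolation).}  With the height threshold from \eqref{4.8} now reading $\|\tilde\sigma_\la h\|_{L^\infty(A_-)}\le C(T\la)^{(n-1)/4}$, I will rerun the Whitney decomposition of \S3 at an angular scale $\la^{-\theta}$ with $\theta$ to be chosen.  The diagonal term is handled exactly as in Lemma~4.2 of \cite{SBLog} and gives the $\ell^{q_c}$ square-function sum, while the far term obeys an estimate of the form \eqref{far} with $\la^{7/8}$ replaced by $\la^{1-\theta}$.  The H\"older/Young scheme of \eqref{3.2} then yields
\begin{equation*}
\|\tilde\rho_\la f\|_{L^{q_c}(A_-)} \;\lesssim\; \Bigl(\sum_\nu \|\tilde\sigma_\la Q_\nu\rho_\la f\|_{L^{q_c}}^{q_c}\Bigr)^{1/q_c} \;+\; \la^{1/q_c}T^{-1/q_c},
\end{equation*}
provided the far contribution is $\lesssim \la T^{-1}$.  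Interpolating $\ell^{q_c}\subset\ell^\infty\cdot\ell^2$, using the Step-1 bound for the $\ell^\infty$ piece and almost-orthogonality of $Q_\nu$ for the $\ell^2$ piece, produces a factor $\la^{1/q_c}\,T^{-(q_c-2)(n-1)/(4q_c)}=\la^{1/q_c}T^{-1/q_c}$, using the identity $q_c-2=4/(n-1)$.

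\textbf{Main obstacle.}  The delicate point is calibrating $\theta$.  The inflated $L^\infty(A_-)$ ceiling $(T\la)^{(n-1)/4}$ must be absorbed by the far-term improvement $\la^{-(n-1)(q_c-q)(1-2\theta)/4}$ at the cost of a factor $T^{(n-1)(q_c-q)/4}$; solving this inequality together with the requirement that the bilinear pseudodifferential calculus for $Q_\nu\in S^0_{1-\theta,\theta}$ remains usable (in particular, that the commutator $[\tilde\sigma_\la,Q_\nu]$ still satisfies an $L^2\to L^{q_c}$ bound of order $\la^{1/q_c-\delta}$, as in \eqref{commutee0}, which exploits the flat geometry so that the Egorov principal symbol is computable exactly) pins down the admissible range and yields precisely the threshold $T\le\la^{1/(n+3)-\e}$.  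Sharpness of this restriction will presumably be discussed separately via the product-manifold construction alluded to before Proposition~\ref{productmfld}.
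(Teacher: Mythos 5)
Your architecture (toral microlocal Kakeya--Nikodym bound plus the Whitney/bilinear scheme on $A_-$) is the same as the paper's, but there are two genuine gaps, and they conspire: each alone would leave you short of the sharp $T^{-1/q_c}$, and your arithmetic only closes because the first overclaims what the second then dissipates. First, the Kakeya--Nikodym gain you assert in Step 1, $\sup_\nu\|Q_\nu\rho_\la\|_{L^2\to L^{q_c}}\lesssim\la^{1/q_c}T^{-(n-1)/4}$, does not follow from the argument you sketch. Interpolating the dyadic kernel bounds $O(\mu/T)$ on $L^2\to L^2$ and $O(T^{-1}\la^{\frac{n-1}2}\mu^{1-\frac{n-1}2})$ on $L^1\to L^\infty$ gives the $L^{q_c'}\to L^{q_c}$ norm $O(T^{-1}\la^{2/q_c}\mu^{2/(n+1)})$ for each dyadic piece; summing over $\mu\le T$ yields $\la^{2/q_c}T^{-2/q_c}$ for the $TT^*$ operator, hence $T^{-1/q_c}$ (not $T^{-(n-1)/4}$) for $Q_\nu\rho_\la$. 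This is exactly the bound \eqref{globalqnu} the paper proves (and, incidentally, the $O(\mu)$ lattice count on the torus is obtained by direct integration by parts comparing the direction of $x-(y+j_\ell)$ with $\nu$, not by a Toponogov argument, which is a comparison-geometry tool with no content in the flat case).

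Second, with the correct gain $T^{-1/q_c}$, your Step 2 interpolation $\ell^{q_c}\subset\ell^\infty\cdot\ell^2$ --- using Kakeya--Nikodym only for the $\ell^\infty$ factor and almost-orthogonality for the $\ell^2$ factor --- produces $\la^{1/q_c}T^{-(q_c-2)/q_c^2}$, which is strictly weaker than the claimed $\la^{1/q_c}T^{-1/q_c}$ (e.g.\ $T^{-1/8}$ versus $T^{-1/4}$ when $n=3$). The torus-specific idea you are missing is the one advertised in the introduction as the ``favorable commutator properties'': the paper introduces sharp Fourier projectors $\tilde Q_\nu$ onto the $T^{-1}$-caps, which commute \emph{exactly} with $\rho_\la$ and $\sigma_\la$, shows $\|Q_\nu f-Q_\nu\tilde Q_\nu f\|_{L^p}=O(\la^{-N})\|f\|_2$, and then bounds each term of the $\ell^{q_c}$ sum by $\la^{1/q_c}T^{-1/q_c}\|\tilde Q_\nu f\|_2$; the conclusion follows from $\ell^2\hookrightarrow\ell^{q_c}$ and $\sum_\nu\|\tilde Q_\nu f\|_2^2\lesssim\|f\|_2^2$ with no interpolation loss. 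Finally, the Whitney decomposition must be run at the angular scale $T^{-1}$ (not a free $\la^{-\theta}$): the far-term gain is then $(\la T^{-1})^{\frac{n-1}2(q-q_c)}$, and balancing it against the ceiling $(T\la)^{\frac{n-1}4}$ on $A_-$ as $q\downarrow\frac{2(n+2)}n$ is precisely what produces the threshold $T\le\la^{\frac1{n+3}-\e}$; your ``calibration of $\theta$'' gestures at this computation but does not carry it out.
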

In order to make use of the bilinear estimates, we shall 
 work in the microlocalized modes. 
First, let $\nu$ 
range over a 
$T^{-1}$--separated
set in $S^{n-1}$ lying in a neighborhood
of $\xi_0=(0,\dots,0,1)$.  We then choose a
partition of unity $\sum\beta_\nu(\xi)=1$ on this set
which includes a neighborhood of the $\xi$-support of $B(x,\xi)$
so that each $\beta_\nu$ is supported in a $2T^{-1}$ cap about $\nu\in S^{n-1}$, and so that if 
$\beta_\nu(\xi)$ is the homogeneous of degree zero
extension of $\beta_\nu$ to $\Rn\backslash 0$ we have
$$|D^\alpha \beta_\nu(\xi)|\le C_\alpha
T^{|\alpha|} \quad \text{if } \, \,
|\xi|=1.$$
And as in \eqref{qnusymbol}, we define the symbols 
\begin{equation}\label{qnusymboltn}
q_\nu(x,\xi)=\psi(x) \, \beta_\nu\big(\Theta(x, \xi/p(x,\xi)\bigr) \, \tilde\beta\bigl(p(x,\xi)/\la\bigr).
\end{equation}
where 
$\psi \in C^\infty_0(\Omega)$ equals
one in a neighborhood of the $x$-support of $B(x,\xi)$,
and $\tilde \beta\in C^\infty_0((0,\infty))$ equals one in a neighborhood of the support of the Littlewood-Paley bump function
in \eqref{i.b2}. In particular, in a local coordinate chart of $\T^n$, we always have $p(x,\xi)=|\xi|$, which is independent of $x$. Also recall that as in \eqref{map'}, $\Theta(x, \xi/p(x,\xi)\bigr)$ is the third component function of the inverse of the map 
\begin{equation}\label{maptn}
(t,x',\eta)\to \chi_t(x',0,\eta)\in S^*M, \quad
(x',0,\eta)\in S^*M,
\end{equation}
in a neighborhood of $x'=0$, $t=0$
and $\eta=(0,\dots,0,1)$. And on the flat torus $\T^n$, we always have $
 \chi_t(x',0,\eta)=(x'+t\eta',t\eta_n, \eta)
$ for $(t,x',\eta)$ in this small neighborhood, thus  $\Theta(x, \xi/p(x,\xi)\bigr)=\xi/|\xi|$, which is also independent of $x$.
It then follows that, by rescaling, the pseudo-differential operators
$Q_\nu$ with these symbols have Schwartz kernel
\begin{equation}\label{qnu}
    Q_\nu(x,y)=\frac{\la^n}{(2\pi)^n}\int e^{i\la \langle x-y,\xi\rangle}\psi(x) \, \beta_\nu\big(\xi/|\xi|\bigr) \, \tilde\beta(|\xi|)d\xi,
\end{equation}
which satisfies 
\begin{equation}\label{tnn.1} \sup_x \int|Q_\nu(x,y)|\, dy \le C.
\end{equation}
This implies that $Q_\nu: \, L^\infty(M)\to L^\infty(M)$ uniformly, and, since they are also uniformly bounded on $L^2(M)$, by interpolation and duality,
we conclude that
\begin{multline}\label{tnn.2}
\|Q_\nu\|_{L^p(M)\to L^p(M)}=O(1), \quad 2\le p\le \infty \\ \text{and } \, \, 
\|Q^*_\nu\|_{L^p(M)\to L^p(M)}=O(1), \quad 1\le p\le 2.
\end{multline}
For later use, we shall record several other properties the of $Q_\nu$ operators. First, as in \eqref{commute} and \eqref{commutee0}, we can use the arguments in \cite{SBLog} to see that 
\begin{equation}\label{commutetori} 
\bigl\|  \tilde\sigma_\la Q_\nu -Q_\nu 
 \tilde\sigma_\la 
\bigr\|_{L^2(M)\to L^{q_c}(M)} \lesssim
\la^{\frac1{q_c}-\frac12}T^{2},
\end{equation}
Also \eqref{add} and \eqref{alor} hold for the $Q_\nu$ operators defined in \eqref{qnu}, as long as $\delta>0$ in \eqref{i.2.2} is small enough. That is 
\begin{equation}\label{addtn}
\tilde \sigma_\la -\sum_\nu \tilde \sigma_\la Q_\nu
=R_\la \quad
\text{where } \, \|R_\la\|_{L^p\to L^p}=O(\la^{-N})
\quad \forall \, N \, \, \text{if } \, \,
1\le p \le \infty,
\end{equation}
and
\begin{equation}\label{alortn}
\sum_\nu \|Q_\nu h\|^2_{L^2(M)}\le C\|h\|^2_{L^2(M)}.
\end{equation}
Furthermore, as
in \eqref{7}, we have the following
``global'' {\em ``$L^{q_c}$-microlocal Kakeya-Nikodym estimates":}
\begin{equation}\label{globalqnu}
\sup_\nu \|Q_\nu \rho_\la\|_{L^2(\T^n)\to L^{q_c}(\T^n)}\lesssim_{\delta_0} \la^{\frac1{q_c}} \cdot T^{-\frac1{q_c}},\,\,\,\forall\,1\le T\le \la^{1-\delta_0}.
\end{equation}
Here $\delta_0$ is a fixed constant that can be arbitrarily small,  the condition $ T\le \la^{1-\delta_0}$ ensures that $\la^{-1}T\le \la^{-\delta_0}$, which implies that the $Q_\nu$ operator is rapidly decreasing outside a $\la^{-\delta_0}$ neighborhood of the diagonal. The estimate
\eqref{globalqnu} is analogous to \eqref{7} where $T=c_0\log\la$ there, while here in the torus case,  we can take $T$ to be almost as large as $\la$. 

If  $\Psi=\rho^2$, then \eqref{globalqnu} is equivalent to the estimate
\begin{equation}\label{tnn.3}
\bigl\|\, Q_\nu \Psi(T(\la-P))Q^*_\nu\, \bigr\|_{L^{q_c'}(M)\to L^{q_c}(M)} \lesssim \la^{2/q_c} \, T^{-2/q_c},
\end{equation}
By \eqref{tnn.2}, this would be a consequence of 
\begin{equation}\label{tnn.5}
\bigl\| \, Q_\nu \Psi(T(\la-P))\, \bigr\|_{L^{q_c'}(M)\to L^{q_c}(M)}\lesssim \la^{2/q_c} \, T^{-2/q_c}.
\end{equation}
If we repeat the arguments in \eqref{n.5}--\eqref{n.10},
in order to prove \eqref{tnn.5}, it suffices to show that
\begin{equation}\label{tnn.6}
\|Q_\nu \tilde G_{\la,\mu}\|_{L^{q'_c}(M)\to L^{q_c}(M)}\le C\la^{2/q_c} \, T^{-1} \, \mu^{\frac2{n+2}},
\end{equation}
where for $\mu=2^j\ge1$
\begin{equation}\label{tnn.7}
\tilde G_{\la,\mu} =\frac1{\pi T}\int e^{i\la t} \cos(t\sqrt{-\Delta_g}) \, \beta(|t|/\mu) \, \Hat \Psi(t/T) \, dt,
\end{equation}
since, after adding up the dyadic estimates for $\mu=2^j\le T$, this gives us the right side of \eqref{tnn.5}. And as before, the other operators involving $L_\la$ and the difference $G_\la-\tilde G_\la$ satisfy better estimates.

As in \eqref{n.11}, by using \eqref{tnn.2}, it is not hard to see that 
\begin{equation}\label{tnn.8}
\|Q_\nu \tilde G_\la\|_{L^{2}(M)\to L^{2}(M)}=O(\mu/T).
\end{equation}
Thus, by interpolation, \eqref{tnn.6} would be a consequence of 
\begin{equation}\label{tnn.9}
\|Q_\nu \tilde G_\la\|_{L^{1}(M)\to L^{\infty}(M)}\lesssim T^{-1}\la^{\frac{n-1}2} \mu^{1-\frac{n-1}2}.
\end{equation}
As before, we prove \eqref{tnn.9} by lifting the calculations up to the universal cover, where in our current case the universal cover is just $\R^n$ with the standard metric. More explicitly,  we shall use the fact that 
\begin{equation}
    \bigl(\cos t\sqrt{-\Delta_{\T^n}}\bigr)(x,y)
=\sum_{j\in \mathbb{Z}^n}\bigl(\cos t\sqrt{-\Delta_{\R^n}}\bigr)( x-(y+ j_\ell)),
\end{equation}
if $j_\ell=(\ell_1j_1, \ell_2j_2,\dots,\ell_nj_n)$ for $j=(j_1, j_2,\dots,j_n)\in \mathbb{Z}^n$. This also follows from the classical Poisson summation formula. Here we are abusing notation a bit by viewing $x\in \T^n$ as a point in  $\R^n$, since the torus can always be identified with the cube $Q=[-\ell_1/2,\ell_1/2]\times\dots\times [-\ell_n/2,\ell_n/2] \in \R^n$, which is a fundamental domain for the covering.
As a result, 
if $K_{\la,\mu}(x,y)$ denotes the kernel of $\tilde G_{\la,\mu}$ then we have the formula
\begin{equation}\label{tnn.13}
K_{\la,\mu}(x,y)=\sum_{j\in \mathbb{Z}^n}\tilde K_{\la,\mu}( x, (y+j_\ell)),
\end{equation}
if 
\begin{equation}\label{tnn.14}
\begin{aligned}
\tilde K_{\la,\mu}( x, y)&=\frac1{\pi T} \int e^{i\la t} \, \bigl(\cos t\sqrt{-\Delta_{\R^n}}\bigr)( x, y)
\, \beta(|t|/\mu) \, \Hat \Psi(t/T) \, dt.  \\
&=\frac1{\pi T}\cdot(2\pi)^{-n} \int \int_{\R^n}e^{i\la t}e^{i\langle x-y,\xi\rangle} \, \bigl(\cos t|\xi|\bigr)
\, \beta(|t|/\mu) \, \Hat \Psi(t/T) \, dtd\xi. 
\end{aligned}
\end{equation}
Note that by finite speed of propagation, the above kernel vanishes
if $|x-y|\ge 2\mu$, since $\beta(|t|/\mu)=0$ for $|t|\ge 2\mu$.  Also, since this function also vanishes for $|t|\le \mu/2$, by integrating by parts, it is straightforward to check that $\tilde K_{\la,\mu}(x, y)=O(\la^{-N})$ for every $N$ if
$| x- y|\le \mu/4$. Thus, we may assume that $|x-y|\approx \mu$. In this case, by using a stationary phase argument, we have  
\begin{multline}\label{tnn.15}
    \tilde K_{\la,\mu}( x, y)=\sum_{\pm} T^{-1}\la^{\frac{n-1}2}\mu^{-\frac{n-1}{2}} e^{\pm i\la|x-y|} a_\pm(\la,|x-y|)  +O(\la^{-N}) \\ \text{if}\,\,|x-y|\approx \mu \ge 1,
\end{multline}
where for $j\ge 0$, $
    |\partial_r^j a_\pm(\la,r) |\le C_j r^{-j}, \,\,r\ge \la^{-1}.
$

Since for fixed $x,y$ the number of choices of $j_\ell$ such that $|x-(y+j_\ell)|\approx \mu$ is $O(\mu^n)$. By summing up all possible choices of $j_\ell$, \eqref{tnn.15} also implies that 
\begin{equation}\label{tnn.16}
   |K_{\la,\mu}(x,y)|\lesssim T^{-1}\la^{\frac{n-1}2}\mu^{\frac{n+1}{2}},
\end{equation}
which is equivalent to $\|\tilde G_{\la,\mu}\|_{L^1\rightarrow L^\infty}=O(T^{-1}\la^{\frac{n-1}2}\mu^{\frac{n+1}{2}})$.

Note that the above estimate is not sufficient for proving \eqref{tnn.9}.  To prove \eqref{tnn.9}, as in the negative curvature case, we need to use the fact that \eqref{tnn.9} includes the microlocal cutoff which means that the kernel of 
$Q_\nu \tilde G_{\la,\mu}$
\begin{equation}\label{tnn.17}
\begin{aligned}
\sum_{j\in \mathbb{Z}^n}&\frac1{\pi T} \int e^{i\la t} \, \bigl(Q_\nu\cos t\sqrt{-\Delta_{\R^n}}\bigr)( x, y+j_\ell)
\, \beta(|t|/\mu) \, \Hat \Psi(t/T) \, dt \\
=\sum_{j\in \mathbb{Z}^n}& K_{\la,\mu.\nu}( x, (y+j_\ell))
\end{aligned}
\end{equation}
only involves $O(\mu)$ non-trivial terms for $\mu\gg 1$. In the above $Q_\nu$ denotes the pullback of the operator $Q_\nu$ to  the fundamental domain $Q$, which has the same Schwartz kernel as in \eqref{qnu}.

More explicitly, for fixed $x,y$ and $\mu$, if we define
\begin{equation}\label{jellcondition}
 D_{main}= \Big\{ j\in\mathbb{Z}^n:  \Big|\pm\frac{x-(y+j_\ell)}{|x-(y+j_\ell)|}-\nu\Big|\le C \mu^{-1},\,\,\, |x-(y+j_\ell)|\approx \mu\Big\},
\end{equation}
and 
\begin{equation}\label{jellcondition1}
 D_{error}=\{ j\in\mathbb{Z}^n : \Big|\pm\frac{x-(y+j_\ell)}{|x-(y+j_\ell)|}-\nu\Big|\ge C \mu^{-1},\,\,\, |x-(y+j_\ell)|\approx \mu\},
\end{equation}
for some large enough constant $C$. Then the contributions from the terms  $j\in D_{error} $ in \eqref{tnn.17} is negligible.
To see this, note that by \eqref{qnu} and  \eqref{tnn.15}, modulo $O(\la^{-N})$ errors, the kernel of $ K_{\la,\mu.\nu}( x, y)$ is 
\begin{equation}\label{qnugla}
\begin{aligned}
 C_{T,\la,\mu}\iint e^{i\la \langle x-z,\xi\rangle\pm i\la|z-y|}a_\pm(\la,|z-y|) \psi(x)\tilde\psi(z) \, \beta_\nu\big(\xi/|\xi|\bigr) \, \tilde\beta(|\xi|)d\xi dz,
   \end{aligned}
\end{equation}
where $C_{T,\la,\mu}=(2\pi)^{-n}\la^{\frac{3n-1}2}T^{-1}\mu^{-\frac{n-1}{2}}$ and 
$\tilde\psi \in C^\infty_0(\Omega)$ equals
one in a neighborhood of the $x$-support of $\psi$. Here we introduced the extra the cut-off function $\tilde\psi(z)$, which is allowed since as mentioned before, the $Q_\nu$ operator is rapidly decreasing outside a $\la^{-\delta_0}$ neighborhood of the diagonal. And since for fixed $y$, as in \eqref{tnn.15}, the integral is taken over the region $|z-y|\approx u \gg 1$, we can also assume that $|x-y|\approx u$.

Note that fixed $x$ and $y$ with $|x-y|\approx \mu$, if 
\begin{equation}\label{xycondition}
    \Big|\pm\frac{x-y}{|x-y|}-\nu\Big|\ge C \mu^{-1},
\end{equation}
for some large enough constant $C$, then it is not hard to check that for all $z$ in the support of $\tilde \psi$ and $\xi$ in the support of $\beta_\nu$ 
\begin{equation}\label{zxicondition}
    \Big|\pm\frac{z-y}{|z-y|}-\xi\Big|\ge C(2\mu)^{-1}.
\end{equation}
Thus, integration by parts in $z$ in \eqref{qnugla} yields 
\begin{equation}\label{qnuglatri}
    | K_{\la,\mu.\nu}( x, y)|\lesssim_N (\la\mu^{-1})^{-N} \la^{\frac{3n-1}2}T^{-1}\mu^{-\frac{n-1}{2}},
\end{equation}
which implies that 
\begin{equation}\label{qnuglatri1}
  \sum_{j\in D_{error}}   | K_{\la,\mu.\nu}( x, y+j_\ell)|\lesssim_N (\la\mu^{-1})^{-N} \la^{\frac{3n-1}2}T^{-1}\mu^{\frac{n+1}{2}}.
\end{equation}
Since we are assuming $\mu\le T\le \la^{1-\delta_0}$, \eqref{qnuglatri1} is better than desired when $N$ is large enough. 

On the other hand, it is straightforward to check that $\# D_{main}=O(\mu)$, and for each fixed $j\in D_{main}$, 
\begin{equation}\label{tnn.18}
\begin{aligned}
| K_{\la,\mu.\nu}( x, y+j_\ell)|&=\Big|\frac1{\pi T} \int e^{i\la t} \, \bigl(Q_\nu\cos t\sqrt{-\Delta_{\R^n}}\bigr)( x, y+j_\ell)
\, \beta(|t|/\mu) \, \Hat \Psi(t/T) \, dt \Big|\\
&= \Big|\int Q_\nu (x,z) \tilde K_{\la,\mu}( z, y+j_\ell) dz\Big|\\
&\le \sup_x \int|Q_\nu(x,z)|\, dz \cdot \sup_{z, y}|\tilde K_{\la,\mu}( z, y)|\\
&=O(T^{-1}\la^{\frac{n-1}2}\mu^{-\frac{n-1}{2}}),
\end{aligned}
\end{equation}
where used \eqref{tnn.1} and \eqref{tnn.15} in the last line. This implies that 
\begin{equation}\label{qnuglatri2}
  \sum_{j\in D_{main}}   | K_{\la,\mu.\nu}( x, y+j_\ell)|\lesssim \la^{\frac{n-1}2}T^{-1}\mu^{1-\frac{n-1}{2}}.
\end{equation}
Thus the proof of \eqref{tnn.9} is complete.

Recall that the standard orthonormal basis for Laplacian on the rectangular torus $\T^n$ is 
$$ \{e^{2\pi i(\frac{j_1}{\ell_1}x_1+\dots+\frac{j_n}{\ell_n}x_n)}(\ell_1\cdot\dots\ell_n)^{\frac12},\,\,\,j\in\mathbb{Z}^n\}.
$$
For later use, let us also define
\begin{equation}\label{qnu1}
    \tilde{Q}_\nu f=\sum_{j\in\mathbb{Z}^n} \eta_\nu( j_{\ell'}) a_j e^{2\pi i \langle j_{\ell'},\cdot x\rangle},\,\,\,\text{if} \,\,\,  f=\sum_{j\in\mathbb{Z}^n}  a_j e^{2\pi i \langle  j_{\ell'},\cdot x\rangle},
\end{equation}
where  $ j_{\ell'}=(\frac{j_1}{\ell_1},\dots,\frac{j_n}{\ell_n})
$ for $j=(j_1,\dots,j_n)\in \mathbb{Z}^n$ and $\eta_\nu \in C_0^\infty(\R^n)$  equals one in a neighborhood of the support of $\beta_\nu$ and $$\supp (\eta_\nu)=\{\xi\in\R^n:\big|\frac{\xi}{|\xi|}-\nu\big|\le 4T^{-1}\},
$$

It is obvious that $\tilde{Q}_\nu$ commute with the operators $\sigma_\la$ and $\rho_\la$, and 
\begin{equation}\label{qnu2}\sum_{\nu}\|\tilde{Q}_\nu f\|_2^2\lesssim \|f\|_2^2,
\end{equation}
if $\nu$ is chosen from a $T^{-1}$ separated set on ${S}^{n-1}$. 

On the other hand, we also have 
\begin{equation}\label{qqnu}
\|Q_\nu f- Q_\nu\tilde{Q}_\nu f\|_{L^p(\T^n)}\le \la^{-N} \|f\|_2, \,\,\, \forall\, p\ge 2, \,\,\, 1\le T\le \la^{1-\delta_0}.
\end{equation}
To see this, since $Q_\nu$ is rapidly decreasing outside a $\la^{-\delta_0}$ neighborhood of the diagonal, it suffices to show \eqref{qqnu} with $Q_\nu$ replaced by 
\begin{equation}\label{qnu3}
    Q_\nu(x,y)=\frac{\la^n}{(2\pi)^n}\int e^{i\la \langle x-y, \xi\rangle} \psi(x)\tilde\psi(y) \, \beta_\nu\big(\xi/|\xi|\bigr) \, \tilde\beta(|\xi|)d\xi,
\end{equation}
where $\tilde\psi \in C^\infty_0(\Omega)$ equals
one in a neighborhood of the $x$-support of $\psi$. Now, if $f=\sum_{j\in\mathbb{Z}^n}  a_j e^{2\pi i \langle j_{\ell'},\cdot x\rangle},$ 
\begin{multline}
    Q_\nu f- Q_\nu\tilde{Q}_\nu f\\=\frac{\la^n}{(2\pi)^n}\sum_{j\in \mathbb Z^n}\int e^{i\la \langle x-y, \xi\rangle+2\pi i \langle j_{\ell'}, y\rangle}(1-\eta_\nu(j_\ell))\psi(x)\tilde\psi(y) \, \beta_\nu\big(\xi/|\xi|\bigr) \, \tilde\beta(|\xi|)d\xi dy,
\end{multline}
integrating by parts in $y$ and using Sobolev give us \eqref{qqnu}.

We have collect all the necessary properties we need for $Q_\nu$. Now
write 
$$(\tilde \rho_\la h)^{2}= \, 
\diag(h)+ \far(h) ,
$$
where $\far(h)$ is defined as in \eqref{ndsum}, and
\begin{multline}\label{diagetorus}
\diag(h)=\sum_{(\nu',\tilde \nu')\in
\Xi_J}
\bigl(\tilde \sigma_\la Q_\nu h\bigr) \cdot
\bigl(\tilde \sigma_\la Q_{\tilde \nu}h\bigr), 
\\
 \text{if } \, \, 
T^{-1}\in (2^{J-4}, 2^{J-3}], \, \, \text{and } \quad h=\rho_\la f.
\end{multline}
Here as before, $v=(v', v_n)$, and the  definition of $\Xi_J$ is the same as the one in \eqref{org},  which include diagonal pairs where
$\nu'=\tilde \nu'$, and all $(\nu',\tilde \nu')\in
\Xi_J$ satisfy $|\nu'-\tilde \nu'|\lesssim T^{-1}$.
Thus, for each fixed $\nu'$, we have
$\#\{\tilde \nu': \, (\nu', \tilde \nu')\in
\Xi_J\}=O(1)$.

For $\far(h)$, we have
the favorable bilinear estimates
\begin{equation}\label{fartorus}
\int |\far(h)|^{q/2} \, dx
 \le C
\la \, 
\bigl(\la T^{-1}\bigr)^{\frac{n-1}2(q-q_c)}
\|h\|_{L^2(M)}^q, \, \, 
\text{if } \, \, q\in (\tfrac{2(n+2)}n, q_c).
\end{equation}
This is analogous to \eqref{far}, where we replaced $\la^{7/8}$ in \eqref{far} by $\la T^{-1}$ here. And as is pointed out below \eqref{far}, \eqref{fartorus} essentially follows form 
(4.7) in \cite{SBLog} up to rapidly decaying terms.

If we combine \eqref{fartorus} with the fact that $|\tilde \rho_\la f(x)|\le C(T\la)^{\frac{n-1}4}$
 on $A_-$, we have
 \begin{equation}\label{iitorus}
     \begin{aligned}
 \int_{A_-}|\tilde \sigma_\la h|^{\frac{q_c-q}2}| \, \far(h)|^{q/2} &\lesssim      \|\tilde \sigma_\la h\|_{L^\infty(A_-)}^{q_c-q} \,  \bigl(\la T^{-1}\bigr)^{\frac{n-1}2(q-q_c)} \cdot \la \\
&\le  \, \big(\la T\big)^{\frac{n-1}4(q_c-q)} \cdot\bigl(\la T^{-1}\bigr)^{\frac{n-1}2(q-q_c)} \cdot \la
\\
&= \bigl(\la^{-\frac{n-1}{4}}T^{\frac{3(n-1)}{4}}\bigr)^{(q_c-q)}\cdot \la.
     \end{aligned}
 \end{equation}
Thus, if one repeat the arguments in \eqref{3.2}--\eqref{bilinear 2} for the $Q_\nu$ operator defined in this section, then for dimensions $n\ge3$, we have
\begin{equation}\label{4.10}
\|\tilde\sigma_\la h\|_{L^{q_c}(A_-)}
\le C\la^{\frac1{q_c}} \mu_n(T,\la) \|h\|_{L^2(\T^n)}
+C\Bigl(\sum_\nu \|\tilde\sigma_\la \qn h\|_{L^{q_c}(\T^n)}^{q_c}\Bigr)^{1/q_c}, \, \, 
h=\rho_\la f.
\end{equation}
And for $n=2$, where $q_c=6$, this is replaced by
\begin{equation}\label{4.10'}
\|\tilde\sigma_\la h\|_{L^{6}(A_-)}
\le C\la^{\frac1{6}} \mu_2(T,\la) \|h\|_{L^2(\T^2)}
+C\Bigl(\sum_\nu \|\tilde\sigma_\la \qn h\|_{L^{6}(\T^2)}^{3}\Bigr)^{1/3}, \quad  h=\rho_\la f.
\end{equation}
Here $\mu_n(T,\la)=(\la^{-\frac{n-1}{4}}T^{\frac{3(n-1)}{4}})^{1-\frac{q}{q_c}}$, for any $\tfrac{2(n+2)}{n}< q< q_c$. If we require $\mu_n(T,\la)\le T^{-\frac{1}{q_c}}$ and let $q\downarrow \tfrac{2(n+2)}{n}$, we get the condition $T\le \la^{\frac{1}{n+3}-\epsilon}$ for arbitrary small $\e$.

\begin{proof}[Proof of Proposition~\ref{prop4.3}]

With $h=\rho_\la f$ in \eqref{4.10}, we have for $n\ge 3$, 
\begin{equation}\label{4.17}
\|\tilde \rho_\la f||_{L^{q_c}(A_-)} \le C\la^{\frac1{q_c}} T^{-\frac{1}{q_c}}\|f\|_{L^2(\T^n)}
+C\Bigl(\sum_\nu \|\tilde\sigma_\la \qn \rho_\la f\|_{L^{q_c}(\T^n)}^{q_c}\Bigr)^{1/q_c},
\end{equation}
if $1\le T\le\la^{\frac{1}{n+3}-\epsilon}$.
Note that the universal (local) spectral projection bounds from \cite{sogge88} yield
$$\|\tilde\sigma_\la\|_{L^2\to L^{q_c}}=O(\la^{\frac1{q_c}}).$$
Using this and \eqref{qqnu}, we have 
\begin{equation}\label{4.18}
\begin{aligned}
\sum_\nu & \|\tilde\sigma_\la \qn \rho_\la f-\tilde\sigma_\la \qn \rho_\la \tilde{Q}_\nu f\|_{L^{q_c}(\T^n)}^{2} \\
&=\sum_\nu  \|\tilde\sigma_\la (\qn - \qn \tilde{Q}_\nu) \rho_\la  f\|_{L^{q_c}(\T^n)}^{2}  \\
&\lesssim \la^{\frac2{q_c}} \sum_\nu \|(\qn- \qn \tilde{Q}_\nu) \rho_\la f\|_2^2 \\
&\lesssim \la^{\frac2{q_c}-N}\|\rho_\la f\|_2^2
\lesssim \la^{\frac2{q_c}-N}\| f\|_2^2.
\end{aligned}
\end{equation}
Thus, it suffices to show that 
\begin{equation}\label{4.21}
    \Bigl(\sum_\nu \|\tilde\sigma_\la \qn \rho_\la  \tilde{Q}_\nu f\|_{L^{q_c}(\T^n)}^{q_c}\Bigr)^{1/q_c} \le C\la^{\frac1{q_c}} T^{-\frac{1}{q_c}} \|f\|_2.
\end{equation}
Note that by \eqref{commutetori} and the fact that $\|\rho_\la\|_{L^2\to L^2}=O(1)$, we have
\begin{equation}\label{star1'}
\|\tilde \sigma_\la Q_\nu \rho_\la \tilde{Q}_\nu f\|_{L^{q_c}(M)}\lesssim
\|Q_\nu\tilde \sigma_\la \rho_\la \tilde{Q}_\nu f\|_{L^{q_c}(M)}+\la^{\frac1{q_c}-\frac12}T^{2}\|\tilde{Q}_\nu f\|_{L^2(M)}.
\end{equation}
As before, write $Q_\nu \tilde \sigma_\la =Q_\nu B\sigma_\la =
BQ_\nu\sigma_\la +[Q_\nu,B]\sigma_\la$, where in the current case the commutator $[Q_\nu,B]$ satisfies $\|[Q_\nu,B]\|_{L^{q_c}\to L^{q_c}}=O(\la^{-1}T^2)$.  So, by \eqref{blah} and \eqref{i.b3}
\begin{align}\label{star2'}
\|Q_\nu \tilde \sigma_\la \rho_\la \tilde{Q}_\nu f\|_{L^{q_c}(M)}
&\lesssim \|BQ_\nu \sigma_\la \rho_\la \tilde{Q}_\nu f\|_{L^{q_c}(M)}
+\la^{\frac1{q_c}-1}T^{2}\|\tilde{Q}_\nu f\|_{L^2(M)}
\\
&\lesssim \|Q_\nu \sigma_\la \rho_\la \tilde{Q}_\nu f\|_{L^{q_c}(M)}
+\la^{\frac1{q_c}-1}T^{2}\|\tilde{Q}_\nu f\|_{L^2(M)}.
\notag
\end{align}

If we combine \eqref{star1'} and \eqref{star2'}, by 
 \eqref{tilde}, \eqref{tnn.2} along with the Kakeya-Nikodym bounds \eqref{globalqnu}, we have
\begin{equation}\label{4.22}
    \begin{aligned}
    \|&\tilde\sigma_\la \qn \rho_\la  \tilde{Q}_\nu f\|_{L^{q_c}(\T^n)} \\
&\le \|\qn \sigma_\la \rho_\la  \tilde{Q}_\nu f\|_{L^{q_c}(\T^n)}+C
\la^{\frac1{q_c}-\frac12}T^{2}\| \tilde{Q}_\nu f\|_2
\\
&\lesssim \|\qn\rho_\la  \tilde{Q}_\nu f\|_{L^{q_c}(\T^n)} + \|Q_\nu\circ (I-\sigma_\la)\rho_\la  \tilde{Q}_\nu f\|_{L^{q_c}(\T^n)}
+C
\la^{\frac1{q_c}-\frac12}T^{2}\| \tilde{Q}_\nu f\|_2
\\
&\lesssim \|Q_\nu \rho_\la  \tilde{Q}_\nu f\|_{L^{q_c}(\T^n)} +\la^{\frac1{q_c}}T^{-1}\|  \tilde{Q}_\nu f\|_2+\la^{\frac1{q_c}} \,T^{-\frac{1}{q_c}} \, \| \tilde{Q}_\nu f\|_2
\\
&\lesssim \la^{\frac1{q_c}} \,T^{-\frac{1}{q_c}} \, \| \tilde{Q}_\nu f\|_2,
    \end{aligned}
\end{equation}
Here 
 we used the fact that $\la^{\frac1{q_c}-\frac12}T^{2}\le \la^{\frac1{q_c}}T^{-\frac{1}{q_c}}$ for $1\le T\le\la^{\frac{1}{n+3}-\epsilon}$. 

If we combine \eqref{qnu2} and \eqref{4.22}, we have 
\begin{equation}\label{4.23}
    \Bigl(\sum_\nu \|\tilde\sigma_\la \qn \rho_\la  \tilde{Q}_\nu f\|_{L^{q_c}(\T^n)}^{q_c}\Bigr)^{1/q_c} \le C\la^{\frac1{q_c}} T^{-\frac{1}{q_c}}  (\sum_\nu\| \tilde{Q}_\nu f\|^2_2)^{\frac12}\le C\la^{\frac1{q_c}} T^{-\frac{1}{q_c}} \|f\|_2,
\end{equation}
which implies \eqref{4.21}. The proof for the case $n=2$ is similar. If we combine Proposition~\ref{prop4.1} and Proposition~\ref{prop4.3}, we get \eqref{4.2}.
\end{proof}

\noindent {\bf Remark:}  In the next section, we shall show that  the results in Theorem~\ref{thm3} are optimal in terms of how the 
bounds depend on $T$.  
It would be interesting to see whether the optimal results in Theorem~\ref{thm3} are valid for
$T\lesssim \la^{1/2-}$.

\newsection{Geodesic concentration of quasimodes and lower bounds of $L^q(M)$-norms}\label{g}

In this section we shall show that the bounds in Theorem~\ref{thm3} are sharp in terms of their dependence on $T$ even though we expect the bounds to hold for a larger range of $T$, at least $T\in [1,\la^{1/2-}]$.  Bourgain and Demeter~\cite{BourgainDemeterDecouple} and Hickman~\cite{Hickman} (see also Germain and Myerson~\cite{Germain}) obtained near optimal bounds when $T$ is larger than a fixed power of $\la$; however, since their arguments used decoupling they involved 
$\la^\e$-losses for arbitrary $\e>0$.

We shall consider here 
relationships between the problem of obtaining lower bounds for the $L^q(M)$ norms of quasimodes $\Psi_\la$ of frequency $\la\gg 1$ on 
$M$  and the problem of establishing 
their
nontrivial $L^2$-mass concentration near a fixed
closed geodesic $\gamma\in M$.  Here, we are assuming that the spectrum associated with $P=\sqrt{-\Delta_g}$ of
the quasimodes $\Psi_\la$ belongs to intervals of the form $[\la-\e(\la),\la+\e(\la)]$, where, as $\la$ goes to infinity through a sequence
 $\la_{j} \to +\infty$ we have
$\e(\la)\searrow 0$.  We shall also assume that these quasimodes are $L^2$-normalized, i.e.,
\begin{equation}\label{qmnorm}
\|\Psi_\la\|_{L^2(M)}=1.
\end{equation}

Additionally, to get lower bounds for $L^q$-norms with $q>2$, if 
\begin{equation}\label{tube}{\mathcal T}_{\la,N}={\mathcal T}_{N(\la)\cdot \la^{-1/2}}(\gamma)=\{y\in M: \, d_g(y,\gamma)\le N(\la)\cdot \la^{-1/2}\},
\, \, \, N=N(\la),
\end{equation}
is a tube of width
$N(\la)\cdot \la^{-1/2}$ about our closed geodesic $\gamma$, we shall assume that we have the following uniform lower bounds for the
$L^2({\mathcal T}_{\la,N})$ mass of our quasimodes
\begin{equation}\label{tubemass}
\|\Psi_\la\|_{L^2({\mathcal T}_{\la,N})}\ge \delta_0>0.
\end{equation}
Typically, we shall have $N(\la)\to +\infty$ as $\la\to +\infty$, and \eqref{tubemass} with $N(\la)\equiv1$ corresponds to the concentration of
Gaussian beam quasimodes, which  is expected to be the tightest possible concentration about geodesics.
It is also not natural to consider tubes which are wider than the injectivity radius of 
$M$.   So, by \eqref{tube}, it is natural to assume, as we shall,
 that
\begin{equation}\label{tubewidth}
1\le N(\la) =O( \la^{1/2-}).
\end{equation}

Note that if we assume \eqref{tubemass}, then by H\"older's inequality we have for $q>2$
$$\delta_0\le \|\Psi_\la\|_{L^2({\mathcal T}_{\la,N})}\le |{\mathcal T}_{\la,N}|^{\frac12-\frac1q}\, \|\Psi_\la\|_{L^q(M)}.$$
Since the Riemannian volume of the tubes satisfies $|{\mathcal T}_{\la,N}|\approx \bigl(N(\la) \cdot \la^{-1/2}\bigr)^{(n-1)}$,  this gives the lower bounds
\begin{equation}\label{lower}
\bigl(N(\la)\bigr)^{-(n-1)(\frac12-\frac1q)} \, \la^{\frac{n-1}2 (\frac12-\frac1q)} \lesssim  \|\Psi_\la\|_{L^q(M)}
\end{equation}
for the lower bounds of our quasimodes satisfying \eqref{tubemass}.  Tighter concentration of $L^2$-mass near the geodesic (i.e. smaller $N(\la)$) implies stronger lower bounds.

Note that for the standard round sphere, $S^n$, the $L^2$-normalized highest weight spherical harmonics $Q_\la$ are {\em eigenfunctions}
satisfying \eqref{tubemass} with $N(\la)\equiv 1$, and the lower bounds in \eqref{lower} with $\Psi_\la=Q_\la$ are sharp since
$\|Q_\la\|_{L^q(S^n)}\approx \la^{\frac{n-1}2(\frac12-\frac1q)}$ if $q>2$, and this fact shows that the universal bounds in \cite{sogge88}
are best possible for $2<q\le q_c$.  (See \cite{sogge86}.) 

Let us also see here that we can also obtain  natural lower bounds for $L^q$ estimates for this range of $q$ if $M$ is a product manifold
with $S^1$ as one of its factors.  So let us assume for now that
\begin{equation}\label{product}
M=S^1\times X^{n-1}
\end{equation}
is a Cartesian product of $S^1$ with an $(n-1)$-dimensional compact manifold $X^{n-1}$ equipped with the product metric.
If $x_0\in X^{n-1}$ is fixed, let us consider the geodesic $\gamma=S^1 \times \{x_0\}$ in $M$.  Also, for $\la=1,2,3,\dots$ consider the
function on $S^1\times X^{n-1}$ defined by
\begin{equation}\label{qm}
\Psi_\la(\theta,x)=(\la^{1/2}/N)^{-(n-1)/2}e^{i\la \theta}  \,\beta\bigl(P_X/(\la^{1/2}/N)\bigr)(x_0,x), \quad N=N(\la),
\end{equation}
where $0\le \beta\in C^\infty_0((1/2,2))$ is a Littlewood-Paley bump function and $P_X=\sqrt{-\Delta_{X^{n-1}}}$ is the first order  operator on $X^{n-1}$ coming from its metric Laplacian.
Here $\beta(P_X/\mu)(x,y)$ is the kernel of the operator $\beta(P_X/\mu)$ on $X=X^{n-1}$ defined by the spectral theorem.

Let us now prove the following simple result on such product manifolds.

\begin{proposition}\label{productmfld}  Let $M=S^1\times X^{n-1}$ be as in \eqref{product} with $n\ge2$.
Then if $\e(\la)\searrow 0$ and \eqref{tubewidth} is valid with $N(\la)=(\e(\la))^{-1/2}$,  there is a geodesic $\gamma\in M$ so that
we have \eqref{tubemass}.
Consequently, we have
\begin{multline}\label{counterexample}
\|\chi_{[\la-\e(\la),\la+\e(\la)]}\|_{L^2(S^1\times X^{n-1})\to L^q(S^1\times X^{n-1})}  \ge c_0 \, \bigl(\e(\la)\cdot \la)^{\frac{n-1}2(\frac12-\frac1q)},
\\ q\in (2,q_c], \quad q_c=\tfrac{2(n+1)}{n-1}.
\end{multline}
\end{proposition}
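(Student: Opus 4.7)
The plan is to verify directly that $\Psi_\la$ defined in \eqref{qm} is an $L^2$-normalized quasimode with $P$-spectrum in $[\la-\e(\la),\la+\e(\la)]$ satisfying \eqref{tubemass} for the geodesic $\gamma=S^1\times\{x_0\}$, and then to deduce \eqref{counterexample} by computing $\|\Psi_\la\|_{L^q(M)}$ directly and using $\Psi_\la/\|\Psi_\la\|_{L^2(M)}$ as a test function.

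The main input is the standard pointwise description of the kernel of the smooth spectral projector $\beta(P_X/\mu)$ on the $(n-1)$-dimensional factor $X$. Writing $\mu=\la^{1/2}/N(\la)=(\la\e(\la))^{1/2}$ and $K_\mu(x_0,x)=\beta(P_X/\mu)(x_0,x)$, one has the uniform bound $|K_\mu(x_0,x)|\le C_N\mu^{n-1}(1+\mu\, d_X(x_0,x))^{-N}$ for every $N$, together with the matching lower bound $|K_\mu(x_0,x)|\approx \mu^{n-1}$ for $d_X(x_0,x)\lesssim \mu^{-1}$ coming from the local Weyl law. These yield both $\|K_\mu(x_0,\cdot)\|_{L^2(X)}^2\approx \mu^{n-1}$ and $\|K_\mu(x_0,\cdot)\|_{L^q(X)}^q\approx \mu^{(n-1)(q-1)}$ for $q\ge 2$, with essentially all of the mass concentrated in the ball $B_X(x_0,\mu^{-1})$.

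From the product structure, the $P$-spectrum of $\Psi_\la$ consists of the values $\sqrt{\la^2+\mu_j^2}$ where $\mu_j$ runs through eigenvalues of $P_X$ in the support of $\beta(\cdot/\mu)$. Choosing $\beta\in C_0^\infty((1,\sqrt 2))$, a Taylor expansion gives $\sqrt{\la^2+\mu_j^2}-\la=\mu_j^2/(2\la)+O(\mu_j^4/\la^3)\le \e(\la)$ for $\la$ large, since $\mu_j^2\le 2\mu^2=2\la\e(\la)$. Hence $\Psi_\la$ lies in the range of $\chi_{[\la-\e(\la),\la+\e(\la)]}$. The prefactor $\mu^{-(n-1)/2}$ combined with the $L^2$ estimate for $K_\mu$ gives $\|\Psi_\la\|_{L^2(M)}\approx 1$, and because $N(\la)\la^{-1/2}=\mu^{-1}$ the tube $\mathcal T_{\la,N}$ coincides with $S^1\times B_X(x_0,\mu^{-1})$, where $K_\mu$ concentrates; this gives $\|\Psi_\la\|_{L^2(\mathcal T_{\la,N})}\ge \delta_0$ for some uniform $\delta_0>0$, which is the first assertion.

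For \eqref{counterexample}, the same kernel bounds yield
$$\|\Psi_\la\|_{L^q(M)}^q\approx \mu^{-q(n-1)/2}\cdot\mu^{(n-1)(q-1)}=\mu^{(n-1)(q/2-1)},$$
hence $\|\Psi_\la\|_{L^q(M)}\approx (\la\e(\la))^{\frac{n-1}{2}(\frac{1}{2}-\frac{1}{q})}$ for all $q>2$; taking $f=\Psi_\la/\|\Psi_\la\|_{L^2(M)}$ produces the lower bound in \eqref{counterexample}. The only delicate point is arranging the support of $\beta$ so that the resulting spectrum sits exactly inside $[\la-\e(\la),\la+\e(\la)]$, which is handled by the constant-chasing above; no microlocal or bilinear machinery is needed once the pointwise multiplier bound on $X$ is available.
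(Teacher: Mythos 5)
Your proposal is correct and follows essentially the same route as the paper: the same product quasimode $\Psi_\la$ from \eqref{qm}, the same Taylor-expansion argument for the spectral localization \eqref{a}, and the same pointwise description of $\beta(P_X/\mu)(x_0,\cdot)$ near $x_0$ (the paper invokes \cite[\S 4.3]{SFIOII} for exactly this) to get the concentration \eqref{tubemass}. The only cosmetic difference is in the last step, where you compute $\|\Psi_\la\|_{L^q(M)}$ directly from the kernel bounds instead of passing through H\"older's inequality on the tube as in \eqref{lower}; both rest on the same estimates and yield the identical lower bound.
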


\begin{remark}\label{toralremark}
Note that the universal bounds in \cite{sogge88} imply that the operators in \eqref{counterexample} map $L^2\to L^q$ with norm
$O\bigl(\la^{\frac{n-1}2(\frac12-\frac1q)}\bigr)$, and so \eqref{counterexample} says that for  manifolds as in \eqref{product} one can, at best, improve these bounds by
a factor of $\e(\la))^{\frac{n-1}2(\frac12-\frac1q)}$.  Consequently, 
since tori are product manifolds involving $S^1$ (perhaps scaled) as a factor,
the bounds in Theorem~\ref{thm3} are sharp for the
range $\e(\la)\in [\la^{-\frac1{n+3}+}, 1]$ there.  
\end{remark} 

As we pointed out before, Hickman~\cite{Hickman} showed that for tori 
if one includes additional arbitrary $\la^\e$ factors in the right side of \eqref{counterexample} then this
controls the bounds of the projection operators there.  Further results were obtained by Germain and Myerson~\cite{Germain}
as well as the lower bound \eqref{counterexample} for tori using a slightly different and more direct method.

\begin{proof}  We shall take $\gamma=S^1\times \{x_0\}$ as mentioned above.
We then notice that, if $\{\mu_k\}$ are the eigenvalues of $P_X$ and $\{e_k\}$ is the associated orthonormal basis, then 
\begin{align*}
\bigl\|\beta\bigl(P_X/(\la^{1/2}/N)\bigr)(x_0, \, \cdot \, )\|_{L^2(X^{n-1})}^2
&=\sum_k \beta^2(\mu_k/(\la^{1/2}/N))\, |e_k(x_0)|^2
\\
&\approx \# \{\mu_k: \, \mu_k\approx (\la^{1/2}/N)\} \approx (\la^{1/2}/N)^{n-1}. 
\end{align*}
Therefore, the functions in \eqref{qm} have $L^2(S^1\times X^{n-1})$ norms which are comparable to one.

Consequently, by \eqref{lower} with $N(\la)=(\e(\la))^{-1/2}$ we would have the lower-bound \eqref{counterexample} if we could show that
for large  $\la\in {\mathbb N}$
\begin{equation}\label{a}
\text{Spec }\Psi_\la \subset [\la -C_0\e(\la), \la + C_0\e(\la)],
\end{equation}
for some fixed constant, as well as
\begin{equation}\label{b}
\|\Psi_\la\|_{L^2(\{y\in S^1\times X^{n-1}: \, d_g(y,\gamma)\le
(\e(\la)\la)^{-1/2}\})} \ge \delta_0, \quad 
\gamma = S^1\times \{x_0\}, \, \, x_0\in X^{n-1},
\end{equation}
for some fixed $\delta_0>0$.  Here $\text{Spec }\Psi_\la$ of course denotes the $P$-spectrum of $\Psi_\la$.

To prove \eqref{a} we note that $\Psi_\la(\theta ,x)$ is a linear combination of eigenfunctions on $S^1\times X^{n-1}$ of the form
$e^{i\la\theta}e_k(x)$ where $\la\in {\mathbb N}$ is fixed and $\mu_k\approx \la^{1/2}/N(\la)=\sqrt{\e(\la)\la}$. Since
$$\sqrt{-\bigl(\tfrac{\partial^2}{\partial \theta^2}+\Delta_X\bigr)} \bigl(e^{i\la\theta}e_k(x)\bigr)
=\sqrt{\la^2+\mu_k^2} \,  \bigl( e^{i\la\theta}e_k(x)\bigr)$$
and
$$ \sqrt{\la^2+\mu_k^2} -\la =O(\e(\la)) \quad \text{if } \, \, \mu_k\le C\sqrt{\e(\la)\la},$$
we conclude that \eqref{a} is valid.  

To prove \eqref{b} it suffices to see that if $B(x_0,r)$ is the geodesic ball of radius $r\ll 1$ about $x_0$ in 
$X^{n-1}$ then we have the lower bound
\begin{equation}\label{mass}
\bigl(\e(\la)\la\bigr)^{-\frac{n-1}4} \, 
\bigl\| \beta\bigl(P_X/(\e(\la)\la)^{1/2}\bigr)(x_0,\, \cdot \, )\|_{L^2(B(x_0, \, (\e(\la)\la)^{-1/2}))} \ge \delta_0>0.
\end{equation}
This is standard calculation.  Since $\e(\la)\la\ge \la^\sigma$ for some $\sigma>0$ because of our assumption 
\eqref{tubewidth} one can argue as in 
\cite [\S 4.3]{SFIOII}
to see that for large $\la$ we have
$$ \beta\bigl(P_X/(\e(\la)\la)^{1/2}\bigr)(x_0,y ) \approx (\sqrt{\e(\la)\la})^{n-1}
\quad \text{if } \, \, \, y\in B(x_0, \, c_0 (\e(\la)\la)^{-1/2})$$
for some fixed $c_0>0$, which yields \eqref{mass} and
completes the proof.
\end{proof}


\noindent{\bf Concentration problems for
manifolds of negative curvature}

As we pointed out before the analog of \eqref{counterexample} with $\e(\la)\equiv 1$ is valid
on $S^n$ since there are $L^2$-normalized eigenfunctions on $S^n$ for which \eqref{tubemass} is valid with
$N(\la)\equiv 1$, which means they have a fixed fraction
of their $L^2$-mass in $\la^{-1/2}$ tubes about a geodesic.  The proof of Proposition~\ref{productmfld} showed that 
on tori one can find $\e(\la)$-quasimodes with fixed
$L^2$-mass in $(\e(\la))^{-1/2}\la^{-1/2}$ tubes around
a closed geodesic whenever $\e(\la)\in [\la^{-1+},1]$.
So we have natural concentration results for quasimodes
near geodesics in manifolds of positive and zero curvature.

We wonder if there are any analogous results for manifolds of {\em negative} curvature.  Recently there
has be somewhat related work by Brooks~\cite{BrooksQM},
Eswarathasan and Nonnenmacher~\cite{ENQM} and others
who showed that there is strong scarring of logarithmic quasimodes on compact
quotients of ${\mathbb H}^n$.  These say that, on such
manifolds, if $\gamma$ is a closed geodesic then there
is a sequence of $L^2$-normalized quasimodes $\Psi_\la$
satisfying \eqref{a} (or a related $L^2$-quasimode
condition) so that the quantum probability measures
$|\Psi_\la|^2 \, dVol_g$ exhibit strong scarring
along $\gamma$.  By this we mean that weak limits of
these measures must include a positive multiple of
the Dirac mass on $\gamma$, and so the $\Psi$ are 
tightly concentrated on this geodesic.

It would be interesting if one could find more
quantitative versions of these results that might
lead to nontrivial lower bounds for the $L^2\to L^{q_c}$
operator norms of the operators in \eqref{counterexample} with $\e(\la)=(\log\la)^{-1}$ (or perhaps larger).  
So it would be interesting to see if for certain
$N(\la)\nearrow$ we could find logarithmic quasimodes
$\Psi_\la$ (so $\e(\la)\approx (\log\la)^{-1}$)
on hyperbolic quotients for which we
have \eqref{tube}.   It is routine to see that on
{\em any} manifold one cannot have for $\e(\la)\searrow 0$ 
\begin{equation}\label{c}
\bigl\| \chi_{[\la-\e(\la),\la+\e(\la)]}\bigr\|_{L^2\to L^{q_c}} =o(\sqrt{\e(\la)}\la^{\frac1{q_c}}),
\end{equation}
since these bounds would imply that the operators map
$L^2\to L^\infty$ with norm $o(\sqrt{\e(\la)}\la^{\frac{n-1}2})$, and this would violate the 
Weyl counting formula.

Even though Hassell and Tacy~\cite{HassellTacy} showed
that there is $(\log\la)^{-1/2}$ improvements over
the universal bounds in \cite{sogge88} on the 
$L^2\to L^q$ norms of these operators for 
$q>q_c$ when $M$ has negative curvature and
$\e(\la)=(\log\la)^{-1}$, given
Proposition~\ref{productmfld} one might expect
such an improvement to not hold when $q$ is the critical
exponent $q=q_c$.  This would follow from showing
that there is a closed geodesic $\gamma$ and 
$L^2$-normalized logarithmic quasimodes as described
above so that one has the non-trivial
lower bounds \eqref{tubemass} with
$N(\la)=o((\log\la)^{q_c/4})=o((\log\la)^{\frac{n+1}{2(n-1)}})$.  So, for instance, when $n=2$, if one
could find $\Psi_\la$ as above with fixed
lower bounds of $L^2$-mass in 
$o((\log\la)^{3/2}\la^{-\frac12})$ neighborhoods
of a closed geodesic $\gamma$, then the 
$\sqrt{\e(\la)}=(\log\la)^{-1/2}$ improvement 
of Hassell and Tacy~\cite{HassellTacy} for $q>q_c$ 
could not hold for $q=q_c$.   

Also, it would be interesting to know whether our estimates in \eqref{116} for log-quasimodes on surfaces of negative curvature
are sharp.  If one could construct a sequence of such quasimodes for which $\|\Psi_\la\|_{L^2({\mathcal T})}\ge \delta_0$ for tubes ${\mathcal T}
={\mathcal T}_\la(\gamma)$   of width
$\log\la \cdot\la^{-1/2} $ about a periodic geodesic $\gamma$, then \eqref{116} would be saturated.  Note that these are much wider tubes
than the ones we used for $S^1\times X^{n-1}$, which is to be expected due to the much faster divergence of 
geodesics on manifolds of negative curvature.

As was kindly pointed out to us by Eswarathasan, due to
the role of the Ehrenfest time in the constructions,
it does not seem likely that the techniques in
\cite{BrooksQM} or \cite{ENQM} could be used to prove
this.  It would also be interesting to see whether
such a result might hold for log-log quasimodes, in which case one would replace the above $\log\la$ terms 
with $\log\log\la$.  So $\e(\la)=(\log\log\la)^{-1}$
and $N(\la)=o((\log\log\la)^{\frac{n+1}{2(n-1)}})$.
If we then had \eqref{tubemass} this would imply that the 
$\sqrt{\e(\la)}$ improvements for the 
$L^2\to L^q$ norms of the operators in \eqref{counterexample} of Hassell and Tacy for $q>q_c$
could not hold for $q=q_c$ with $\e(\la)=(\log\log\la)^{-1}$.

\bibliography{refs}
\bibliographystyle{abbrv}

%

\end{document}